\numberwithin{equation}{section}
\newtheorem{thm}[equation]{Theorem}
\newtheorem{prop}[equation]{Proposition}
\newtheorem{lem}[equation]{Lemma}
\newtheorem{cor}[equation]{Corollary}
\newtheorem{clm}[equation]{Claim}
\theoremstyle{definition}
\newtheorem{defn}[equation]{Definition}
\theoremstyle{remark}
\newtheorem{rem}[equation]{Remark}
\newtheorem{ex}[equation]{Example}
\newcommand{\N}{\ensuremath{\mathbb{N}}}
\newcommand{\R}{\ensuremath{\mathbb{R}}}
\newcommand{\tX}{{\tilde{X}}}
\newcommand{\tY}{{\tilde{Y}}}
\newcommand{\muw}{\mu_{W}}
\newcommand{\mux}{\mu_{X}}
\newcommand{\muy}{\mu_{Y}}
\newcommand{\muz}{\mu_{Z}}
\newcommand{\seqk}[1]{ \{#1\}_{k=1}^{\infty} }
\newcommand{\seql}[1]{ \{#1\}_{l=1}^{\infty} }
\newcommand{\seqn}[1]{ \{#1\}_{n=1}^{\infty} }
\DeclareMathOperator{\diam}{diam}
\DeclareMathOperator{\pt}{pt}
\newcommand{\niN}{{n\in\N}}
\newcommand{\nti}{{n\to\infty}}
\newcommand{\ep}{\varepsilon}
\renewcommand{\phi}{\varphi}
\newcommand{\cA}{{\mathcal{A}}}
\newcommand{\cD}{\mathcal{D}}
\newcommand{\cH}{\mathcal{H}}
\newcommand{\cK}{\mathcal{K}}
\newcommand{\cL}{\mathcal{L}}
\newcommand{\cM}{\mathcal{M}}
\newcommand{\cP}{\mathcal{P}}
\newcommand{\cQ}{\mathcal{Q}}
\newcommand{\cX}{\mathcal{X}}
\newcommand{\cY}{\mathcal{Y}}
\newcommand{\cZ}{\mathcal{Z}}
\newcommand{\haus}{d_{\mathrm{H}}}
\newcommand{\proh}{d_{\mathrm{P}}}
\newcommand{\GH}{d_{\mathrm{GH}}}
\DeclareMathOperator{\supp}{supp}
\title[Boundedness of sets of metric measure spaces in pyramids]{Boundedness of measured Gromov--Hausdorff precompact sets of metric measure spaces in pyramids}
\author[D. Kazukawa]{Daisuke Kazukawa}
\address{Faculty of Mathematics, Kyushu University, Fukuoka 819-0395, JAPAN}
\email{kazukawa@math.kyushu-u.ac.jp}
\author[T. Yokota]{Takumi Yokota}
\address{Mathematical Institute, Tohoku University, Sendai 980-8578, JAPAN}
\email{takumiy@tohoku.ac.jp}
\date{Octorber 1, 2022}
\subjclass[2020]{53C23, 51F30}
\keywords{mm-space, Box distance, Lipschitz order, Pyramid}
\begin{document}
\maketitle
\begin{abstract}
We prove that any measured Gromov--Hausdorff precompact set of metric measure spaces which is contained in a certain set, called a pyramid,
is bounded by some metric measure space with respect to the Lipschitz order inside the pyramid.
This is proved as a step towards a possible extension of the statement of Gromov, for which we gave a detailed proof in our previous work.
Several related results are also obtained.
\end{abstract}

\section{Introduction}

This is a sequel to our previous work \cite{KY},
where we studied \emph{metric measure spaces}, or \emph{mm-spaces} for short.
An mm-space is a triple $X=(X, d_X, \mux)$,
where $(X, d_X)$ is a complete separable metric space with a Borel probability measure $\mux$ on $X$.
The set $\cX$ of all (isomorphism classes of) mm-spaces has a topology induced by the distance function $\Box$, called the \emph{box distance},
and a partial order $\prec$, called the \emph{Lipschitz order}, both of which were introduced by Gromov~\cite{G}.
Our main reference is Shioya's monograph~\cite{S}.

The box distance could be thought of as a measured analog of the GH distance between compact metric spaces.
Here and hereafter, GH stands for Gromov--Hausdorff.
We refer to e.g.~Tuzhilin~\cite{T} for the GH distance.

In this paper, our interest is in certain subsets of $\cX$, called \emph{pyramids},
which were also introduced by Gromov~\cite{G}
as elements of a natural compactification of $\cX$ topologized by the so-called observable distance,
e.g.~\cite[Theorems~6.12 and 6.23]{S}.
Necessary definitions are recalled in Section~\ref{sec;pre}.
Our main result is the following.
\begin{thm}\label{thm;main}
If $\cP$ is a pyramid and $\cY\subset\cP$ is a subset such that the support $\supp\muy\subset Y$ of $\muy$ is compact for any $Y\in\cY$ and the set
$\{\supp\muy:Y\in\cY\}$
is GH-precompact,
then there exists a compact mm-space $X\in\cP$ with $Y\prec X$ for any $Y\in\cY$.
\end{thm}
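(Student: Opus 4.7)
The plan is to realize the desired $X$ as a box-limit of a carefully chosen $\prec$-increasing chain in $\cP$. First I would reduce to a countable subfamily. The GH-precompactness of $\{\supp\muy:Y\in\cY\}$ combined with compactness of each support yields uniform bounds on diameters and on $\ep$-covering numbers; a standard argument via correspondences and Prokhorov then shows that $\cY$ itself is box-precompact, hence box-separable. Pick a countable box-dense $\seqn{Y_n}\subset\cY$. Since the pyramid $\cP_X:=\{Z\in\cX:Z\prec X\}$ is box-closed, any compact $X\in\cP$ that dominates every $Y_n$ automatically dominates every $Y\in\cY$, reducing the problem to the countable family $\seqn{Y_n}$.

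Second, I would use directedness of $\cP$ to build an ascending chain of compact upper bounds of uniformly bounded diameter. Set $X_0:=\pt$ and inductively choose $Z_n\in\cP$ with $X_{n-1},Y_n\prec Z_n$. If $f_1:Z_n\to X_{n-1}$ and $f_2:Z_n\to Y_n$ are the witnessing 1-Lipschitz maps, let $X_n$ be the support of $(f_1,f_2)_*\mu_{Z_n}$ inside the $\ell^\infty$ product $X_{n-1}\times Y_n$, equipped with this measure. Then $X_n$ is a compact mm-space with $X_n\prec Z_n$, so $X_n\in\cP$ by downward closure; the coordinate projections show that $X_{n-1},Y_n\prec X_n$; and $\diam X_n\leq\max(\diam X_{n-1},\diam Y_n)$. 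Iterating produces compact $X_n\in\cP$ with $Y_n\prec X_n$, $X_{n-1}\prec X_n$, and $\sup_n\diam X_n\leq\sup_{Y\in\cY}\diam Y<\infty$. After extracting a box-convergent subsequence $X_{n_k}\to X$, box-closedness of $\cP$ gives $X\in\cP$, and passing $Y_n\prec X_{n_k}$ to the limit (the set $\{Z\in\cX:Y_n\prec Z\}$ being box-closed) gives $Y_n\prec X$ for every $n$; Step 1 then concludes.

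The principal obstacle is that the chain $\seqn{X_n}$ above is only controlled in diameter and need not be box-precompact: each $X_n$ embeds into the iterated product $Y_1\times\cdots\times Y_n$ whose $\ep$-covering numbers can grow like $N(\ep)^n$, so a diameter bound alone does not supply the uniform tightness required to extract a box-convergent subsequence. The hard part of the proof is therefore to refine the choice of $X_n$ so as to obtain uniform tightness of $\seqn{\mu_{X_n}}$ in the sense of Shioya's box-precompactness criterion; a natural approach is to quantize each $Y_n$ first by a finite $\ep_n$-net, then join inside $\cP$, and combine with a diagonal selection, all while preserving membership in $\cP$ and monotonicity in $\prec$. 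Arranging these controls simultaneously is where the GH-precompactness of the supports must be used essentially, rather than merely the weaker conclusion that $\cY$ itself is box-precompact.
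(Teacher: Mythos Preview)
Your outline matches the paper's architecture up to the point you yourself flag as the obstacle: reduce to a countable box-dense family, build inside $\cP$ an ascending chain of compact upper bounds (your $\ell^\infty$-product construction is essentially Lemma~\ref{lem;cptdominate}), and extract a box-limit. But your last paragraph is not a proof of anything; it is a list of desiderata. ``Quantize each $Y_n$ by a finite net, then join inside $\cP$, and diagonalize'' does not by itself cure the blow-up: the join of $n$ quantized spaces can still have cardinality of order $N(\ep)^n$, and you give no mechanism for controlling the metric of the join. So the proposal has a genuine gap exactly where you say the hard part is.

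The paper does not refine the iterated product. Instead, for each $n$ it fixes a \emph{finite} subset $\cY_n$ of the dense family together with uniform finite partitions $\cA_{Y,n}$ of each $Y$ (supplied by GH-precompactness) and basepoints $\pt_{Y,n,i}$; crucially, $\cY_{n+1}$ is grouped around $\cY_n$ so that nearby $Y,Z$ have basepoint distance-matrices agreeing to within $\ep_n$. Then $X_n$ is the finite quotient of a single $W_{L(n)}\in\cP$ (dominating all of $\cY_n$) by the common refinement of the pulled-back partitions, equipped with the max-metric over $\cY_n$. The cardinality of $X_n$ is still uncontrolled, but $X_n$ embeds isometrically into a finite metric space $V_n$ of all set-valued selections from the $\cA_{Y,n}$; the basepoint matching condition yields maps $V_{n+1}\to V_n$ under which distances decrease by at most $3\ep_n$, and an inverse-limit argument (Lemma~\ref{lem;X}) forces $\seqn{V_n}$ to GH-converge. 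This is what supplies box-precompactness of $\seqn{X_n}$, via Lemmas~\ref{lem;precpt} and~\ref{lem;GHprecpt}. The GH-precompactness hypothesis is thus used twice beyond the diameter bound: once for the uniform partition sizes, and once (through compactness of $[0,D]$) to achieve the distance-matrix matching with only finitely many classes.
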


There is also the notion of \emph{measured GH (mGH) convergence} for sequences of compact mm-spaces, e.g.~\cite[Definition~4.33]{S}.
The relation between the assumption of Theorem~\ref{thm;main} and the mGH-precompactness is as follows:
If $\seqn{Y_n}$ is a sequence in $\cY$ in Theorem~\ref{thm;main},
then a subsequence of $\seqn{\supp\mu_{Y_n}}$ mGH-converges to some compact mm-space $Y$ with possibly $\supp\muy\ne Y$.
If a set $\cY$ of mm-spaces is mGH-precompact, then the sets $\cY$ and $\{\supp\muy:Y\in\cY\}$ are GH-precompact,
cf.~\cite[Remark~4.34]{S}.

If $\cY$ in Theorem~\ref{thm;main} is a finite subset,
then the existence of such $X\in\cP$ follows easily from Lemma~\ref{lem;cptdominate}.

As $\cX$ is itself a pyramid,
our Theorem~\ref{thm;main} was inspired by the following result,
whose proof was sketched in \cite{G} and detailed in \cite{KY}.
\begin{thm}[Gromov~{\cite[3$\frac{1}{2}$.15.(f)]{G}}, cf.~\cite{KY}]\label{thm;G}
If a subset $\cY\subset\cX$ is $\Box$-precompact,
then there exists an mm-space $X$ with $Y\prec X$ for any $Y\in\cY$.
\end{thm}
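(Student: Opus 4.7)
The plan is to reduce to a countable $\Box$-dense subset of $\cY$ and realize $X$ as a ``supremum'' pseudometric on the unit interval $I := [0,1]$ with Lebesgue measure $\lambda$, exploiting Gromov's parameterization of mm-spaces.

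Since $\cY$ is $\Box$-precompact and hence $\Box$-separable, I fix a countable $\Box$-dense sequence $\seqn{Y_n} \subset \cY$. Every mm-space (after possibly adjoining a fixed discrete space to accommodate atoms) is mm-isomorphic to $(I, \tilde d, \lambda)$ for some measurable pseudometric $\tilde d$ on $I$; moreover, $\Box$ admits a characterization in terms of these parameterizing pseudometrics, namely $\Box(Y, Y') \leq \ep$ iff there exist parameterizations whose pseudometrics differ by at most $\ep$ outside a subset of $I$ of measure at most $\ep$. Using this characterization together with $\Box$-precompactness, I would choose parameterizations $\phi_n \colon I \to Y_n$, with associated pseudometrics $\tilde d_n$, in a jointly compatible manner.

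I then set $\tilde d^*(s,t) := \sup_n \tilde d_n(s,t)$, a (potentially extended-valued) pseudometric on $I$, and take $X$ to be the metric quotient and completion of $(I, \tilde d^*, \lambda)$. For each $n$, since $\tilde d^* \geq \tilde d_n$, the identity on $I$ descends to a $1$-Lipschitz measure-preserving map $X \to Y_n$, witnessing $Y_n \prec X$. By $\Box$-density of $\seqn{Y_n}$ in $\cY$ and $\Box$-closedness of the Lipschitz order, $Y \prec X$ for every $Y \in \cY$.

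The main obstacle is to show that $\tilde d^*$ is $\lambda \otimes \lambda$-a.e.\ finite and that the resulting quotient is a complete separable metric space carrying a Borel probability measure, so that $X$ is genuinely an mm-space. Here $\Box$-precompactness must be used quantitatively: for each $\ep > 0$, I would extract a finite $\Box$-$\ep$-net $\{Y_{n_1}, \dots, Y_{n_N}\}$ in $\seqn{Y_n}$, so that every $\tilde d_n$ agrees with some $\tilde d_{n_j}$ up to $\ep$ outside a subset of $I$ of measure at most $\ep$. A diagonal argument over a decreasing sequence of such $\ep$'s then controls $\tilde d^*$ on a nested family of subsets of $I$ whose measures tend to $1$, yielding a.e.\ finiteness; after passing to the quotient and completion, the resulting space inherits separability and carries a well-defined Borel probability measure, and this is where $\Box$-precompactness is essentially used.
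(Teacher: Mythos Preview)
The paper does not supply its own proof of this theorem; it merely cites Gromov's sketch \cite[3$\frac{1}{2}$.15.(f)]{G} and the detailed argument in the authors' earlier paper \cite{KY}. So there is nothing in the present paper to compare your proposal against line by line. Your outline is in the spirit of Gromov's original sketch (build a supremum pseudometric on $I$), but as written it has a genuine gap at exactly the point where the real work lies.

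The phrase ``choose parameterizations $\phi_n$ in a jointly compatible manner'' hides the entire difficulty. The box distance only guarantees that for each pair $Y_n, Y_{n_j}$ with $\Box(Y_n, Y_{n_j}) < \ep$ there \emph{exist} parameterizations whose pseudometrics are $\ep$-close off a small set; it says nothing about the particular parameterizations $\phi_n$ you have already fixed. Your finiteness argument then silently assumes that the fixed $\tilde d_n$ is $\ep$-close to the fixed $\tilde d_{n_j}$ off a set of measure $\le \ep$, which does not follow. Worse, even if this held for each $n$ separately, the exceptional set would depend on $n$: to bound $\tilde d^*(s,t) = \sup_n \tilde d_n(s,t)$ you need $(s,t)$ to lie outside the exceptional set for \emph{every} $n$ simultaneously, and a countable union of sets of measure $\le \ep$ can easily have full measure. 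Your ``diagonal argument'' does not address this.

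What is actually needed is a construction that forces the exceptional sets to be uniform in $n$, or an alternative route that avoids the supremum pseudometric altogether. This is precisely the content that \cite{KY} supplies and that Gromov left as a sketch; it cannot be waved away. Until you specify a mechanism for producing parameterizations with this uniform compatibility (for instance, by threading them through an increasing chain of dominating spaces and controlling all the pushforwards at once), the argument is incomplete.
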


We know that the set $\cY$ in Theorem~\ref{thm;main} is $\Box$-precompact by Lemmas~\ref{lem;precpt} and \ref{lem;GHprecpt}.
We do not know to what extent Theorems~\ref{thm;main} and \ref{thm;G} could be generalized.

In Example~\ref{ex;XDN},
we construct examples of $\cY$ and $X$ as in Theorem~\ref{thm;G}.

The following is an interesting corollary of Theorem~\ref{thm;main}.
\begin{cor}\label{cor;main}
Let $\cY$ be a set of essentially nonbranching mm-spaces which satisfies the ${\rm CD}^*(K, N)$ condition
and $\cP$ the set of all (isomorphism classes of) mm-spaces satisfying the Talagrand inequality $T_2 (KN/(N-1))$
for some $K>0$ and $N\in (1, \infty)$.
Then there exists a compact mm-space $X\in\cP$ with $Y\prec X$ for any $Y\in\cY$.
\end{cor}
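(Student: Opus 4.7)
The plan is to apply Theorem~\ref{thm;main}, so I must check that (i)~$\cP$ is a pyramid, (ii)~$\cY\subset\cP$, and (iii)~the set $\{\supp\muy:Y\in\cY\}$ consists of compact sets and is GH-precompact.

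For (i), write $C:=KN/(N-1)$ and verify the three axioms of a pyramid. \emph{Downward closure under $\prec$}: if $Y\prec X$ is realized by a $1$-Lipschitz map $f\colon X\to Y$ with $f_*\mux=\muy$, then any $\nu\ll\muy$ lifts to $\tilde\nu:=((d\nu/d\muy)\circ f)\,\mux$, which pushes forward to $\nu$ under $f$ and satisfies $H(\tilde\nu\mid\mux)=H(\nu\mid\muy)$ by change of variables; hence $W_2(\nu,\muy)^2\le W_2(\tilde\nu,\mux)^2\le C\,H(\tilde\nu\mid\mux)=C\,H(\nu\mid\muy)$, so $Y$ inherits $T_2(C)$. \emph{Upward directedness}: the product $X_1\otimes X_2$ with $\ell^2$-product metric and product measure dominates both factors in the Lipschitz order, and the classical tensorization property of $T_2$ preserves the constant $C$. \emph{$\Box$-closedness}: this is the delicate axiom, to be handled by a limiting argument exploiting the lower semicontinuity of $W_2$ and of the relative entropy along couplings furnished by $\Box$-convergence.

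For (ii), I invoke the known result from Lott--Sturm--Villani theory that an essentially nonbranching mm-space satisfying ${\rm CD}^*(K,N)$ with $K>0$ and $N\in(1,\infty)$ satisfies the Talagrand inequality $T_2(KN/(N-1))$. For (iii), the generalized Bonnet--Myers theorem under ${\rm CD}^*(K,N)$ gives the uniform diameter bound $\diam(\supp\muy)\le\pi\sqrt{(N-1)/K}$, so each $\supp\muy$ is compact. The generalized Bishop--Gromov inequality furnishes a uniform doubling constant for $\muy$ on $\supp\muy$, yielding a uniform upper bound on the $\ep$-covering number of $\supp\muy$ for every $\ep>0$, and Gromov's precompactness theorem then gives GH-precompactness of the family.

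With (i)--(iii) in hand, Theorem~\ref{thm;main} delivers the desired compact $X\in\cP$ that dominates every $Y\in\cY$ in the Lipschitz order. The expected main obstacle is the $\Box$-closedness of $\cP$ in step~(i); once that is available, the rest is a direct assembly of well-known consequences of the curvature-dimension condition.
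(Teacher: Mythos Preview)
Your strategy is exactly the paper's: verify the hypotheses of Theorem~\ref{thm;main} by checking that $\cP$ is a pyramid, that $\cY\subset\cP$, and that $\{\supp\muy:Y\in\cY\}$ is GH-precompact, then apply the theorem. The paper handles all three points by citation rather than by the direct arguments you sketch, but the logical structure is identical.

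Two remarks on the references that fill in what you flag as incomplete. First, the $\Box$-closedness of $\cP$ that you single out as the main obstacle is indeed the nontrivial axiom; the paper does not carry out the lower-semicontinuity argument you outline but instead cites Ozawa--Suzuki~\cite{OS}, who prove stability of $T_2$ under the \emph{concentration} topology. Since $\Box$-convergence implies convergence in the observable distance (e.g.~\cite[Proposition~5.5]{S}), this gives $\Box$-closedness for free, and in fact yields the stronger closedness in the coarser topology. Second, for the inclusion $\cY\subset\cP$, the sharp constant $KN/(N-1)$ in the Talagrand inequality under essentially nonbranching ${\rm CD}^*(K,N)$ is specifically the content of Cavalletti--Mondino~\cite[Theorem~1.6]{CM} via needle decomposition, not generic Lott--Sturm--Villani theory (which yields $T_2(K)$ under ${\rm CD}(K,\infty)$ but not the dimensional improvement).
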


In Corollary~\ref{cor;main},
the set $\cY$  is GH-precompact by the Myers theorem and the Bishop--Gromov inequality,
the fact that $\cP$ is a pyramid follows from the combination of e.g.~\cite[Proposition~5.5]{S},
Bakry--Gentil--Ledoux~\cite[Proposition~9.2.4]{BGL}, and Ozawa--Suzuki~\cite[Theorem~1.1]{OS},
cf.~\cite[Lemma~2.1, Proposition~3.20]{K;CAG},
and that $\cY\subset\cP$ follows from Cavalletti--Mondino~\cite[Theorem~1.6]{CM}.
Then Corollary~\ref{cor;main} is a direct consequence of Theorem~\ref{thm;main}.

The ${\rm CD}^*(K, N)$ condition is the so-called reduced curvature-dimension condition
and the Talagrand inequality is one of the functional inequalities that are studied extensively on mm-spaces.
We refer to the above references for the definitions.

We defined a natural analog of the Lipschitz order $\prec$ for metric spaces in \cite{KY}
and we will define \emph{pyramids of compact metric spaces} in Definition~\ref{def;GHpyramid},
cf.~Nakajima--Shioya~\cite{NS} and Remark~\ref{rem;NS}.
Then the following is a GH analog of Theorem~\ref{thm;main} and extends \cite[Proposition~1.4]{KY},
which deals with the case with $\cP$ being the set of all isometry classes of compact metric spaces.
\begin{prop}\label{prop;main}
If $\cP$ is a pyramid of compact metric spaces and $\cY\subset\cP$ is a GH-precompact subset,
then there exists a compact metric space $X\in\cP$ with $Y\prec X$ for any $Y\in\cY$.
\end{prop}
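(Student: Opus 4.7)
The plan is to parallel the strategy of Theorem~\ref{thm;main} in the GH setting: use the GH-precompactness of $\cY$ to reduce to a countable dense subfamily of $\overline{\cY}\subset\cP$, build a $\prec$-increasing sequence of common dominators in $\cP$ via the directedness of the pyramid, and pass to a GH-subsequential limit inside $\cP$. Since a pyramid of compact metric spaces is GH-closed (by Definition~\ref{def;GHpyramid}), the GH-compact set $\overline{\cY}$ is contained in $\cP$; fix a countable GH-dense sequence $\seqn{Y_n}\subset\overline{\cY}$.

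By iterated use of the directedness of $\cP$ (the GH analog of Lemma~\ref{lem;cptdominate}), I would inductively produce $X_n\in\cP$ with $Y_1,\dots,Y_n\prec X_n$ and $X_{n-1}\prec X_n$. To control the size of $X_n$ and secure GH-precompactness of $\{X_n\}$, apply the following pruning: given $1$-Lipschitz surjections $f_i\colon X_n\to Y_i$ for $i=1,\dots,n$, the image $X_n':=(f_1,\dots,f_n)(X_n)\subset Y_1\times\cdots\times Y_n$ (with the max metric) satisfies $X_n'\prec X_n$, hence $X_n'\in\cP$ by downward-closedness, while $X_n'\succ Y_i$ via projection and $\diam(X_n')\leq\max_i\diam(Y_i)\leq\sup_{Y\in\cY}\diam(Y)<\infty$. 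Replacing $X_n$ by $X_n'$, the diameters are uniformly bounded, and together with an argument using uniformity of $\varepsilon$-nets in $\overline{\cY}$ one shows that $\{X_n\}$ is GH-precompact. Extract a GH-convergent subsequence $X_{n_k}\to X$; GH-closedness of $\cP$ yields $X\in\cP$. For any $Y\in\cY$, pick $Y_{m_j}\to Y$ in GH; since $Y_{m_j}\prec X_{n_k}$ whenever $m_j\leq n_k$, an Arzel\`a--Ascoli diagonal limit of the $1$-Lipschitz surjections $X_{n_k}\to Y_{m_j}$ produces a $1$-Lipschitz surjection $X\to Y$, i.e.,~$Y\prec X$.

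The main obstacle is securing GH-precompactness---specifically uniform total boundedness---of the dominators $\{X_n\}$. The product-image pruning readily bounds their diameters, but does not directly control $\varepsilon$-net sizes uniformly in $n$, since the $n$-fold product $Y_1\times\cdots\times Y_n$ has $\varepsilon$-nets of size growing with $n$. One therefore needs a more delicate choice: for instance, organizing the pruning so that all $X_n'$ embed isometrically into a single compact metric space (built from GH-copies of the $Y_n$'s inside a fixed compact ambient space), or performing the common-dominator construction inside $\cP$ directly with uniform quantitative control. The secondary subtlety, the GH-passage-to-the-limit of the Lipschitz order, is handled by Arzel\`a--Ascoli applied to equicontinuous families of $1$-Lipschitz surjections between compact metric spaces.
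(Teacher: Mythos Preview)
Your outline captures the paper's high-level shape---countable dense subfamily, increasing common dominators $W_l\in\cP$ via directedness, pass to a GH-limit in $\cP$, then close under Lemma~\ref{lem;GHprecep}---but the gap you yourself flag is precisely where the paper invests all of its work, and your proposal does not close it. The pruning $X_n'=(f_1,\dots,f_n)(X_n)\subset Y_1\times\cdots\times Y_n$ controls diameters but, as you note, gives no uniform $\varepsilon$-net bound, and the vague remedies you list (``embed into a single compact ambient space'', ``quantitative control inside $\cP$'') are not arguments.

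The paper's resolution is substantially different from product pruning. It does \emph{not} try to make the full dominators $W_l$ GH-precompact. Instead it builds, for each $n$, a \emph{finite} approximating space $X_n$ as follows: using Lemma~\ref{lem;GHprecpt} one fixes uniform Borel partitions $\cA_{Y,n}$ of each $Y$ with mesh $<\ep_n$ and $\sup_Y\#\cA_{Y,n}<\infty$; one then chooses finite nested subsets $\cY_n\subset\cD$ so that spaces in the same ``cell'' $\cY_{Y,n+1}$ have matching partition cardinalities and nearly equal pairwise distances between chosen representatives (Inequality~(4.1)); the space $X_n$ consists of those tuples $(\phi(Y))_{Y\in\cY_n}$ of partition indices that are simultaneously realized in $W_{L(n)}$. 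The crucial precompactness step is Claim~\ref{cl;V}: one embeds $X_n$ into an auxiliary finite space $V_n$ (set-valued index tuples) and constructs maps $V_{n+1}\to V_n$ that are expanding up to $3\ep_n$; Lemma~\ref{lem;X} then yields GH-convergence of $V_n$, hence GH-precompactness of $\{X_n\}$. Finally $X_n\prec_{2\ep_n}W_{L(n)}$ and $Y\prec_{\ep_n}X_n$ give $X\in\cP$ and $\cY\prec X$ via Lemmas~\ref{lem;KGH} and \ref{lem;GHprecep}.

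The point is that precompactness comes not from bounding the dominators themselves but from discretizing along a \emph{fixed finite} family $\cY_n$ at each scale and exploiting the grouping~(4.1) to build the inverse-system maps required by Lemma~\ref{lem;X}. Your product-image picture lacks both the finite-$\cY_n$ organization and the inverse-limit mechanism; without them the argument does not go through.
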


For a complete separable metric space $Z$,
we let $\cP(Z)$ denote the set of all Borel probability measures on $Z$ equipped with the Prohorov distance $\proh$.
Prohorov's theorem states that a subset $\cM\subset\cP(Z)$ is precompact in $(\cP(Z), \proh)$ if and only if $\cM$ is tight.

The following is a byproduct of this work,
which gives another characterization of $\Box$-precompact subsets in $\cX$.
A similar statement is well known for GH-precompact sets of compact metric spaces, e.g.~\cite[Theorem~7.19]{T}.
\begin{prop}\label{prop;embed}
Let $\cY$ be a set of mm-spaces.
Then the following are equivalent.
\begin{enumerate}
\item $\cY$ is $\Box$-precompact.
\item There exists a complete separable metric space $Z$ for which
any $Y\in\cY$ admits an isometric embedding $f_Y :\supp\muy\to Z$
so that $\{(f_Y)_* \muy\}_{Y\in\cY} \subset\cP(Z)$ is tight.
\end{enumerate}
\end{prop}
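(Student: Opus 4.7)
For $(2)\Rightarrow(1)$, I would combine Prohorov's theorem with the standard estimate $\Box((Z,d_Z,\mu),(Z,d_Z,\nu))\le 2\,\proh(\mu,\nu)$ for mm-spaces sharing a common Polish ambient space. The identification of each $Y\in\cY$ with $(Z,d_Z,(f_Y)_*\muy)$ is legitimate because $f_Y$ is measure-preserving and isometric on $\supp\muy$. Given a sequence $\seqn{Y_n}$ in $\cY$, tightness yields a $\proh$-convergent subsequence of $\{(f_{Y_n})_*\mu_{Y_n}\}$ by Prohorov, and the estimate promotes this to a $\Box$-convergent subsequence. Thus $\cY$ is $\Box$-precompact.

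The direction $(1)\Rightarrow(2)$ is more substantial. First, I would invoke Gromov's criterion for $\Box$-precompactness: $\cY$ is $\Box$-precompact iff for every $\ep>0$ there exist uniform constants $D_\ep>0$ and $N_\ep\in\N$ such that every $Y\in\cY$ admits a Borel subset $E_Y^\ep\subset\supp\muy$ with $\muy(E_Y^\ep)\ge 1-\ep$, $\diam E_Y^\ep\le D_\ep$, and an $\ep$-net of cardinality at most $N_\ep$. Replacing $E_Y^{1/k}$ by its closure and then by $E_Y^{(k)}:=\bigcup_{j\le k}\overline{E_Y^{1/j}}$, we may assume these sets are compact, nested in $k$, contained in $\supp\muy$, and satisfy $\overline{\bigcup_k E_Y^{(k)}}=\supp\muy$. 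Second, for each $k$ the family $\{E_Y^{(k)}:Y\in\cY\}$ is uniformly totally bounded and hence GH-precompact, so by the classical embedding result for GH-precompact families of compact metric spaces, there exist a compact metric space $A_k$ and isometric embeddings $E_Y^{(k)}\hookrightarrow A_k$ for every $Y$.

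The remaining and hardest step is to assemble these level-wise embeddings into a single isometric $f_Y:\supp\muy\to Z$ into a common Polish space $Z$ with $f_Y(E_Y^{(k)})\subset A_k$ for all $k$. My plan is to take $Z$ to be the universal Urysohn space $\mathbb{U}$, realize the $A_k$'s as a nested sequence of compact subsets of $\mathbb{U}$, and construct $f_Y$ inductively in $k$: at stage $k+1$ I extend the already-fixed isometric embedding $f_Y|_{E_Y^{(k)}}$, whose image lies in $A_k$, to an isometric embedding of $E_Y^{(k+1)}$ into $A_{k+1}$, exploiting the extension property of $\mathbb{U}$ after having enlarged $A_{k+1}$ (still keeping it compact in $\mathbb{U}$) so that such extensions are available for every $Y$ simultaneously. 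Taking the pointwise limit yields $f_Y$ on $\overline{\bigcup_k E_Y^{(k)}}=\supp\muy$. Tightness is then immediate from $(f_Y)_*\muy(A_k)\ge \muy(E_Y^{(k)})\ge 1-1/k$ uniformly in $Y$. The principal difficulty is the uniform-in-$Y$ enlargement of the $A_k$'s inside $\mathbb{U}$ so that the inductive extension of partial isometric embeddings is always possible while the $A_k$'s remain compact.
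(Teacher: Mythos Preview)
Your argument for $(2)\Rightarrow(1)$ is correct and coincides with the paper's: Prohorov's theorem plus the estimate $\Box\le 2\proh$ does the job.

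For $(1)\Rightarrow(2)$, however, the step you yourself flag as ``the principal difficulty'' is a genuine gap, not just a technicality. The Urysohn space $\mathbb{U}$ does have the finite (indeed compact) extension property, so for each fixed $Y$ you can extend $f_Y|_{E_Y^{(k)}}$ to an isometric embedding of $E_Y^{(k+1)}$ into $\mathbb{U}$. But these extensions are highly non-canonical, $\mathbb{U}$ is nowhere locally compact, and $\cY$ may be uncountable. There is no mechanism in your outline that forces the closure of $\bigcup_{Y\in\cY} f_Y(E_Y^{(k+1)})$ to be compact: two spaces $Y,Y'$ with isometric $E_Y^{(k)}\cong E_{Y'}^{(k)}$ embedded identically in $A_k$ may have very different $E_Y^{(k+1)}$, and even when the pairs $(E_Y^{(k)},E_Y^{(k+1)})$ fall into finitely many approximate isometry types, the extensions you obtain from the Urysohn property are only controlled individually, not jointly. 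Reducing to finitely many ``extension problems'' up to $\ep$ would give only $\ep$-isometric embeddings at each stage, and turning those into a single exact isometric $f_Y$ on all of $\supp\muy$ requires a further limiting argument that you have not supplied.

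The paper avoids this coherence problem by a completely different construction. Rather than approximating each $Y$ from inside by compact pieces, it partitions $\cY$ itself into a nested tree of $\Box$-small pieces $\cY_{n,i}$, chooses a representative mm-space $Y_{n,i}$ in each piece, and glues the disjoint union $\bigsqcup_{(n,i)} Y_{n,i}$ into a single Polish space $Z$ by the Union-lemma recipe: whenever $\cY_{n+1,j}\subset\cY_{n,i}$, the box distance $\Box(Y_{n,i},Y_{n+1,j})<\ep_n$ provides parameters that dictate the cross-distances. Each $Y\in\cY$ then determines a branch $(Y_{n,i(n)})_n$ of representatives whose pushforward measures form a $\proh$-Cauchy sequence in $\cP(Z)$; the limit measure gives the embedding $f_Y$. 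Tightness follows because the whole family $\{\mu_{n,i}\}$ is already $\proh$-precompact. The key point is that the gluing is performed once, at the level of a countable tree of representatives, so no inductive extension of uncountably many partial isometries is ever needed.
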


Gigli--Mondino--Savar\'{e}~\cite{GMS} introduced the notion of \emph{pointed measured Gromov (pmG) convergence}
for sequences of pointed metric spaces with locally finite measures.
Its definition is recalled in Definition~\ref{def;pmG}.
The following states that it is essentially equivalent to the $\Box$-convergence in our setting.
This was proved in the thesis \cite{K;thesis} of the first author, but here we prove it as a corollary of Lemma~\ref{lem;countableembed}.
\begin{cor}[\cite{K;thesis}]\label{cor;pmG}
Let $X_n$ for $\niN$ and $X$ be mm-spaces.
Then the following are equivalent.
\begin{enumerate}
\item $\seqn{X_n}$ $\Box$-converges to $X$.
\item There exist $x_n\in\supp\mu_{X_n}$ and $x\in\supp\mux$ with which $\seqn{(X_n, x_n)}$ pmG-converges to $(X, x)$.
\item For any $x\in\supp\mux$, there exist $x_n \in\supp\mu_{X_n}$ with which $\seqn{(X_n, x_n)}$ pmG-converges to $(X, x)$.
\end{enumerate}
\end{cor}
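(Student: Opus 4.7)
The plan is to pass through a common ambient metric space realizing both modes of convergence: Lemma~\ref{lem;countableembed} supplies it for the direction $(1)\Rightarrow(3)$, while the common space is built into the very definition of pmG-convergence for $(2)\Rightarrow(1)$. The implication $(3)\Rightarrow(2)$ is immediate since $\supp\mux\ne\emptyset$.

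For $(1)\Rightarrow(3)$, I would apply Lemma~\ref{lem;countableembed} to the countable family $\{X\}\cup\seqn{X_n}$ to obtain a complete separable metric space $Z$ and isometric embeddings $\iota:\supp\mux\to Z$, $\iota_n:\supp\mu_{X_n}\to Z$ with $\proh\bigl((\iota_n)_*\mu_{X_n},\iota_*\mux\bigr)\to 0$. Given $x\in\supp\mux$ and a decreasing neighborhood base $\{U_k\}_{k\ge 1}$ of $\iota(x)$ in $Z$, the Portmanteau theorem forces $(\iota_n)_*\mu_{X_n}(U_k)>0$ once $n$ is sufficiently large, since $\iota_*\mux(U_k)>0$; hence $\iota_n(\supp\mu_{X_n})\cap U_k\ne\emptyset$ eventually. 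A standard diagonal extraction then produces $x_n\in\supp\mu_{X_n}$ with $\iota_n(x_n)\to\iota(x)$, which together with the $\proh$-convergence of the pushforwards realizes pmG-convergence of $(X_n,x_n)$ to $(X,x)$.

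For $(2)\Rightarrow(1)$, pmG-convergence provides a complete separable metric space $Z$, isometric embeddings of the supports, convergent base points $\iota_n(x_n)\to\iota(x)$, and weak-local convergence of the pushforward measures in the GMS sense (in duality with continuous functions of bounded support on $Z$). Using $\iota(x)$ as a reference point and testing against bump functions on growing balls, the global finiteness of the probability measures upgrades this to weak convergence in duality with $C_b(Z)$, which on the Polish space $Z$ coincides with $\proh$-convergence. The standard inequality relating $\Box$ to $\proh$ on a common ambient space then delivers $\Box(X_n,X)\to 0$.

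The main obstacle is the tightness upgrade in $(2)\Rightarrow(1)$: the GMS test functions see only metrically bounded regions of $Z$, so mass potentially escaping to infinity must be excluded using that each measure has total mass $1$ together with the fact that the base points stay in a bounded region. Once this is settled, the translation between $\Box$ and $\proh$ on a common space is routine, as is the diagonal argument in $(1)\Rightarrow(3)$; the essential structural input throughout is the common embedding furnished by Lemma~\ref{lem;countableembed}.
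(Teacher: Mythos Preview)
Your proposal is correct and follows essentially the same route as the paper: Lemma~\ref{lem;countableembed} plus Portmanteau for $(1)\Rightarrow(3)$, and Lemma~\ref{lem;BP} for $(2)\Rightarrow(1)$. Note, however, that the ``main obstacle'' you flag does not arise here: the paper's Definition~\ref{def;pmG} already demands ordinary weak convergence of the pushforward probability measures (not merely the GMS local convergence against boundedly supported test functions), so $(2)\Rightarrow(1)$ is a one-line consequence of Lemma~\ref{lem;BP} with no tightness upgrade needed.
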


After some preparations in Sections~\ref{sec;pre} and \ref{sec;lem},
we present proofs of Theorem~\ref{thm;main} and Proposition~\ref{prop;main} in Section~\ref{sec;pf}.
Several results related to our Theorem~\ref{thm;main} are collected in Section~\ref{sec;mis}.
Proofs of Proposition~\ref{prop;embed} and Corollary~\ref{cor;pmG} are given in Section~\ref{sec;embed}.

\section{Preliminaries}\label{sec;pre}
In this section, we recall some definitions and facts from \cite{S} and \cite{KY} and define pyramids of compact metric spaces in Definition~\ref{def;GHpyramid}.
This section could be safely skipped possibly except for Definition~\ref{def;GHpyramid}.

We will use the notations in \cite{S} such as
\[
U_\ep (A) := \{ x\in X : d_X (x, A) <\ep \}, \qquad
B_\ep (A) := \{ x\in X : d_X (x, A) \le\ep \}
\]
for a subset $A\subset X$ of a metric space $(X, d_X)$ and $\ep>0$.

Let $\cP(X)$ denote the set of all Borel probability measures on a metric space $X$.
First, we collect useful facts on the Prohorov distance $\proh$ on $\cP(X)$.
\begin{lem}[e.g. {\cite[Lemmas 1.26, 4.36]{S}}]\label{lem;P}
Let $X$ and $Y$ be metric spaces.
Suppose that $f, g:X\to Y$ are Borel maps, $\mu, \nu\in\cP(X)$, $\ep\ge 0$, and $\tX\subset X$ is a Borel set with $\mu(\tX)\ge 1-\ep$.
\begin{enumerate}
\item If $d_Y (f(x), g(x))\le\ep$ for any $x \in\tX$, then $\proh(f_* \mu, g_* \mu)\le \ep$.
\item If $d_Y (f(x), f(x'))\le d_X (x, x')+\ep$ for any $x, x'\in\tX$ and $\nu(\tX)\ge 1-\ep$, then $\proh(f_* \mu, f_* \nu) \le \proh(\mu, \nu)+2\ep$.
\end{enumerate}
\end{lem}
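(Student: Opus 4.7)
The plan is to unfold the definition of the Prohorov distance, namely $\proh(\mu,\nu) \le \delta$ iff $\mu(A) \le \nu(B_\delta(A)) + \delta$ for every Borel $A$ (symmetrically), and to run a three-step bookkeeping argument: discard mass outside $\tX$, apply the Prohorov hypothesis, then translate a neighborhood in $X$ into one in $Y$ using the (approximately) Lipschitz condition on $f|_{\tX}$.

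For (1), I would fix a Borel $A \subset Y$ and observe that any $x \in f^{-1}(A) \cap \tX$ satisfies $g(x) \in B_\ep(f(x)) \subset B_\ep(A)$, so $f^{-1}(A) \cap \tX \subset g^{-1}(B_\ep(A))$. Combining with $\mu(X \setminus \tX) \le \ep$ gives $f_* \mu (A) \le g_* \mu (B_\ep(A)) + \ep$. The roles of $f$ and $g$ are symmetric in the hypothesis, so the reverse inequality holds by the same argument, and the bound on $\proh(f_* \mu, g_* \mu)$ follows.

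For (2), pick any $\delta > \proh(\mu,\nu)$ and a Borel $B \subset Y$, and set $A := f^{-1}(B) \cap \tX$. First, $f_* \mu (B) \le \mu(A) + \ep$ since $\mu(\tX) \ge 1-\ep$. Next, $\mu(A) \le \nu(U_\delta(A)) + \delta$ by choice of $\delta$, and $\nu(U_\delta(A)) \le \nu(U_\delta(A) \cap \tX) + \ep$ by $\nu(\tX) \ge 1-\ep$. Finally, any $x \in U_\delta(A) \cap \tX$ admits $x_0 \in A \subset \tX$ with $d_X(x, x_0) < \delta$; the approximate-Lipschitz hypothesis then yields $d_Y(f(x), f(x_0)) < \delta + \ep$ with $f(x_0) \in B$, so $U_\delta(A) \cap \tX \subset f^{-1}(U_{\delta+\ep}(B))$. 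Chaining these estimates produces $f_* \mu (B) \le f_* \nu (U_{\delta+\ep}(B)) + \delta + 2\ep$; the analogous bound with $\mu, \nu$ swapped holds by the same argument (using $\mu(\tX), \nu(\tX) \ge 1-\ep$ symmetrically), and letting $\delta \downarrow \proh(\mu,\nu)$ gives the conclusion.

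The argument has no real obstacle; the only thing to keep straight is the accounting of the two $\ep$'s in (2), one from each mass discard outside $\tX$, and the single $\ep$-enlargement coming from the distortion of $f$ on $\tX$, all of which must land inside the Prohorov ball of radius $\proh(\mu,\nu) + 2\ep$.
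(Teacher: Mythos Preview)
The paper does not give its own proof of this lemma; it simply cites \cite[Lemmas~1.26, 4.36]{S}. Your argument is the standard direct verification from the definition of the Prohorov distance and is correct: in (1) the single $\ep$-discard outside $\tX$ together with $f^{-1}(A)\cap\tX\subset g^{-1}(B_\ep(A))$ is exactly what is needed, and in (2) your chain of inequalities yields $f_*\mu(B)\le f_*\nu(U_{\delta+\ep}(B))+\delta+2\ep\le f_*\nu(B_{\delta+2\ep}(B))+(\delta+2\ep)$, which (together with the symmetric bound) gives $\proh(f_*\mu,f_*\nu)\le\delta+2\ep$ for every $\delta>\proh(\mu,\nu)$.
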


The \emph{support} $\supp\mu$ of a Borel measure $\mu$ on a separable metric space $X$ is the set of points $x\in X$ with
$\mu(O)>0$ for any open set $O\subset X$ with $x\in O$.
We say that two mm-spaces $X$ and $Y$ are \emph{mm-isomorphic}
if there exists an isometry $f:\supp\mux\to\supp\muy$ with $f_* \mux=\muy$.
We let $\cX$ denote the set of all mm-isomorphism classes of mm-spaces,
but we do not distinguish an mm-space and its mm-isomorphism class in $\cX$.
Since any mm-space $X$ is mm-isomorphic to $(\supp\mux, d_X, \mux)$,
hereafter an mm-space $X$ is supposed to satisfy $X=\supp\mux$ as usual except for a few exceptions.

\begin{defn}[e.g.~{\cite[Definition~4.4]{S}}]
Let $\cL$ denote the $1$-dimensional Lebesgue measure on the closed unit interval $I:=[0, 1]\subset\R$.
A Borel map $\phi:I\to X$ is called a \emph{parameter} of an mm-space $X$ if it satisfies $\phi_* \cL=\mux$.
Any mm-space admits a parameter, e.g.~\cite[Lemma~4.2]{S}.

The \emph{box distance} $\Box(X, Y)$ between mm-spaces $X$ and $Y$ is defined as the infimum of $\ep>0$ for which
there exist parameters $\phi:I\to X$ and $\psi:I\to Y$ and a Borel set $I'\subset I$ with $\cL(I')>1-\ep$ and
\[
|d_X (\phi(s), \phi(t)) -d_Y (\psi(s), \psi(t))|<\ep
\]
for any $s, t\in I'$.
\end{defn}

We know that $(\cX, \Box)$ is a metric space, e.g.~\cite[Theorem~4.10]{S}.

The following gives a relation between the box and Prohorov distances.
\begin{lem}[e.g.~{\cite[Proposition~4.12]{S}}]\label{lem;BP}
Let $X$ be a complete separable metric space and $\mu, \nu\in\cP(X)$.
Then we have
\[
\Box((X, \mu), (X, \nu))\le 2\proh(\mu, \nu).
\]
\end{lem}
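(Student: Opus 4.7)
The plan is to derive the inequality from Strassen's theorem, which characterizes the Prohorov distance in terms of couplings on a complete separable metric space. Given $\ep > \proh(\mu, \nu)$, Strassen's theorem provides a coupling $\pi \in \cP(X \times X)$ with first and second marginals $\mu$ and $\nu$ respectively, satisfying
\[
\pi\bigl(\{(x, y) \in X \times X : d_X(x, y) > \ep\}\bigr) \le \ep.
\]

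Equipping $X \times X$ with a product metric that keeps it complete and separable, I next take a Borel parameter $\Phi : I \to X \times X$ of $\pi$ (so that $\Phi_* \cL = \pi$) and define $\phi := \pi_1 \circ \Phi$ and $\psi := \pi_2 \circ \Phi$, where $\pi_i$ denotes the $i$-th coordinate projection. The marginal property of $\pi$ guarantees that $\phi$ and $\psi$ are parameters of $(X, \mu)$ and $(X, \nu)$ respectively. Setting $I' := \{t \in I : d_X(\phi(t), \psi(t)) \le \ep\}$, the choice of $\pi$ yields $\cL(I') \ge 1 - \ep$, and for any $s, t \in I'$ the triangle inequality gives
\[
|d_X(\phi(s), \phi(t)) - d_X(\psi(s), \psi(t))| \le d_X(\phi(s), \psi(s)) + d_X(\phi(t), \psi(t)) \le 2\ep.
\]

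Since the definition of $\Box$ uses strict inequalities, I would then fix an arbitrary $\eta > 0$ and read the preceding estimates against the threshold $\ep'' := 2\ep + \eta$; they become $\cL(I') > 1 - \ep''$ and $|d_X(\phi(s), \phi(t)) - d_X(\psi(s), \psi(t))| < \ep''$ for $s, t \in I'$, so $\Box((X, \mu), (X, \nu)) \le 2\ep + \eta$. Letting $\eta \downarrow 0$ and then $\ep \downarrow \proh(\mu, \nu)$ yields the claim. The only substantive input is Strassen's theorem, whose applicability relies crucially on $X$ being complete and separable; once that is available the rest is a parameterization of the coupling followed by the triangle inequality, so I do not anticipate a serious obstacle.
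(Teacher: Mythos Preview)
Your argument is correct. The paper does not supply its own proof of this lemma: it is stated with a reference to \cite[Proposition~4.12]{S} and used as a black box, so there is no in-paper proof to compare against. Your route via Strassen's theorem is the standard one (and is essentially what underlies the cited proposition), so there is nothing to flag.
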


The following characterizes $\Box$-precompact sets of mm-spaces, cf.~\cite[Corollary~3.20]{KY}.
\begin{lem}[e.g.~{\cite[Lemma~4.28]{S}}]\label{lem;precpt}
Let $\cY\subset\cX$.
Then $\cY$ is $\Box$-precompact if and only if
there exists $\Delta=\Delta(\ep)<\infty$ for any $\ep>0$ such that
any $Y\in\cY$ admits a set $\cK_Y$ of Borel sets in $Y$ such that
\[
\#\cK_Y \le\Delta, \quad
\max_{K\in\cK_Y} \diam K \le\ep, \quad
\diam \bigcup\cK_Y \le\Delta, \quad\text{ and }\quad
\muy(\bigcup\cK_Y)\ge 1-\ep.
\]
\end{lem}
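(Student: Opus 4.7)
The plan is to prove the two directions separately, with finite-atomic approximations as the bridge between the $\Box$-distance and the combinatorial covering condition.

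For the ``only if'' direction, I would show the covering condition is stable under small $\Box$-perturbations and then reduce to a finite net. A single mm-space $Y_0$ satisfies the condition at scale $\ep$ by tightness of $\mu_{Y_0}$: choose a compact $C_0\subset Y_0$ with $\mu_{Y_0}(C_0)\ge1-\ep$ and a finite Borel partition $\cK_{Y_0}$ of $C_0$ into sets of diameter at most $\ep$. For any $Y$ with $\Box(Y,Y_0)<\delta$, fix parameters $\phi:I\to Y$, $\phi_0:I\to Y_0$ and a Borel set $I'\subset I$ with $\cL(I')>1-\delta$ and $|d_Y(\phi(s),\phi(t))-d_{Y_0}(\phi_0(s),\phi_0(t))|<\delta$ on $I'$. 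Set $\tilde K:=\phi_0^{-1}(K)\cap I'\subset I$ and $K':=\overline{\phi(\tilde K)}\subset Y$ for each $K\in\cK_{Y_0}$. Then $K'$ is Borel, $\phi_0(\tilde K)\subset K$ gives $\diam K'\le\ep+\delta$, and $\phi^{-1}(K')\supset\tilde K$ gives $\muy(K')\ge\cL(\tilde K)$; short estimates using $\mu_{Y_0}(C_0)\ge1-\ep$ and $\cL(I')>1-\delta$ then yield $\muy(\bigcup_K K')\ge 1-\ep-\delta$ and $\diam\bigcup_K K'\le\diam C_0+\delta$. Covering the $\Box$-precompact set $\cY$ by a finite $\delta$-net $\{Y_1,\dots,Y_M\}\subset\cY$ and taking the max of the resulting bounds yields the required uniform $\Delta=\Delta(\ep)$, once $\delta$ is chosen small enough in terms of $\ep$.

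For the ``if'' direction, for each $Y\in\cY$ and $\ep>0$ I would build a finite-atomic approximation: refine $\cK_Y$ to a disjoint family, pick representatives $y_K\in K$ and an auxiliary base point $y_*\in Y$, and set $\tilde\mu_Y:=\sum_{K\in\cK_Y}\muy(K)\,\delta_{y_K}+\muy(Y\setminus\bigcup\cK_Y)\,\delta_{y_*}$. Lemma~\ref{lem;P}(1), applied to the Borel map $Y\to Y$ collapsing each $K$ to $y_K$ (and the leftover to $y_*$), yields $\proh(\muy,\tilde\mu_Y)\le\ep$, and Lemma~\ref{lem;BP} then gives $\Box(Y,\tilde Y)\le 2\ep$ for the induced finite mm-space $\tilde Y\in\cX$. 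Each $\tilde Y$ is determined, up to mm-isomorphism, by at most $\Delta+1$ atoms with pairwise distances in $[0,\Delta]$ and a probability weight vector of length at most $\Delta+1$; these data range over a compact subset of a finite-dimensional Euclidean space, and a direct parameter construction (using Lemma~\ref{lem;BP} once more) shows that nearby data give nearby $\Box$-distance. Hence $\{\tilde Y:Y\in\cY\}$ is $\Box$-totally bounded, so $\cY$ itself is, and therefore $\Box$-precompact by completeness of $(\cX,\Box)$.

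The conceptually interesting step, and the one I expect to be the main obstacle, is the claim in the second direction that the family of finite-atomic mm-spaces parameterized by distance matrices with entries in $[0,\Delta]$ and weights in the simplex maps uniformly continuously into $(\cX,\Box)$. This is a direct but careful parameter construction, distinct from the other, more routine, error-bookkeeping estimates needed throughout the argument.
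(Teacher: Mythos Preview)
The paper does not supply its own proof of this lemma: it is quoted as a known fact from Shioya's monograph (Lemma~4.28 there), so there is nothing to compare against directly. Your plan is essentially the standard argument behind that reference, and it is sound.

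Two small points. First, in the ``if'' direction you place the leftover mass at an auxiliary point $y_*\in Y$ and then claim all pairwise distances among the atoms lie in $[0,\Delta]$; but the diameter hypothesis controls only $\bigcup\cK_Y$, not the location of $y_*$. The fix is trivial: take $y_*$ to be one of the already-chosen representatives $y_K$, so that $\tilde Y$ has at most $\Delta$ atoms and diameter at most $\Delta$. Second, the step you flag as the main obstacle---that finite mm-spaces with at most $N$ atoms and diameter at most $D$ form a $\Box$-precompact set, via continuity of the map from (distance matrix, weight vector) to $(\cX,\Box)$---is exactly the content of \cite[Theorem~4.27]{S}, which the present paper also alludes to in Example~\ref{ex;XDN}. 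Your sketch (perturb weights via Lemma~\ref{lem;BP} and total variation, perturb distances via a common parameter) is the right way to carry it out; just note that one should work with pseudo-metrics so that the parameter set is genuinely compact, and identify spaces with different atom counts by padding with zero-mass atoms.
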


We also use a GH counterpart of Lemma~\ref{lem;precpt} in the following form.
\begin{lem}[cf. {\cite[Theorem 7.18]{T}}]\label{lem;GHprecpt}
Let $\cY$ be a set of compact metric spaces.
Then $\cY$ is GH-precompact if and only if $\sup_{Y\in\cY} \diam Y<\infty$ and
there exists $\Delta=\Delta(\ep)<\infty$ for any $\ep>0$ such that
any $Y\in\cY$ admits a set $\cK_Y$ of disjoint Borel sets of $Y$ such that
\[
\#\cK_Y \le\Delta, \quad
\max_{K\in\cK_Y} \diam K \le\ep, \quad\text{ and }\quad
\bigsqcup\cK_Y =Y.
\]
\end{lem}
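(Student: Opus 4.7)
The plan is to deduce both directions from the classical Gromov precompactness theorem for compact metric spaces, which states that $\cY$ is GH-precompact if and only if $\sup_{Y\in\cY}\diam Y<\infty$ and $\cY$ is uniformly totally bounded, i.e., for every $\ep>0$ there exists $N(\ep)<\infty$ such that each $Y\in\cY$ admits an $\ep$-net of cardinality at most $N(\ep)$.

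For the ``only if'' direction, assume $\cY$ is GH-precompact. The uniform diameter bound is immediate from Gromov's theorem. Fix $\ep>0$ and apply the theorem with radius $\ep/2$ to obtain, for each $Y\in\cY$, a finite $\ep/2$-net $\{y_1,\ldots,y_{n_Y}\}\subset Y$ with $n_Y\le N(\ep/2)=:\Delta(\ep)$. Construct a Voronoi-type Borel partition by setting
\[
K_i := B_{\ep/2}(y_i)\setminus\bigcup_{j<i}B_{\ep/2}(y_j),\qquad 1\le i\le n_Y,
\]
and let $\cK_Y$ be the collection of those $K_i$ which are nonempty. These sets are Borel as differences of closed balls, pairwise disjoint by construction, cover $Y$ because the $y_i$ form an $\ep/2$-net, have diameter at most $\ep$, and their number is at most $\Delta(\ep)$.

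For the ``if'' direction, assume the partition condition. Fix $\ep>0$ and, for each $Y\in\cY$ and each $K\in\cK_Y$, pick an arbitrary point $y_K\in K$ (possible since $K\ne\emptyset$ is forced by $\bigsqcup\cK_Y=Y$ once we discard empty pieces). For any $y\in Y$, the unique $K$ containing $y$ satisfies $d_Y(y,y_K)\le\diam K\le\ep$, so $\{y_K:K\in\cK_Y\}$ is an $\ep$-net of $Y$ with at most $\Delta(\ep)$ points. Combined with the hypothesis $\sup_{Y\in\cY}\diam Y<\infty$, Gromov's precompactness theorem yields GH-precompactness of $\cY$.

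The proof is essentially routine; the only mild care point is Borel measurability of the pieces in the forward direction, which is automatic for the Voronoi-type construction. If an exactly disjoint partition is undesirable to build by hand, one may alternatively cite \cite[Theorem~7.18]{T} directly and merely translate between $\ep$-nets and $\cK_Y$-style partitions as above.
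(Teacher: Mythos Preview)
Your argument is correct. The paper does not supply its own proof of this lemma: it is stated in the preliminaries with only a ``cf.'' reference to \cite[Theorem~7.18]{T}, so there is nothing substantive to compare against. Your proof is the standard derivation from Gromov's precompactness criterion (uniform diameter bound plus uniform total boundedness), and your closing remark already anticipates the paper's actual choice, namely to cite Tuzhilin and leave the translation between $\ep$-nets and Borel partitions implicit.
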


\begin{defn}[e.g.~\cite{S}, \cite{KY}]\label{def;precep}
Let $X$ and $Y$ be mm-spaces, $\cY\subset\cX$ a subset, and $\ep\ge 0$.

We say that $X$ \emph{dominates} $Y$ and write $Y\prec X$ if there exists a $1$-Lipschitz map $f:X\to Y$ with $f_* \mux=\muy$.

We write $\cY\prec X$ if $Y\prec X$ for any $Y\in\cY$.

We write $Y\prec_\ep X$ if
there exists a Borel set $\tX\subset X$, called a \emph{nonexceptional domain}, and
a Borel map $f:X\to Y$ such that
\begin{enumerate}
\item $\mux(\tX)\ge 1-\ep$,
\item $d_Y (f(x), f(x')) \le d_X (x, x') +\ep$ for any $x, x'\in \tX$, and
\item $\proh(f_* \mux, \muy) \le\ep$.
\end{enumerate}
\end{defn}

\begin{rem}\label{rem;prec}
We know that $\prec$ is a partial order on $\cX$ by e.g. \cite[Proposition~2.11]{S}.

Let $X$ and $Y$ be mm-spaces.
We know that $X, Y\prec X\times Y$, where the product $X\times Y$ is equipped with a product metric and the product measure,
e.g.~\cite[Paragraph below Definition~6.3]{S}.
If $\Box(X, Y)<\ep$, then $Y\prec_{3\ep} X$ by e.g.~\cite[Lemma 4.22]{S}.
The following conditions are equivalent by e.g. Lemma~\ref{lem;precep}:
\begin{enumerate}
\item $Y\prec X$.
\item $Y\prec_0 X$.
\item $Y\prec_\ep X$ for all $\ep>0$.
\end{enumerate}
\end{rem}

The following two lemmas involve the box distance $\Box$ and the Lipschitz order $\prec$.
\begin{lem}[{\cite[Lemma~6.10]{S}}]\label{lem;dominatedsubconv}
Let $\seqn{Y_n}$, $\seqn{Z_n}$, and $\seqn{W_n}$ be sequences of mm-spaces.
Suppose that $\seqn{Y_n}$ and $\seqn{Z_n}$ $\Box$-converge to mm-spaces $Y$ and $Z$ respectively and that $Y_n, Z_n \prec W_n$ for any $n$.
Then there exists a sequence $\seqn{X_n}$ of mm-spaces which satisfies $Y_n, Z_n \prec X_n \prec W_n$ for any $n$ and has a $\Box$-convergent subsequence.
\end{lem}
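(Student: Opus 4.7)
The plan is to realize each $X_n$ as the joint distribution of the two dominating maps from $W_n$ to $Y_n$ and $Z_n$, equipped with an appropriate product metric, and then to derive $\Box$-precompactness of $\seqn{X_n}$ from Lemma~\ref{lem;precpt}. Concretely, for each $n$ I would fix $1$-Lipschitz maps $f_n:W_n\to Y_n$ and $g_n:W_n\to Z_n$ satisfying $(f_n)_*\mu_{W_n}=\mu_{Y_n}$ and $(g_n)_*\mu_{W_n}=\mu_{Z_n}$, guaranteed by $Y_n,Z_n\prec W_n$. I would equip the product $Y_n\times Z_n$ with the $\ell^\infty$ product metric
\[
d_\infty((y,z),(y',z')):=\max\bigl(d_{Y_n}(y,y'),\,d_{Z_n}(z,z')\bigr),
\]
set $h_n:=(f_n,g_n):W_n\to Y_n\times Z_n$, and define $X_n$ as the mm-space supported on $\supp(h_n)_*\mu_{W_n}\subset(Y_n\times Z_n,d_\infty)$ with measure $(h_n)_*\mu_{W_n}$. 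With the $\ell^\infty$ choice the pair map $h_n$ is $1$-Lipschitz, and so are the two coordinate projections $\pi_{Y_n},\pi_{Z_n}$; moreover these projections push $\mu_{X_n}$ forward to $\mu_{Y_n}$ and $\mu_{Z_n}$. Hence $Y_n,Z_n\prec X_n\prec W_n$ for every $n$.

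Next I would show that $\seqn{X_n}$ is $\Box$-precompact, so that it admits a $\Box$-convergent subsequence. Since $\seqn{Y_n}$ and $\seqn{Z_n}$ are $\Box$-convergent they are a fortiori $\Box$-precompact, and so given $\ep>0$ Lemma~\ref{lem;precpt} furnishes constants $\Delta_Y(\ep),\Delta_Z(\ep)<\infty$ and, for each $n$, finite families $\cK_{Y_n}$ and $\cK_{Z_n}$ of Borel subsets of $Y_n$ and $Z_n$ satisfying the stated cardinality, individual-diameter ($\le\ep$), total-diameter, and mass ($\ge 1-\ep$) bounds. Setting
\[
\cK_{X_n}:=\bigl\{K\times K':K\in\cK_{Y_n},\ K'\in\cK_{Z_n}\bigr\},
\]
the cardinality of $\cK_{X_n}$ is at most $\Delta_Y(\ep)\Delta_Z(\ep)$, each member has $d_\infty$-diameter at most $\ep$, and the $d_\infty$-diameter of $\bigcup\cK_{X_n}$ is at most $\max(\Delta_Y(\ep),\Delta_Z(\ep))$. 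Finally, by the pushforward identities and a union bound,
\[
\mu_{X_n}\Bigl(\bigcup\cK_{X_n}\Bigr)=\mu_{W_n}\Bigl(f_n^{-1}\bigl(\textstyle\bigcup\cK_{Y_n}\bigr)\cap g_n^{-1}\bigl(\textstyle\bigcup\cK_{Z_n}\bigr)\Bigr)\ge 1-2\ep,
\]
so Lemma~\ref{lem;precpt} yields the desired $\Box$-precompactness of $\seqn{X_n}$.

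The main delicate point is the choice of product metric on $Y_n\times Z_n$: it must be $\ell^\infty$ rather than an $\ell^1$-type sum metric, so that the diagonal map $h_n$ \emph{and} both coordinate projections are simultaneously $1$-Lipschitz. Once this is in place, everything else is routine bookkeeping using the pushforward properties of $f_n$ and $g_n$. I note in passing that the $\Box$-convergence of $\seqn{Y_n}$ and $\seqn{Z_n}$ is used only through the weaker implication of $\Box$-precompactness; the limits $Y$ and $Z$ themselves never enter the argument, which is consistent with the conclusion asserting only the existence of a $\Box$-convergent subsequence of $\seqn{X_n}$.
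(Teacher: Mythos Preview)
Your proof is correct and follows essentially the same construction as the one the paper implicitly relies on. The paper does not prove this lemma directly (it is quoted from \cite[Lemma~6.10]{S}), but the proof of Lemma~\ref{lem;cptdominate} in the paper---which explicitly cross-references the proof of Lemma~\ref{lem;dominatedsubconv}---uses precisely your idea: the pseudo-metric $d'_{W}(w,w')=\max_{V=Y,Z} d_V(f_V(w),f_V(w'))$ followed by passage to the quotient, which is mm-isomorphic to your $(h_n)_*W_n$ inside $(Y_n\times Z_n,d_\infty)$. Your verification of $\Box$-precompactness via the product family $\cK_{X_n}=\{K\times K'\}$ and Lemma~\ref{lem;precpt} is exactly the standard route.
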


\begin{lem}[cf.~{\cite[Lemma~4.39]{S}, \cite[Proposition~3.15]{KY}}]\label{lem;precep}
Let $\seqn{\ep_n}$ be a sequence of positive numbers with $\ep_n \to 0$ as $\nti$.
If sequences $\seqn{X_n}$ and $\seqn{Y_n}$ of mm-spaces $\Box$-converge to mm-spaces $X$ and $Y$ respectively and $Y_n \prec_{\ep_n} X_n$ for any $n$,
then $Y\prec X$.
\end{lem}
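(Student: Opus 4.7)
The plan is to verify that $Y\prec_\ep X$ for every $\ep>0$, which is equivalent to $Y\prec X$ by Remark~\ref{rem;prec}. So I fix $\ep>0$ and a small parameter $\eta=\eta(\ep)>0$, to be specified at the end. For all $n$ sufficiently large one has $\Box(X_n,X)<\eta$, $\Box(Y_n,Y)<\eta$, and $\ep_n<\eta$ simultaneously; fix such an $n$. Applying Remark~\ref{rem;prec} (with the symmetry of $\Box$) to the first inequality yields a Borel map $\alpha:X\to X_n$ realizing $X_n\prec_{3\eta}X$, with some nonexceptional domain $\tX\subset X$; similarly the third inequality gives $\gamma:Y_n\to Y$ realizing $Y\prec_{3\eta}Y_n$ with nonexceptional domain $\tY_n\subset Y_n$. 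The hypothesis $Y_n\prec_{\ep_n}X_n$ supplies a Borel map $\beta:X_n\to Y_n$ with nonexceptional domain $\tX_n\subset X_n$. The candidate witness for $Y\prec_\ep X$ is the composition $F:=\gamma\circ\beta\circ\alpha:X\to Y$.

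The almost-Lipschitz condition is straightforward: on the intersection $\tX\cap\alpha^{-1}(\tX_n)\cap\alpha^{-1}(\beta^{-1}(\tY_n))$ one chains the three $\prec_\ep$ inequalities to obtain a defect of at most $3\eta+\ep_n+3\eta\le 7\eta$. For the pushforward condition I plan to apply the triangle inequality
\[
\proh(F_*\mux,\muy)\le\proh(F_*\mux,(\gamma\circ\beta)_*\mu_{X_n})+\proh((\gamma\circ\beta)_*\mu_{X_n},\gamma_*\mu_{Y_n})+\proh(\gamma_*\mu_{Y_n},\muy),
\]
with the first two summands controlled by iterated use of Lemma~\ref{lem;P}(2) together with the Prohorov bounds $\proh(\alpha_*\mux,\mu_{X_n})\le 3\eta$ and $\proh(\beta_*\mu_{X_n},\mu_{Y_n})\le\ep_n$, and the third summand bounded by $3\eta$ by definition of $\gamma$.

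The main obstacle is that $\alpha^{-1}(\tX_n)$, and then $\alpha^{-1}(\beta^{-1}(\tY_n))$, need not carry large $\mux$-mass, since $\alpha_*\mux$ and $\mu_{X_n}$ (resp.\ $(\beta\circ\alpha)_*\mux$ and $\mu_{Y_n}$) are close only in the Prohorov distance and not in total variation. My fix is to replace $\tX_n$ by its closed $3\eta$-neighbourhood $B_{3\eta}(\tX_n)$, for which $\alpha_*\mux(B_{3\eta}(\tX_n))\ge 1-3\eta-\ep_n$ by definition of $\proh$, and then to redefine $\beta$ on $B_{3\eta}(\tX_n)\setminus\tX_n$ by composition with a Borel selection of nearest points in $\tX_n$ (available since $X_n$ is complete separable). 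This inflates the Lipschitz defect of $\beta$ on the enlarged domain by $O(\eta)$, and Lemma~\ref{lem;P}(1) shows that the pushforward estimate for the modified $\beta$ differs from the original by at most $O(\eta)$. Repeating the same enlargement device for $\tY_n$ produces a nonexceptional domain for $F$ of $\mux$-measure at least $1-O(\eta)$ on which all three conditions of $\prec_\ep$ hold with errors of order $\eta$. Choosing $\eta$ small enough in terms of $\ep$ then gives $Y\prec_\ep X$, and letting $\ep\downarrow 0$ yields $Y\prec X$ by Remark~\ref{rem;prec}.
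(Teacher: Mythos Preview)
The paper does not supply its own proof of this lemma; it is quoted with references to \cite[Lemma~4.39]{S} and \cite[Proposition~3.15]{KY}. So there is nothing to compare against directly, and the question is simply whether your argument is sound.

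Your outline is correct. The composition $F=\gamma\circ\beta\circ\alpha$ is the right candidate, you correctly identify the obstruction (Prohorov closeness does not transfer mass to preimages), and your remedy---enlarge the nonexceptional domain to a closed $O(\eta)$-neighbourhood and precompose with a projection---works, with all errors staying $O(\eta)$. One technical point: a Borel \emph{exact} nearest-point selection onto a Borel set $\tX_n$ need not exist (the set is not assumed closed, and even then nearest points may fail to exist or be unique). You should instead use a Borel $\delta$-approximate nearest-point map as in \cite[Lemma~3.5]{S}, which the paper itself invokes in the proofs of Lemmas~\ref{lem;order} and the lemma following it; this costs only an extra $O(\delta)$ which you can absorb into $\eta$.

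For comparison, the paper's Lemma~\ref{lem;order} packages the same composition-with-enlargement idea more systematically, but via McShane extension of each component function to a global $1$-Lipschitz map on $\ell^\infty$ rather than by domain enlargement; either device handles the mass-transfer issue. A slicker route, using tools the paper \emph{does} prove, is to apply Lemma~\ref{lem;K} to each $Y_n\prec_{\ep_n}X_n$ and obtain $Z_n\prec X_n$ with $\Box(Y_n,Z_n)\le 3\ep_n$; then $Z_n\to Y$ in $\Box$, and one is reduced to the case $\ep_n=0$, which is exactly \cite[Lemma~4.39]{S}.
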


\begin{defn}[{\cite[Definitions~6.3, 6.4]{S}}]\label{def;pyramid}
A \emph{pyramid} is a nonempty closed set $\cP$ in $(\cX, \Box)$ with the following properties:
\begin{enumerate}
\item If $X\in\cP$ and $Y\in\cX$ satisfy $Y\prec X$, then $Y\in\cP$.
\item If $Y, Z\in\cP$, then there exists $X\in\cP$ with $Y, Z\prec X$.
\end{enumerate}

We say that a sequence $\seqn{\cP_n}$ of pyramids \emph{converges weakly} to a pyramid $\cP$ if
\begin{enumerate}
\item $\lim_{n\to\infty} \Box(X, \cP_n)=0$ for any $X\in\cP$ and
\item $\liminf_{n\to\infty} \Box(Y, \cP_n)>0$ for any $Y\in\cX\setminus\cP$.
\end{enumerate}
\end{defn}

Typical examples of pyramids are $\cX$ and
\[
\cP_X := \{Y\in\cX:Y\prec X\}
\]
for an mm-space $X$.

\begin{thm}[{\cite[Theorem~6.22]{S}}]\label{thm;rho}
There exists a metric $\rho$ which makes the set of all pyramids $\Pi$ a compact metric space $(\Pi, \rho)$ and
which is compatible the weak convergence of pyramids.
\end{thm}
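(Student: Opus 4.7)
The plan is to construct $\rho$ via a countable dense subset of $(\cX, \Box)$ and to establish compactness through a diagonal extraction combined with the $\prec$-stability lemmas already available.

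First, fix a countable dense family $\seqk{X_k}$ in the separable space $(\cX, \Box)$ and set
\[
\rho(\cP, \cQ) := \sum_{k=1}^\infty 2^{-k} \min\bigl( 1, \, |\Box(X_k, \cP) - \Box(X_k, \cQ)| \bigr).
\]
The triangle inequality is immediate. For positive definiteness, note that $X \mapsto \Box(X, \cP)$ is $1$-Lipschitz, so $\rho(\cP, \cQ) = 0$ forces $\Box(X, \cP) = \Box(X, \cQ)$ for all $X \in \cX$ by density; since both pyramids are $\Box$-closed, this gives $\cP = \cQ$.

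Next, I would show that $\rho$-convergence is equivalent to weak convergence. The key point is that, for each fixed $X \in \cX$, weak convergence $\cP_n \to \cP$ implies $\Box(X, \cP_n) \to \Box(X, \cP)$. The upper bound follows by approximating $X$ by a point $Z \in \cP$ nearly realizing $\Box(X, \cP)$ and applying condition~(1) to $Z$; the lower bound follows from condition~(2), since any subsequential limit of near-projections of $X$ onto $\cP_n$ must lie in the closed set $\cP$. Dominated convergence plus density of $\seqk{X_k}$ then yield both directions of the equivalence.

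The main step is compactness. Given $\seqn{\cP_n}$, a diagonal argument yields a subsequence (not relabeled) along which $\Box(X_k, \cP_n)$ converges for every $k$. Define
\[
\cP := \{ X \in \cX : \Box(X, \cP_n) \to 0 \text{ as } \nti \}.
\]
This set is automatically $\Box$-closed. To check it is a pyramid, the hereditary property can be handled by a parameter-transport argument: given $Y \prec X \in \cP$ and $X_n \in \cP_n$ with $X_n \to X$, construct $Y_n \in \cP_n$ with $Y_n \prec X_n$ and $Y_n \to Y$, using $\Box$-closeness of $X_n$ to $X$ together with Lemma~\ref{lem;precep} and Remark~\ref{rem;prec}. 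For directedness, given $Y, Z \in \cP$ with approximants $Y_n, Z_n \in \cP_n$, use directedness of each $\cP_n$ to pick upper bounds $W_n \in \cP_n$; Lemma~\ref{lem;dominatedsubconv} then produces $X_n \in \cP_n$ with $Y_n, Z_n \prec X_n$ along a $\Box$-convergent subsequence, and Lemma~\ref{lem;precep} shows the limit $X$ lies in $\cP$ and satisfies $Y, Z \prec X$. Weak convergence $\cP_n \to \cP$ then follows from the construction: condition~(1) is immediate, and condition~(2) holds because $Y \notin \cP$ means $\liminf_n \Box(Y, \cP_n) > 0$ by the diagonal convergence property.

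The main obstacle I anticipate is the directedness of the limit $\cP$: the sequence $\seqn{W_n}$ produced by directedness in each $\cP_n$ need not be $\Box$-precompact on its own, so the full strength of Lemma~\ref{lem;dominatedsubconv} (which extracts a $\Box$-convergent sequence of common dominators between two known convergent sequences) is genuinely needed here. Verifying the hereditary property also demands some care in transferring a domination $Y \prec X$ to nearby approximants, but the essential analytic content of the proof is concentrated in directedness.
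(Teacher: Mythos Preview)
The paper does not prove this theorem: it is stated as a citation of \cite[Theorem~6.22]{S} and used as a black box, with no proof or sketch provided. There is therefore nothing in the paper to compare your attempt against.

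That said, your outline is broadly sound and follows the natural strategy. Two points deserve care. First, in your direct argument that weak convergence implies $\rho$-convergence, the lower bound $\liminf_n \Box(X, \cP_n) \ge \Box(X, \cP)$ relies on extracting a $\Box$-convergent subsequence from near-projections $Z_n \in \cP_n$ with $\Box(X, Z_n)$ bounded; but $\Box$-balls in $\cX$ are not precompact, so this step is not free. The cleaner route is to first establish sequential compactness of $(\Pi, \rho)$ together with the implication ($\rho$-convergence $\Rightarrow$ weak convergence), and then deduce the reverse implication from uniqueness of weak limits: if $\cP_n \to \cP$ weakly but not in $\rho$, pass to a $\rho$-convergent subsequence and obtain a second weak limit, a contradiction. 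Second, in the directedness step for the limit set $\cP$, once Lemma~\ref{lem;dominatedsubconv} produces $X_{n_k} \to X$ with $X_{n_k} \in \cP_{n_k}$, the conclusion $X \in \cP$ comes from the fact that $\Box(X, \cP_n)$ already converges along the full (diagonally extracted) sequence, so its vanishing along the subsequence $n_k$ forces the full limit to be zero; Lemma~\ref{lem;precep} is what gives $Y, Z \prec X$, not membership in $\cP$.
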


Following \cite{NS},
we let $\cH$ denote the set of all isometry classes of compact metric spaces and equip it with the GH distance $\GH$,
but we do not distinguish a compact metric space and its isometry class in $\cH$.

\begin{defn}[{\cite[Definition 4.3]{KY}}, cf.~\cite{NS}]\label{def;GHprec}
Let $X$ and $Y$ be metric spaces and $\ep\ge 0$.

We write $Y\prec X$ if there exists a $1$-Lipschitz surjection $f:X\to Y$.

We write $Y\prec_\ep X$ if there exists a map $f:X\to Y$ such that
\begin{enumerate}
\item $d_Y (f(x), f(x')) \le d_X (x, x') +\ep$ for any $x, x' \in X$ and
\item $d_Y (y, f(X)) \le\ep$ for any $y\in Y$.
\end{enumerate}
\end{defn}

The following is a GH analog of Lemma~\ref{lem;precep}.
\begin{lem}[e.g. {\cite[Lemma~4.5]{KY}}]\label{lem;GHprecep}
Let $\seqn{\ep_n}$ be a sequence of positive numbers with $\ep_n \to 0$ as $\nti$.
If sequences $\seqn{X_n}$ and $\seqn{Y_n}$ of compact metric spaces GH-converge to compact metric spaces $X$ and $Y$ respectively and $Y_n \prec_{\ep_n} X_n$ for any $n$,
then $Y\prec X$.
\end{lem}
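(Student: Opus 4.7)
The plan is to build the desired $1$-Lipschitz surjection $X \to Y$ as a subsequential limit of the given maps $f_n \colon X_n \to Y_n$, transported through approximate isometries coming from the two GH-convergences. This is in spirit the folklore fact that limits of approximate isometries are isometries, adapted to the $\prec_{\ep_n}$ setting.

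First I would use GH-convergence to produce, for each $n$, maps $\phi_n \colon X \to X_n$ and $\psi_n \colon Y_n \to Y$ that distort distances by at most $\delta_n$ and have $\delta_n$-dense image, with $\delta_n \to 0$ (e.g.\ via $\delta_n$-isometries, cf.~\cite{T}). Setting $g_n := \psi_n \circ f_n \circ \phi_n \colon X \to Y$, a short three-term estimate combining the two distortion bounds with the approximate $1$-Lipschitz property and $\ep_n$-surjectivity of $f_n$ from $Y_n \prec_{\ep_n} X_n$ shows that there exists $\eta_n \to 0$ with
\[
d_Y(g_n(x), g_n(x')) \le d_X(x, x') + \eta_n \qquad \text{for all } x, x' \in X,
\]
and with $g_n(X)$ being $\eta_n$-dense in $Y$.

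Next I would extract a limit $g \colon X \to Y$ by an Arzel\`a--Ascoli type argument. Fix a countable dense subset $\{x_k\}_{k=1}^{\infty} \subset X$. Compactness of $Y$ and a standard diagonal extraction yield a subsequence along which $g_n(x_k)$ converges for every $k$; call the limit $g(x_k)$. Passing to the limit in the displayed inequality gives $d_Y(g(x_k), g(x_l)) \le d_X(x_k, x_l)$, so $g$ extends uniquely to a $1$-Lipschitz map $g \colon X \to Y$. A standard equi-continuity argument using total boundedness of $X$ then upgrades pointwise convergence on $\{x_k\}$ to uniform convergence $g_n \to g$ on $X$.

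Finally, for surjectivity, the $\eta_n$-density gives, for each $y \in Y$, a point $x_n \in X$ with $d_Y(g_n(x_n), y) \le \eta_n$; compactness of $X$ supplies a subsequence with $x_n \to x^* \in X$, and uniform convergence combined with continuity of $g$ yields $g_n(x_n) \to g(x^*)$, whence $g(x^*) = y$. Thus $g$ is a $1$-Lipschitz surjection and $Y \prec X$ in the sense of Definition~\ref{def;GHprec}. The only delicate point is careful bookkeeping of the three error parameters $\delta_n$, $\ep_n$, and $\eta_n$ through the composition; the fact that the $f_n$ need not be continuous is harmless, since the approximate $1$-Lipschitz inequality supplies all the equicontinuity needed to run Arzel\`a--Ascoli.
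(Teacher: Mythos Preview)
Your argument is correct. The composition $g_n=\psi_n\circ f_n\circ\phi_n$ indeed satisfies $d_Y(g_n(x),g_n(x'))\le d_X(x,x')+\eta_n$ with $\eta_n\to 0$, and your bookkeeping shows $g_n(X)$ is $\eta_n$-dense in $Y$; the diagonal extraction on a countable dense set, the extension to a $1$-Lipschitz limit $g$, the upgrade to uniform convergence via total boundedness of $X$ and the \emph{eventual} equicontinuity inequality $d_Y(g_n(x),g_n(x'))\le d_X(x,x')+\eta_n$, and the surjectivity argument all go through as written. The remark that discontinuity of the $f_n$ is harmless is well placed: only the approximate $1$-Lipschitz bound is used.

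As for comparison with the paper: the present paper does not give an in-line proof of this lemma but cites it from \cite[Lemma~4.5]{KY}. Your approach---transporting the $f_n$ through $\delta_n$-isometries and passing to a subsequential uniform limit via an Arzel\`a--Ascoli/diagonal scheme---is the natural and standard one for statements of this type, so there is nothing to contrast.
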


\begin{defn}[cf. \cite{NS}]\label{def;GHpyramid}
We call a nonempty closed set $\cP$ in $(\cH, \GH)$ a \emph{pyramid} (of compact metric spaces) if
\begin{enumerate}
\item $Y\in\cP$ provided that $X\in\cP$ and $Y\prec X$ and
\item there exists $X\in\cP$ with $Y, Z\prec X$ provided that $Y, Z\in\cP$.
\end{enumerate}
\end{defn}

By Lemma~\ref{lem;GHprecep}, the set
\[
\cP_X :=\{Y\in\cH:Y\prec X\}
\]
is a pyramid for any $X\in\cH$.

\begin{rem}\label{rem;NS}
Nakajima--Shioya~\cite{NS} introduced a relation $\precsim$ between extended metric spaces
and defined pyramids of compact metric spaces which are different from ours in Definition~\ref{def;GHpyramid}.
For compact metric spaces $X$ and $Y$,
we know that $Y\precsim X$ if and only if there exists a map $f:Y\to X$ which is expanding, i.e.,
\[
d_Y (y, y') \le d_X (f(y), f(y'))
\]
for any $y, y' \in Y$, cf.~\cite[Lemma~3.1]{NS}.

We note that
\begin{itemize}
\item $Y\prec X$ implies $Y\precsim X$ and
\item $Y\precsim X$ implies that there exists a subset $X' \subset X$ with $Y\prec X'$
\end{itemize}
for compact metric spaces $X$ and $Y$.
\end{rem}

\section{Auxiliary lemmas}\label{sec;lem}
In this section, we prove several lemmas which we need in the following sections.
We equip the set $\R^\infty$ with the sup norm $\|\cdot\|_\infty$.

\begin{lem}\label{lem;abcd}
Let $a, b, c, d\in\R$ and $\ep>0$.
Suppose that $0\le a-c\le\ep$ and $0\le b-d\le\ep$.
Then
\[
| |a-b|-|c-d| |\le\ep.
\]
\end{lem}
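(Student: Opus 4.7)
The plan is to reduce the inequality to a single application of the reverse triangle inequality after a translation. Introduce the slack variables $s:=a-c$ and $t:=b-d$, which by hypothesis lie in $[0,\ep]$. Then $a-b=(c-d)+(s-t)$, so taking absolute values and applying the reverse triangle inequality $||u+v|-|u||\le|v|$ with $u=c-d$ and $v=s-t$ gives
\[
\bigl||a-b|-|c-d|\bigr|=\bigl||(c-d)+(s-t)|-|c-d|\bigr|\le|s-t|.
\]
Since $s,t\in[0,\ep]$, clearly $|s-t|\le\ep$, which yields the conclusion.

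There is no real obstacle: the content of the hypothesis $0\le a-c\le\ep$ and $0\le b-d\le\ep$ is precisely that $s$ and $t$ are comparable scalars in $[0,\ep]$, and the point of the lemma is that the absolute values of a common shift of two real numbers $c,d$ by $s,t$ respectively can change the quantity $|c-d|$ only by at most $|s-t|$. A case analysis on the signs of $a-b$ and $c-d$ would also work but is considerably more tedious; in particular the crossed case where $a\ge b$ but $c\le d$ requires one to observe that the inequalities force $c\le d\le b\le a\le c+\ep$, so that $d-c\le\ep$ and $a-b\le\ep$ both hold. The translation-by-$(s,t)$ formulation sidesteps this bookkeeping entirely, so I would present only the short argument above.
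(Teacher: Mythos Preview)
Your proof is correct and considerably slicker than the paper's. The paper proceeds by case analysis: after assuming $a\ge b$ without loss of generality, it distinguishes three cases according to the signs of $(a-c)-(b-d)$ and of $c-d$, and in each case computes $||a-b|-|c-d||$ explicitly and bounds it by either $a-c$ or $b-d$. Your substitution $a-b=(c-d)+(s-t)$ followed by a single application of the reverse triangle inequality replaces that entire case split and yields the sharper intermediate bound $|s-t|=|(a-c)-(b-d)|$ in one stroke. The case analysis makes explicit which hypothesis is active in each regime, but your argument is clearly the cleaner one and is the proof I would recommend keeping.
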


\begin{proof}
We may assume that $a\ge b$.

If $a-c\le b-d$, then $0\le a-b\le c-d$ and
\[
| |a-b|-|c-d| | = (c-d)-(a-b) = (b-d)-(a-c) \le b-d \le \ep.
\]

If $a-c\ge b-d$ and $c\ge d$, then $a-b\ge c-d\ge 0$ and
\[
| |a-b|-|c-d| | = (a-b)-(c-d) = (a-c)-(b-d) \le a-c\le\ep.
\]

If $c<d$, then $a-d\ge a-b\ge 0$, $b-c\ge d-c>0$, and
\[
| |a-b|-|c-d| | = |(a-d)-(b-c)| \le \max\{ a-d, b-c \} \le a-c\le\ep.
\]

Now the proof is complete.
\end{proof}

\begin{defn}
For an mm-space $X$, a complete metric space $Y$, and a Borel map $f:X\to Y$,
we define an mm-space $f_* X :=(\supp f_* \mux, d_Y, f_* \mux)$.
\end{defn}

\begin{rem}\label{rem;f*X}
Suppose that $X$ is an mm-space, $Y$ and  $Z$ are complete metric spaces, $f:X\to Y$ and $g:X\to Z$ are Borel maps, and $\ep>0$.

If $f$ is $1$-Lipschitz, then $f_* X \prec X$.

If $\tX\subset X$ is a Borel set with $\mux(\tX)\ge 1-\ep$ and
\[
|d_Y (f(x), f(x'))-d_Z (g(x), g(x'))|\le\ep
\]
for any $x, x' \in\tX$, then $\Box(f_* X, g_* X)\le\ep$.
\end{rem}

\begin{lem}[cf. {\cite[Lemma~4.6]{K}}]\label{lem;K}
Let $\ep>0$.
If $X$ and $Y$ are mm-spaces with $Y \prec_\ep X$,
then there exists an mm-space $Z$ with $Z\prec X$ and $\Box(Y, Z) \le 3\ep$.
\end{lem}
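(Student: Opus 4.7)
The plan is to modify the almost-$1$-Lipschitz Borel map $f : X \to Y$ witnessing $Y \prec_\ep X$ into a genuine $1$-Lipschitz map $g : X \to W$ into a common ambient space $W$, and take $Z := g_*X$. I would use the Kuratowski-type isometric embedding $\iota : \supp \muy \hookrightarrow W := \ell^\infty$ centered at $f(x_0)$ for a fixed base point $x_0 \in \tX$, where $\tX$ is nonempty since $\mux(\tX) \ge 1 - \ep > 0$ (if $\ep \ge 1$ the lemma is trivial by taking $Z := X$). Each coordinate $h_n := (\iota \circ f)_n : X \to \R$ then satisfies $|h_n(x) - h_n(x')| \le d_X(x, x') + \ep$ for $x, x' \in \tX$. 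I would apply an \emph{averaged} McShane-type extension by setting
\[
\tilde h_n^+(x) := \inf_{x' \in \tX}\bigl(h_n(x') + d_X(x, x')\bigr), \qquad \tilde h_n^-(x) := \sup_{x' \in \tX}\bigl(h_n(x') - d_X(x, x')\bigr),
\]
and $g_n := (\tilde h_n^+ + \tilde h_n^-)/2$. A short check yields $h_n - \ep \le \tilde h_n^+ \le h_n \le \tilde h_n^- \le h_n + \ep$ on $\tX$, so each $g_n$ is $1$-Lipschitz on $X$ and satisfies $|g_n - h_n| \le \ep/2$ on $\tX$. Assembled coordinatewise, $g := (g_n)_n : X \to W$ is $1$-Lipschitz (well-defined in $\ell^\infty$ because $|g_n(x)| \le d_X(x, x_0) + \ep$ uniformly in $n$), with $\|g(x) - \iota f(x)\|_\infty \le \ep/2$ for $x \in \tX$. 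Setting $Z := g_*X$ then gives $Z \prec X$ by Remark~\ref{rem;f*X}.

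For the bound $\Box(Y, Z) \le 3\ep$ I would work directly with parameters. Using $\proh(f_*\mux, \muy) \le \ep$ and Strassen's theorem, choose a coupling $\pi_1 \in \cP(Y \times Y)$ of $f_*\mux$ and $\muy$ giving mass $\le \ep$ to $\{d_Y > \ep\}$, disintegrate it along the first factor, and paste with $\mux$ through $f$ to produce a coupling $\pi \in \cP(X \times Y)$ of $\mux$ and $\muy$ with $\pi(\{(x, y) : d_Y(f(x), y) > \ep\}) \le \ep$. Parametrize $\pi$ by a Borel $\gamma : I \to X \times Y$ and set $\phi_X := \pi_X \circ \gamma$, $\phi_Y := \pi_Y \circ \gamma$; these are parameters of $X$ and $Y$ satisfying $d_Y(f\phi_X(s), \phi_Y(s)) \le \ep$ on $I_1 \subset I$ of $\cL$-measure $\ge 1 - \ep$. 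With $\phi_Z := g \circ \phi_X$ (a parameter of $Z \subset W$) and $I' := \phi_X^{-1}(\tX) \cap I_1$, which has $\cL(I') \ge 1 - 2\ep$, two reverse-triangle inequalities in $\ell^\infty$ and in $Y$, combined with the isometry of $\iota$, yield
\[
\bigl|\|g\phi_X(s) - g\phi_X(t)\|_\infty - d_Y(\phi_Y(s), \phi_Y(t))\bigr| \le 2\cdot\tfrac{\ep}{2} + 2\ep = 3\ep
\]
for $s, t \in I'$. Since also $\cL(I \setminus I') \le 2\ep \le 3\ep$, this gives $\Box(Y, Z) \le 3\ep$.

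The main obstacle is calibrating the constant to be exactly $3\ep$: a naive one-sided McShane extension yields only $|g_n - h_n| \le \ep$ on $\tX$, contributing $2\ep$ instead of $\ep$ to the displayed estimate and forcing the weaker bound $\Box(Y, Z) \le 4\ep$. The averaging of $\tilde h_n^+$ and $\tilde h_n^-$ is the crucial trick that halves the pointwise defect to $\ep/2$ and balances the bookkeeping at $\ep + 2\ep = 3\ep$. A secondary, essentially routine technical issue is the measurable pasting of the Prohorov-optimal coupling of $f_*\mux$ and $\muy$ with $\mux$ through the map $f$, which is justified by the disintegration theorem on complete separable metric spaces.
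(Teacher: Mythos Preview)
Your proof is correct and reaches the same constant $3\ep$, but the mechanism differs from the paper's in two independent places.

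\textbf{Lipschitzification.} The paper uses only the one-sided McShane envelope $g_n(x)=\inf_{\tX}[f_n(\cdot)+d_X(x,\cdot)]$, which gives merely $0\le f_n-g_n\le\ep$ on $\tX$. Instead of losing a factor via the triangle inequality, it invokes the elementary Lemma~\ref{lem;abcd}: if $0\le a-c\le\ep$ and $0\le b-d\le\ep$ then $\bigl||a-b|-|c-d|\bigr|\le\ep$. Applied coordinatewise this yields directly
\[
\bigl|\,\|\iota f(x)-\iota f(x')\|_\infty-\|g(x)-g(x')\|_\infty\,\bigr|\le\ep
\]
for $x,x'\in\tX$. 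Your averaged envelope $g_n=(\tilde h_n^+ +\tilde h_n^-)/2$ achieves the same bound by halving the pointwise defect to $\ep/2$ and then using the ordinary reverse triangle inequality. Your route is self-contained and avoids the auxiliary lemma; the paper's route isolates a reusable arithmetic fact.

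\textbf{Box estimate.} The paper never touches Strassen or joint parameters. It simply splits
\[
\Box(Y,Z)\le\Box(Y,f_*X)+\Box\bigl((\iota\circ f)_*X,\,g_*X\bigr)\le 2\,\proh(\muy,f_*\mux)+\ep\le 3\ep,
\]
using Lemma~\ref{lem;BP} for the first term and Remark~\ref{rem;f*X} for the second. Your direct construction of a single pair of parameters via Strassen, disintegration along $f$, and a parametrization of the resulting coupling on $X\times Y$ is more laborious but equally valid; it has the minor advantage of producing explicit parameters witnessing the bound, while the paper's argument is shorter and relies only on facts already stated in Section~\ref{sec;pre}.
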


\begin{proof}
By assumption, there exist a Borel map $f:X\to Y$ and a Borel set $\tX\subset X$ with $\mux(\tX)\ge 1-\ep$,
\begin{align}\label{ineq;f}
d_Y (f(x), f(x')) \le d_X (x, x')+\ep
\end{align}
for any $x, x' \in\tX$, and $\proh(f_* \mux, \muy)\le\ep$.
Let $\iota:Y\to\R^\infty$ be an isometric embedding, e.g. the Kuratowski embedding.
Define functions $f_n, g_n:X\to\R$ by
\[
(f_n (x))_{n=1}^\infty = \iota\circ f(x) \quad\text{ and }\quad
g_n (x):= \inf_\tX [ f_n (\cdot)+d_X (x, \cdot) ].
\]
Then $g_n$ is $1$-Lipschitz and satisfies $0\le f_n (x) -g_n (x)\le\ep$, cf.~\cite[Proof of Lemma~5.4]{S}, and
\begin{align*}
\left|\, |f_n (x)-f_n (x')| - |g_n (x)-g_n (x')| \,\right|\le\ep
\end{align*}
for any $x, x'\in\tX$ by Lemma~\ref{lem;abcd}.
Hence the $1$-Lipschitz map $g:=(g_n)_{n=1}^\infty :X\to\R^\infty$ satisfies
\begin{align*}
\left|\, \|\iota\circ f(x)-\iota\circ f(x')\|_\infty - \|g(x)-g(x')\|_\infty \,\right|\le\ep,
\end{align*}
for any $x, x'\in\tX$.

Then $Z:=g_* X$ is an mm-space with $Z\prec X$ and
\[
\Box(Y, Z)
\le \Box(Y, f_* X) +\Box((\iota\circ f)_* X, Z)
\le 2\proh(\muy, f_* \mux) +\ep
\le 3\ep
\]
by Lemma~\ref{lem;BP} and Remark~\ref{rem;f*X}.
\end{proof}

The following is a GH analog of Lemma~\ref{lem;K}.
\begin{lem}[cf.~{\cite[Lemma~3.5]{NS}}]\label{lem;KGH}
Let $\ep>0$.
If $X$ and $Y$ are compact metric spaces with $Y\prec_\ep X$,
then there exists a compact metric space $Z$ with $Z\prec X$ and $\GH(Y, Z)\le 2\ep$.
\end{lem}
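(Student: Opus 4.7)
The plan is to adapt the proof of Lemma~\ref{lem;K} above, dropping the measure-theoretic ingredients and thereby getting a slightly better constant. By hypothesis, there is a map $f:X\to Y$ satisfying $d_Y(f(x),f(x'))\le d_X(x,x')+\ep$ for all $x,x'\in X$ and $d_Y(y,f(X))\le\ep$ for every $y\in Y$. First I would fix an isometric embedding $\iota:Y\to\R^\infty$, e.g.~the Kuratowski embedding (available since the compact space $Y$ is separable and bounded), and write the coordinates of $\iota\circ f:X\to\R^\infty$ as $(f_n)_{n=1}^\infty$. Each $f_n$ inherits from $f$ and the isometry $\iota$ the estimate $|f_n(x)-f_n(x')|\le d_X(x,x')+\ep$ for all $x,x'\in X$.

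Next, I would perform inf-convolution componentwise over all of $X$ (no nonexceptional set is needed since there is no measure): set
\[
g_n(x):=\inf_{x'\in X}\bigl[f_n(x')+d_X(x,x')\bigr].
\]
Standard manipulations, exactly as in the proof of Lemma~\ref{lem;K}, show that each $g_n$ is $1$-Lipschitz and satisfies $0\le f_n(x)-g_n(x)\le\ep$ for every $x\in X$ (upper bound by taking $x'=x$; lower bound from the displayed $\ep$-Lipschitz estimate on $f_n$). Assembling the $g_n$ produces a $1$-Lipschitz map $g:=(g_n)_{n=1}^\infty:X\to\R^\infty$ with $\|\iota\circ f(x)-g(x)\|_\infty\le\ep$ for every $x\in X$.

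I would then set $Z:=g(X)\subset\R^\infty$, endowed with the restricted sup-norm metric. Since $X$ is compact and $g$ is continuous, $Z$ is a compact metric space, and the corestriction $g:X\to Z$ is a $1$-Lipschitz surjection, so $Z\prec X$ in the sense of Definition~\ref{def;GHprec}. To bound $\GH(Y,Z)$, I would identify $Y$ with $\iota(Y)\subset\R^\infty$ and estimate the Hausdorff distance there. For any $x\in X$, the point $g(x)\in Z$ lies within $\ep$ of $\iota(f(x))\in\iota(Y)$ by construction. Conversely, for any $y\in Y$, choose $x\in X$ with $d_Y(y,f(x))\le\ep$; then by the triangle inequality
\[
\|\iota(y)-g(x)\|_\infty\le\|\iota(y)-\iota\circ f(x)\|_\infty+\|\iota\circ f(x)-g(x)\|_\infty\le 2\ep.
\]
Hence $\GH(Y,Z)\le\haus(\iota(Y),Z)\le 2\ep$, as required.

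There is no serious obstacle beyond the bookkeeping; every step is a direct simplification of the corresponding step in Lemma~\ref{lem;K}. The improvement from $3\ep$ to $2\ep$ reflects precisely the absence of the box-to-Prohorov passage $\Box\le 2\proh$ used in the mm-space case: here the Hausdorff distance between $\iota(Y)$ and $Z$ inside $\R^\infty$ is bounded directly.
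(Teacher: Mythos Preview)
Your proof is correct and follows essentially the same approach as the paper's: the paper also embeds $Y$ isometrically into $\R^\infty$, inf-convolves the coordinates $f_n$ over all of $X$ to obtain a $1$-Lipschitz $g$, sets $Z:=g(X)$, and bounds $\GH(Y,Z)$ via the Hausdorff distance in $\R^\infty$. The only cosmetic difference is that the paper writes the final estimate as $\haus(\iota(Y),\iota\circ f(X))+\haus(\iota\circ f(X),g(X))\le\ep+\ep$, whereas you verify the two inclusions for $\haus(\iota(Y),Z)$ directly.
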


\begin{proof}
By assumption, there exists a map $f:X\to Y$ with \eqref{ineq;f} for any $x, x' \in X$ and $B_\ep (f(X))=Y$.
Let $\iota:Y\to\R^\infty$ be an isometric embedding.
Define functions $f_n, g_n :X\to\R$ by
\[
(f_n (x))_{n=1}^\infty = \iota\circ f(x) \quad\text{ and }\quad
g_n (x):= \inf_X [ f_n (\cdot)+d_X (x, \cdot) ].
\]

The map $g=(g_n)_{n=1}^\infty :X\to\R^\infty$ is $1$-Lipschitz and satisfies $\|\iota\circ f(x) -g(x)\|_\infty\le\ep$ for any $x\in X$.
Then $Z:=g(X)\subset\R^\infty$ is a compact metric space with $Z\prec X$ and
\[
\GH(Y, Z) \le \haus(\iota(Y), \iota\circ f(X)) + \haus(\iota\circ f(X), g(X)) \le 2\ep,
\]
where $\haus$ is the Hausdorff distance between compact sets in $\R^\infty$.
\end{proof}

\begin{lem}\label{lem;surj}
Let $\mu$ and $\nu$ be Borel measures on separable metric spaces $X$ and $Y$ respectively.
Suppose that a continuous map $f:X\to Y$ satisfies $f_* \mu=\nu$ and that $\supp\mu$ is compact.
Then $f(\supp\mu)=\supp\nu$.
\end{lem}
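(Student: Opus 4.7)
The plan is to prove the two inclusions $f(\supp\mu)\subset\supp\nu$ and $\supp\nu\subset f(\supp\mu)$ separately, and the role of the compactness hypothesis will appear only in the second.

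For the forward inclusion, I would take $x\in\supp\mu$ and show that $y:=f(x)$ lies in $\supp\nu$. If $O\subset Y$ is any open neighborhood of $y$, then by continuity $f^{-1}(O)$ is an open neighborhood of $x$ in $X$, so by the definition of $\supp\mu$ we have $\mu(f^{-1}(O))>0$, and therefore $\nu(O)=f_*\mu(O)=\mu(f^{-1}(O))>0$. Hence $y\in\supp\nu$. Note that this direction does not use the compactness of $\supp\mu$.

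For the reverse inclusion, the key observation is that, since $\supp\mu$ is compact and $f$ is continuous, the image $f(\supp\mu)$ is compact and in particular closed in $Y$. I would argue by contraposition: if $y\in Y\setminus f(\supp\mu)$, then there exists an open neighborhood $O$ of $y$ with $O\cap f(\supp\mu)=\emptyset$, which forces $f^{-1}(O)\cap\supp\mu=\emptyset$. The standard fact that $\mu(X\setminus\supp\mu)=0$ for a Borel measure on a separable metric space (the complement of the support is covered by countably many open $\mu$-null balls) then gives $\nu(O)=\mu(f^{-1}(O))=0$, so $y\notin\supp\nu$.

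The only step requiring any care is this last one: making sure $\mu$ really is concentrated on $\supp\mu$. This is where separability of $X$ enters, via a countable base of open sets whose $\mu$-null members exhaust $X\setminus\supp\mu$. This is the main (and essentially only) technical point in the argument; everything else is formal manipulation of the definitions of support, push-forward, and continuity.
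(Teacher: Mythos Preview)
Your proof is correct and follows essentially the same approach as the paper: both argue the two inclusions separately, using continuity of $f$ for $f(\supp\mu)\subset\supp\nu$ and closedness of $f(\supp\mu)$ (from compactness) for the reverse inclusion. You make explicit two points the paper leaves implicit---that $f(\supp\mu)$ is closed and that separability guarantees $\mu(X\setminus\supp\mu)=0$---which is a welcome clarification but not a different argument.
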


\begin{proof}
If $y\in Y\setminus f(\supp\mu)$,
then there exists an open set $O\subset Y\setminus f(\supp\mu)$ with $y\in O$.
Then $f^{-1}(O)\subset X\setminus\supp\mu$ and hence
\[
\nu(O)=\mu(f^{-1}(O))=0.
\]
This means that $y\notin\supp\nu$ and hence $\supp\nu\subset f(\supp\mu)$.

For any $y=f(x)\in f(\supp\mu)$ with $x\in\supp\mu$ and any open set $O\subset Y$ with $y\in O$,
we have
\[
\nu(O)=\mu(f^{-1}(O))>0,
\]
which implies that $y\in\supp\nu$ and hence $f(\supp\mu)\subset\supp\nu$.
\end{proof}

\begin{lem}\label{lem;cptdominate}
If $\cP$ is a pyramid and $Y, Z\in\cP$ are compact mm-spaces,
then there exists a compact mm-space $X\in\cP$ with $Y, Z\prec X$.
\end{lem}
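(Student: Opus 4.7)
The plan is to use pyramid axiom (2) to obtain some $X\in\cP$ with $Y, Z\prec X$, then replace this $X$ by a compact descendant obtained via a joint pushforward onto $Y\times Z$. The point is that $Y\times Z$ is already compact (since $Y$ and $Z$ are), so any pushforward measure it carries will be supported on a compact set, while the pyramid's downward closure will keep the new space inside $\cP$.

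First, I apply Definition~\ref{def;pyramid}(2) to find $X\in\cP$ with $Y, Z\prec X$, witnessed by $1$-Lipschitz maps $f\colon X\to Y$ and $g\colon X\to Z$ with $f_*\mux=\muy$ and $g_*\mux=\muz$. I then equip $Y\times Z$ with the $\ell^\infty$ product metric
\[
d_{Y\times Z}((y,z),(y',z')):=\max\{d_Y(y,y'),d_Z(z,z')\},
\]
so that the joint map $h:=(f,g)\colon X\to Y\times Z$ is $1$-Lipschitz and the two coordinate projections $\pi_Y,\pi_Z$ on $Y\times Z$ are also $1$-Lipschitz. Define the mm-space $X':=h_*X=(\supp h_*\mux,\,d_{Y\times Z},\,h_*\mux)$. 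Since $Y\times Z$ is compact, $\supp h_*\mux$ is a closed subset of a compact space and hence compact, so $X'$ is a compact mm-space.

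Next I verify $X'\in\cP$ and $Y,Z\prec X'$. Since $h$ is $1$-Lipschitz, Remark~\ref{rem;f*X} gives $X'\prec X$, so Definition~\ref{def;pyramid}(1) yields $X'\in\cP$. For the dominations, I restrict $\pi_Y$ to $X'=\supp h_*\mux$. This restriction is $1$-Lipschitz, and by Lemma~\ref{lem;surj} applied to the continuous map $\pi_Y$ and the measure $h_*\mux$ of compact support, we get
\[
\pi_Y(X')=\pi_Y(\supp h_*\mux)=\supp(\pi_Y)_*h_*\mux=\supp f_*\mux=\supp\muy=Y,
\]
where the last equality uses our standing convention $Y=\supp\muy$. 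Moreover $(\pi_Y|_{X'})_*\mu_{X'}=(\pi_Y)_*h_*\mux=f_*\mux=\muy$, so $\pi_Y|_{X'}\colon X'\to Y$ is a $1$-Lipschitz surjection pushing forward the right measure, giving $Y\prec X'$. The same argument with $\pi_Z$ gives $Z\prec X'$, completing the proof.

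There is no real obstacle here; the only mild subtlety is making sure the surjectivity clause in the definition of $\prec$ (implicit in the pushforward condition) is handled correctly when passing from all of $Y\times Z$ to the support of $h_*\mux$, which is exactly what Lemma~\ref{lem;surj} is for.
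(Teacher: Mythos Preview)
Your proof is correct and is essentially the same construction as the paper's: the paper defines the pseudo-metric $d'_W(w,w')=\max_{V=Y,Z}d_V(f_V(w),f_V(w'))$ on $W$ and takes the completed quotient, which is exactly the closure of $(f_Y,f_Z)(W)$ in $Y\times Z$ with the $\ell^\infty$ metric---your $X'=\supp h_*\mu_X$. Your phrasing via the product space makes the compactness immediate (closed subset of a compact space) where the paper argues it by a sequence argument, but the two proofs are the same in substance; note also that the surjectivity check via Lemma~\ref{lem;surj} is not actually needed, since Definition~\ref{def;precep} only asks for a $1$-Lipschitz map pushing forward the measure.
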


\begin{proof}(cf.~Proof of Lemma~\ref{lem;dominatedsubconv})
By definition,
there exists an mm-space $W\in\cP$ with $Y, Z\prec W$.
Let $f_V :W\to V$ be a $1$-Lipschitz map with $(f_V)_* \muw =\mu_V$ for $V=Y, Z$.
For $w, w' \in W$, define
\[
d'_W (w, w') := \max_{V=Y, Z} d_V (f_V (w), f_V (w')),
\]
which is a pseudo-metric on $W$.
Let $X=(X, d_X)$ be the metric completion of the quotient metric space of $(W, d'_W)$ by identifying two points $w, w'$ if $d'_W (w,w')=0$. Let $\pi:W\to X$ be the canonical $1$-Lipschitz map and $\mux:=\pi_* \muw$, so that $X\in\cP$. We define maps $g_V :X\to V$ by
\[
g_V|_{\pi(W)}([w]) := f_V (w)
\]
for $[w] \in \pi(W)$ and $V=Y, Z$.
Here ${g_V}|_{\pi(W)}$ is well-defined, 1-Lipschitz, and hence there exists its unique 1-Lipschitz extension $g_V$ on $X$.
Then it satisfies $(g_V)_* \mux=(g_V \circ \pi)_* \muw =(f_V)_* \muw =\mu_V$.
In particular, we have $Y, Z\prec X$.

Given a sequence $\seqn{x_n}$ of $X$,
there exists a sequence $\seqk{n(k)}$ for which $\seqk{g_V (x_{n(k)})}$ converges in $V=Y, Z$.
Since
\[
d_X (x_{n(k)}, x_{n(l)}) = \max_{V=Y, Z} d_V (g_V (x_{n(k)}), g_V (x_{n(l)})),
\]
$\seqk{x_{n(k)}}$ also converges and hence $X$ is compact.
\end{proof}

\begin{lem}\label{lem;X}
Let $\seqn{X_n}$ be a sequence of compact metric spaces
and $\seqn{\ep_n}$ a sequence of positive numbers with $\sum_{n=1}^\infty \ep_n <\infty$.
Suppose that there exist maps $f_n :X_{n+1} \to X_n$ with
\[
d_{X_{n+1}} (x, x') -\ep_n \le d_{X_n} (f_n (x), f_n (x'))
\]
for any $n\in\N$ and $x, x' \in X_{n+1}$.
Then there exists a compact metric space $X$ with $\GH(X_n, X) \to 0$ as $\nti$.
\end{lem}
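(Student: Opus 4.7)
The plan is to show $\{X_n\}$ is GH-precompact, extract a subsequential GH-limit $X$, and then show every other subsequential limit is isometric to $X$; combined with precompactness this forces the full sequence to converge to $X$. Iterating the hypothesis gives $\diam X_{n+1}\le\diam X_n+\ep_n$, so the diameters are uniformly bounded by $\diam X_1+\sum_k\ep_k$. For uniform total boundedness, fix $\delta>0$, choose $N$ with $\sum_{k\ge N}\ep_k<\delta/2$, and let $\{p_1,\dots,p_K\}$ be a $\delta/2$-net in $X_N$. Writing $F_{N,n}:=f_N\circ\cdots\circ f_{n-1}:X_n\to X_N$ and iterating the hypothesis gives $d_{X_N}(F_{N,n}(x),F_{N,n}(x'))\ge d_{X_n}(x,x')-\sum_{k=N}^{n-1}\ep_k$, so the preimages $F_{N,n}^{-1}(B(p_i,\delta/2))$ cover $X_n$ by $K$ sets of diameter at most $3\delta/2$. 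Gromov's compactness theorem then yields a subsequence $X_{n_k}$ GH-converging to a compact metric space $X$.

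Suppose $X_{m_k}\to X'$ is another GH-subsequential limit; by passing to sub-subsequences I may assume $n_1<m_1<n_2<m_2<\cdots$. The composition $F_{n_k,m_k}:X_{m_k}\to X_{n_k}$ is almost expanding with defect $\sigma_k:=\sum_{l=n_k}^{m_k-1}\ep_l\to 0$, and similarly for $F_{m_k,n_{k+1}}:X_{n_{k+1}}\to X_{m_k}$. I then use the expanding analogue of Lemma~\ref{lem;GHprecep}: if $Y_k\to Y$ and $Z_k\to Z$ in GH and maps $\phi_k:Y_k\to Z_k$ satisfy $d_{Z_k}(\phi_k(y),\phi_k(y'))\ge d_{Y_k}(y,y')-\delta_k$ with $\delta_k\to 0$, then there is an expanding map $\phi:Y\to Z$, i.e., $d_Z(\phi(y),\phi(y'))\ge d_Y(y,y')$ for all $y,y'\in Y$. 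Applied in both directions this produces expanding maps $X'\to X$ and $X\to X'$. Antisymmetry of the expanding-map order on compact metric spaces (cf.~Remark~\ref{rem;NS}) then forces $X\cong X'$: given expanding $f:A\to B$ and $g:B\to A$ with $A,B$ compact, the composition $g\circ f$ is a distance-non-decreasing self-map of $A$; any point outside its image would generate an infinite orbit with pairwise distances bounded below by the positive distance to that image, contradicting compactness of $A$, so $g\circ f$ is surjective and hence an isometry of $A$; a symmetric argument for $f\circ g$ shows $f,g$ are mutually inverse isometries.

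To establish the expanding analogue of Lemma~\ref{lem;GHprecep}, pick correspondences $R_k\subset Y_k\times Y$ and $R_k'\subset Z_k\times Z$ of distortion tending to zero, and compose through $\phi_k$ to form relations $S_k\subset Y\times Z$ whose projection onto $Y$ covers $Y$ and which satisfy $d_Z(z,z')\ge d_Y(y,y')-\eta_k$ for $(y,z),(y',z')\in S_k$, with $\eta_k\to 0$. Blaschke selection on $Y\times Z$ yields a subsequential Hausdorff limit $S\subset Y\times Z$ with $\mathrm{proj}_Y S=Y$ and the strict expanding relation $d_Z(z,z')\ge d_Y(y,y')$; choosing, for each $y\in Y$, some $\phi(y)$ with $(y,\phi(y))\in S$ produces the expanding map. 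The main obstacle is this graph-limit step: the hypothesis provides only one-sided distance control, so the $\phi_k$ need not be continuous and Arzel\`a--Ascoli is unavailable, forcing one to work at the graph level.
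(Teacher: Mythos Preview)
Your approach is correct and genuinely different from the paper's. The paper builds the limit $X$ explicitly as an inverse limit: it chooses finite $\ep_n$-nets $N_n \subset X_n$, sets $g_n := \pi_n \circ f_n$ with $\pi_n$ a nearest-point projection onto $N_n$, forms
\[
X' := \Bigl\{(x_n) \in \prod_n N_n : g_n(x_{n+1}) = x_n \text{ for all } n\Bigr\}, \qquad d_{X'}\bigl((x_n),(x_n')\bigr) := \lim_n d_{X_n}(x_n, x_n'),
\]
and takes the metric quotient; compactness of $X$ and $\GH(X_n, X)\to 0$ are then checked by direct diagonal arguments exploiting the finiteness of each $N_n$. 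Your route---GH-precompactness via Gromov's criterion, then uniqueness of subsequential limits through the antisymmetry of the expanding-map order $\precsim$ of Remark~\ref{rem;NS}---is more conceptual and avoids constructing $X$ by hand, at the price of the Blaschke/graph-limit step and the appeal to the standard fact that a distance-non-decreasing self-map of a compact space is a surjective isometry. One small wrinkle in your sketch of that last fact: you assume $d\bigl(p,(g\circ f)(A)\bigr)>0$ for $p$ outside the image, which presupposes the image is closed; the clean fix is to run the orbit-recurrence argument on the pair $(p,q)$ in $A\times A$ to get $d(h(p),h(q))\le d(p,q)$ first, conclude that $h$ is an isometry and hence continuous, and only then infer that $h(A)$ is closed and equals $A$. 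The paper's construction has the merit of producing a concrete model of the limit, which is what is actually used downstream in Claim~\ref{cl;V}; your argument separates the precompactness and uniqueness ingredients more transparently and would adapt more easily to related settings.
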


\begin{proof}
Take finite subsets $N_n \subset X_n$ with $X_n = U_{\ep_n} (N_n)$.
Let $\pi_n : X_n \to N_n$ be a map with $d_{X_n} (x, \pi_n (x))=d_{X_n} (x, N_n)$ for any $x\in X_n$.
Then the map $g_n :=\pi_n \circ f_n :X_{n+1} \to N_n$ satisfies
\[
d_{X_{n+1}} (x, x') -3\ep_n < d_{X_n} (g_n (x), g_n (x'))
\]
for any $n\in\N$ and $x, x' \in X_{n+1}$.

Define
\[
X':=\left\{(x_n)\in\prod_{n=1}^\infty N_n : g_n (x_{n+1})=x_n \text{ for any } n \right\}
\]
and
\[
d_{X'} (x, x'):= \lim_\nti d_{X_n} (x_n, x'_n) \in [0, \infty)
\]
for $x=(x_n), x'=(x'_n) \in X'$.

Define $X$ as the quotient set of $X'$ with the equivalence relation $\sim$ defined by
$x\sim x'$ if and only if $d_{X'} (x, x')=0$ for $x, x' \in X'$.
Then $(X, d_X)$ equipped with the distance $d_X$ induced by $d_{X'}$ is a compact metric space.

Let us prove that $X$ is compact.
Let $\seqk{x_k}$ be a sequence in $X$.
Take $x_{n, k}\in N_n$ with $(x_{n, k})\in X'$ and $[(x_{n, k})]=x_k$ for each $k\in\N$.
Since $N_n$ is a finite set,
there exist $y_n \in N_n$ and infinite subsets $K_n \subset\N$
with $x_{n, k} =y_n$ and $K_{n+1} \subset K_n \setminus\{\min K_n\}$ for any $n\in\N$ and $k\in K_n$.
Then
\begin{align}\label{eq;y}
g_n (y_{n+1}) =g_n (x_{n+1, k}) =x_{n, k}=y_n
\end{align}
for any $n\in\N$ and $k\in K_{n+1}$ and hence $(y_n)\in X'$. Put $y:=[(y_n)]\in X$.
Define $k(l):=\min K_l$ for $l\in\N$.
For $\ep>0$, take $L\in\N$ with $3\sum_{n=L}^\infty \ep_n <\ep$.
If $l>L$, then we have
\[
d_X (x_{k(l)}, y) =\lim_\nti d_{X_n} (x_{n, k(l)}, y_n) < d_{X_L} (x_{L, k(l)}, y_L) + \ep =\ep
\]
and hence $\seql{x_{k(l)}}$ converges to $y$. Thus $X$ is compact.

Let $\ep>0$ and take a finite subset $N\subset X$ with $X=U_\ep (N)$. Note that $\GH(N, X) < \ep$.
For each $x\in X$, fix $(x_n) \in X'$ with $[(x_n)]=x$.
Define the map $h_n : X\to X_n$ by $h_n (x)=x_n$.

We shall prove that
\[
\limsup_{\nti} \max_{x\in X_n} d_{X_n} (x, h_n (N)) < \ep,
\]
which implies $\limsup_{\nti}\GH(X_n, N) < 2\ep$ by e.g.~\cite[Theorem~6.14]{T}.
If not, there exist an infinite subset $K=K_0 \subset\N$ and $x_k \in X_k$ for $k\in K$ with
\[
\lim_{K\ni k\to\infty} d_{X_k} (x_k, h_k (N)) \ge \ep.
\]

For $(n, k)\in\N\times K$ with $n<k$, we define
\[
x_{n, k} := g_n \circ\dots\circ g_{k-1} (x_k) \in N_n.
\]
Since $N_n$ is a finite set,
by the same argument as above,
there exist $y_n \in N_n$ and infinite subsets $K_n \subset\N$
with $K_n \subset K_{n-1}$ and $x_{n, k} = y_n$ for any $n\in\N$ and $k\in K_n$.
Then Equation~\eqref{eq;y} holds for any $n\in\N$ and $k\in K_{n+1}$ and hence $y:=(y_n)\in X'$.
Take $z=(z_n)\in X'$ with $[z]\in N$ and $d_{X'} (y, z)<\ep$.
Then we have
\begin{align*}
d_{X_k} (x_k, z_k) -d_{X_k} (y_k, z_k)
\le d_{X_k} (x_k, y_k)
\le d_{X_n}(x_{n, k}, y_{n}) + 3\sum_{i=n}^{k-1} \ep_i
\end{align*}
for any $n\in\N$ and $k\in K_n$ with $n<k$,
which yields $\ep-d_{X'} (y, z)\le 0$ as $k\to\infty$ and $\nti$, but this is a contradiction.

Since $\ep>0$ is taken arbitrarily, we conclude that $\GH(X_n, X)\to 0$ as $\nti$.
\end{proof}

\begin{cor}\label{cor;X}
Let $\seqn{X_n}$ be a sequence of compact metric spaces
and $\seqn{\ep_n}$ a sequence of positive numbers with $\sum_{n=1}^\infty \ep_n <\infty$.
Suppose that $X_{n+1} \prec_{\ep_n} X_{n}$ for any $n$.
Then there exists a compact metric space $X$ with $\GH(X_n, X) \to 0$ as $\nti$.
\end{cor}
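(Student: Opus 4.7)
The plan is to reduce the corollary to Lemma~\ref{lem;X} by converting the approximate domination $X_{n+1}\prec_{\ep_n}X_n$, which provides a map \emph{from $X_n$ to $X_{n+1}$}, into an approximately expanding map \emph{from $X_{n+1}$ to $X_n$} suited to the hypothesis of Lemma~\ref{lem;X}. The only real content is a short triangle-inequality calculation; the geometric limit is then supplied by Lemma~\ref{lem;X}.

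First I would unpack Definition~\ref{def;GHprec}: for each $n$, the assumption $X_{n+1}\prec_{\ep_n}X_n$ yields a map $h_n:X_n\to X_{n+1}$ satisfying
\[
d_{X_{n+1}}(h_n(x),h_n(x'))\le d_{X_n}(x,x')+\ep_n\quad\text{for all }x,x'\in X_n,
\]
together with $d_{X_{n+1}}(y,h_n(X_n))\le\ep_n$ for every $y\in X_{n+1}$. Using the second property and the axiom of choice, I define $g_n:X_{n+1}\to X_n$ by picking, for each $y\in X_{n+1}$, a point $g_n(y)\in X_n$ with $d_{X_{n+1}}(y,h_n(g_n(y)))\le\ep_n$.

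Next I would verify that these $g_n$ satisfy the hypothesis of Lemma~\ref{lem;X} with $\ep_n$ replaced by $3\ep_n$. Indeed, for any $y,y'\in X_{n+1}$, the triangle inequality gives
\[
d_{X_{n+1}}(y,y')\le d_{X_{n+1}}(y,h_n(g_n(y)))+d_{X_{n+1}}(h_n(g_n(y)),h_n(g_n(y')))+d_{X_{n+1}}(h_n(g_n(y')),y'),
\]
and applying the two defining properties of $h_n$ and the selection of $g_n$ bounds the right side by $d_{X_n}(g_n(y),g_n(y'))+3\ep_n$. Hence
\[
d_{X_{n+1}}(y,y')-3\ep_n\le d_{X_n}(g_n(y),g_n(y')).
\]

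Finally, since $\sum_{n=1}^\infty 3\ep_n<\infty$, Lemma~\ref{lem;X} applied to the sequence $\{X_n\}$ with the maps $g_n$ and the tolerances $3\ep_n$ produces a compact metric space $X$ with $\GH(X_n,X)\to 0$ as $\nti$, which is the desired conclusion. There is no genuine obstacle here; the only point that might deserve a word of comment is that $g_n$ need not be Borel or continuous, but Lemma~\ref{lem;X} makes no such requirement on its maps, so this is harmless.
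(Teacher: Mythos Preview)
Your proof is correct and follows essentially the same route as the paper's: both invert the $\ep_n$-approximate-Lipschitz map $h_n:X_n\to X_{n+1}$ by selecting approximate preimages, obtain an approximately expanding map $X_{n+1}\to X_n$, and then invoke Lemma~\ref{lem;X}. One small technicality: since $h_n$ need not be continuous, $h_n(X_n)$ need not be closed and the infimum $d_{X_{n+1}}(y,h_n(X_n))\le\ep_n$ may fail to be attained, so you should instead choose $g_n(y)$ with $d_{X_{n+1}}(y,h_n(g_n(y)))<2\ep_n$ (the paper does this via its projection $\pi_n$), which yields the constant $5\ep_n$ rather than $3\ep_n$---harmless for the conclusion.
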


\begin{proof}
By assumption, there exist maps $g_n :X_n \to X_{n+1}$ with
\[
d_{X_{n+1}} (g_n (x), g_n (x')) \le d_{X_n} (x, x') +\ep_n
\]
for any $x, x' \in X_n$ and $\max_{X_{n+1}} d_{X_{n+1}} (\cdot, g_n (X_n)) \le\ep_n$.

Take maps $\pi_n :X_{n+1} \to g_n(X_n)$ with $d_{X_{n+1}} (x, \pi_n (x))<2\ep_n$ for any $x\in X_{n+1}$
and maps $h_n :X_{n+1} \to X_n$ with $g_n (h_n (x))=x$ for any $x\in g_n (X_n)$.
Define $f_n := h_n \circ\pi_n :X_{n+1}\to X_n$.
Then
\begin{align*}
d_{X_n} (f_n (x), f_n (x')) +\ep_n
&\ge d_{X_{n+1}} (g_n (f_n (x)), g_n (f_n (x'))) \\
&= d_{X_{n+1}} (\pi_n (x), \pi_n (x'))
> d_{X_{n+1}} (x, x') -4\ep_n
\end{align*}
for any $x, x' \in X_{n+1}$.
Now Corollary~\ref{cor;X} follows from Lemma~\ref{lem;X}.
\end{proof}

\section{Proof of Theorem~\ref{thm;main}}\label{sec;pf}
In this section, we prove Theorem~\ref{thm;main} and Proposition~\ref{prop;main}.

\begin{proof}[Proof of Theorem~\ref{thm;main}]
Let $\cP\subset\cX$ be a pyramid,
$\cY\subset\cP$ a GH-precompact subset,
and $\seqn{\ep_n}$ a sequence of positive numbers with $\sum_{n=1}^\infty \ep_n <\infty$.

We choose a countable dense subset $\cD=\{Y_1, Y_2, \dots\}\subset\cY$
and apply Lemma~\ref{lem;cptdominate} to take a sequence $\seql{W_l}$ of compact mm-spaces $W_l \in\cP$ with $W_1 =Y_1$ and $Y_l, W_{l-1} \prec W_l$ for any $l\ge 2$.
Let $f_l :W_{l+1} \to W_l$ and $g_l :W_l \to Y_l$ be $1$-Lipschitz maps with $(f_l)_* \mu_{W_{l+1}} =\mu_{W_l}$ and $(g_l)_* \mu_{W_l} =\mu_{Y_l}$ respectively.
By Lemma~\ref{lem;surj}, these maps are surjective.

We use the GH-precompactness of $\cY$ and Lemma~\ref{lem;GHprecpt} to take a finite set
\[
\cA_{Y, n} =\{A_{Y, n, 1}, \dots, A_{Y, n, N(Y, n)}\}
\]
of disjoint Borel sets $A_{Y, n, i} \subset Y$ with
\begin{itemize}
\item $\diam A<\ep_n$ if $Y\in\cY$ and $A\in\cA_{Y, n}$,
\item $\bigsqcup\cA_{Y, n}=\bigsqcup_{i=1}^{N(Y, n)} A_{Y, n, i} =Y$ for any $Y\in\cY$,
\item $\sup_{Y\in\cY} \#\cA_{Y, n} = \sup_{Y\in\cY} N(Y, n) <\infty$, and
\item any $B\in\cA_{Y, n+1}$ satisfies $B\subset A$ for some $A\in\cA_{Y, n}$.
\end{itemize}

Choose $\pt_{Y, n, i} \in A$ for each $A=A_{Y, n, i} \in\bigcup_{n\in\N} \bigcup_{Y\in\cD} \cA_{Y, n}$.

Letting $D:=\sup_{Y\in\cY} \diam Y <\infty$,
we use the $\Box$-precompactness of $\cY$ and the compactness of the closed interval $[0, D]\subset\R$
to take a sequence $\seqn{\cY_n}$ of finite subsets $\cY_n \subset\cD$ and $\cY_{Y, n}\subset\cY_n$ with
\begin{itemize}
\item $\Box(Y, \cY_n)<\ep_n$ for any $Y\in\cD$,
\item $\cY_n \subset\cY_{n+1} =\bigcup_{Y\in\cY_n} \cY_{Y, n+1}$, and
\item $Y\in\cY_{Y, n+1}$, $N(Y, n)=N(Z, n)$, and
\begin{equation}\label{ineq;pt}
|d_Y (\pt_{Y, n, i}, \pt_{Y, n, j})-d_Z (\pt_{Z, n, i}, \pt_{Z, n, j})| <\ep_n,
\end{equation}
if $Y\in\cY_n$, $Z\in\cY_{Y, n+1}$, and $1\le i, j\le N(Y, n)$.
\end{itemize}

Such $\cY_n$ and $\cY_{Y, n}$ are obtained inductively as follows.
First, we use the $\Box$-precompactness of $\cD$ to take a finite subset $\cD_n \subset\cD$ with $\Box(Y, \cD_n)<\ep_n$ for any $Y\in\cD$.
Next, we use $\sup_{Y\in\cD} N(Y, n) <\infty$ and the compactness of $[0, D]$ to take a finite subset $\cY_n \subset\cD$ and subsets $\cD_{Y, n+1} \subset\cD$ for $Y\in\cY_n$ with
\begin{itemize}
\item $\cD_n \cup\cY_{n-1} \subset\cY_n$, where $\cY_0 :=\emptyset$,
\item $\cD=\bigcup_{Y\in\cY_n} \cD_{Y, n+1}$, and
\item $Y\in\cD_{Y, n+1}$, $N(Y, n)=N(Z, n)$, and Inequality~\eqref{ineq;pt} holds if $Y\in\cY_n$, $Z\in\cD_{Y, n+1}$, and $1\le i, j\le N(Y, n)$.
\end{itemize}
Then we define
$\cY_{Y, n+1} :=\cY_{n+1} \cap\cD_{Y, n+1}$ for $Y\in\cY_n$.

We fix $n\in\N$ and put $L(n):=\max\{l:Y_l \in\cY_n\}$.
Then $Y\prec W_{L(n)}$ for any $Y\in\cY_n$.
Let $h_{n, Y} := g_l \circ f_l \circ\dots\circ f_{L(n)-1} :W_{L(n)} \to Y$ for $Y=Y_l \in\cY_n$.
Define $X_n$ (resp. $\overline{X}_n$) as the set of all functions $\phi:\cY_n \to\N$
with $1\le \phi(Y)\le N(Y, n)$ for any $Y\in\cY_n$ and $\mu_{W_{L(n)}} (A_\phi)>0$ (resp. $A_\phi \ne\emptyset$), where
\[
A_\phi :=\bigcap_{Y\in\cY_n} h_{n, Y}^{-1} (A_{Y, n, \phi(Y)}) \subset W_{L(n)},
\]
and
\begin{align*}
d_{X_n}(\phi, \psi) &:= \max\{ d_Y (\pt_{Y, n, \phi(Y)}, \pt_{Y, n, \psi(Y)}) : Y\in\cY_n \}, \\
\mu_{X_n} (\{\phi\}) &:= \mu_{W_{L(n)}} (A_\phi)
\end{align*}
for $\phi, \psi\in X_n$. Note that $\mu_{X_n}$ is a probability measure on $X_n$.
Thus $X_n =(X_n, d_{X_n}, \mu_{X_n})$ is a finite mm-space.

Define the set $V_n$ of set-valued maps $\phi:\cY_n \to 2^\N \setminus\{\emptyset\}$ with $\phi(Y)\subset\{1, \dots, N(Y, n)\}$ for any $Y\in\cY_n$.
Then define
\[
d_{V_n}(\phi, \psi):=\max\{ d_Y (\pt_{Y, n, i}, \pt_{Y, n, j}) : Y\in\cY_n, i\in\phi(Y), j\in\psi(Y) \}
\]
if $\phi\ne\psi$ and $d_{V_n}(\phi, \psi):=0$ if $\phi=\psi$ for $\phi, \psi\in V_n$
to obtain a finite metric space $V_n =(V_n, d_{V_n})$.

\begin{clm}\label{cl;V}
There exists a compact metric space $V$ with $\GH(V_n, V)\to 0$ as $\nti$.
\end{clm}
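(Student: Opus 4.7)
The plan is to verify the hypothesis of Lemma~\ref{lem;X} for the sequence $\seqn{V_n}$ with the summable error sequence $\seqn{3\ep_n}$, by constructing natural maps $f_n \colon V_{n+1} \to V_n$ that reflect the refinement of the partitions $\cA_{Y, n}$ and the similarity structure of the classes $\cY_{Y, n+1}$. The crux is a single-line distance estimate that comes from chaining three triangle inequalities --- two for the diameter of a level-$n$ cell and one using \eqref{ineq;pt} --- whose routine verification is the easy part.

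For each $Y$ and $n$, the nesting property ``$B \in \cA_{Y, n+1} \Rightarrow B \subset A$ for some $A \in \cA_{Y, n}$'' provides a canonical projection $\pi_{Y, n} \colon \{1, \ldots, N(Y, n+1)\} \to \{1, \ldots, N(Y, n)\}$ characterized by $A_{Y, n+1, k} \subset A_{Y, n, \pi_{Y, n}(k)}$. Given $\phi \in V_{n+1}$ and $Y \in \cY_n$, I set
\[
f_n(\phi)(Y) := \bigcup_{Z \in \cY_{Y, n+1}} \pi_{Z, n}(\phi(Z)),
\]
a nonempty subset of $\{1, \ldots, N(Y, n)\}$ (using $N(Z, n) = N(Y, n)$ for $Z \in \cY_{Y, n+1}$), so that $f_n(\phi) \in V_n$. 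To verify the main estimate $d_{V_{n+1}}(\phi, \psi) - 3\ep_n \le d_{V_n}(f_n(\phi), f_n(\psi))$, choose a maximizer $(Z^*, k^*, l^*)$ of $d_{V_{n+1}}(\phi, \psi)$ with $Z^* \in \cY_{Y^*, n+1}$, $k^* \in \phi(Z^*)$, and $l^* \in \psi(Z^*)$. By construction $\pi_{Z^*, n}(k^*) \in f_n(\phi)(Y^*)$ and $\pi_{Z^*, n}(l^*) \in f_n(\psi)(Y^*)$. Using that $\pt_{Z^*, n+1, k^*}$ and $\pt_{Z^*, n, \pi_{Z^*, n}(k^*)}$ lie in the common cell $A_{Z^*, n, \pi_{Z^*, n}(k^*)}$ of diameter less than $\ep_n$ (analogously for $l^*$), and \eqref{ineq;pt} to transfer from $Z^*$ to $Y^*$, one obtains
\[
d_{V_{n+1}}(\phi, \psi) \le d_{Y^*}(\pt_{Y^*, n, \pi_{Z^*, n}(k^*)}, \pt_{Y^*, n, \pi_{Z^*, n}(l^*)}) + 3\ep_n,
\]
and the first term is at most $d_{V_n}(f_n(\phi), f_n(\psi))$ whenever the two images are distinct.

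The main obstacle is the degenerate case $f_n(\phi) = f_n(\psi)$ with $\phi \ne \psi$, where the convention in the definition of $d_{V_n}$ forces the right-hand side of the target inequality to vanish and one must prove $d_{V_{n+1}}(\phi, \psi) \le 3\ep_n$ directly. To dispose of this case, the equality $f_n(\phi)(Y^*) = f_n(\psi)(Y^*)$ of unions must be exploited to produce witness matchings: since $\pi_{Z^*, n}(k^*) \in f_n(\psi)(Y^*)$, there exist $Z' \in \cY_{Y^*, n+1}$ and $l' \in \psi(Z')$ with $\pi_{Z', n}(l') = \pi_{Z^*, n}(k^*)$, and symmetrically for $l^*$; combined with the maximality of $(Z^*, k^*, l^*)$ (which forces the contribution of the triple $(Z^*, k^*, \cdot)$ to $d_{V_{n+1}}(\phi, \psi)$ to be maximal) and \eqref{ineq;pt}, such matchings should pin the witnesses into a single level-$n$ cell of diameter less than $\ep_n$, yielding the bound. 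Once the hypothesis of Lemma~\ref{lem;X} is established in both the generic and degenerate cases, the lemma applied to $\seqn{V_n}$ with the summable sequence $\seqn{3\ep_n}$ immediately produces a compact metric space $V$ with $\GH(V_n, V) \to 0$ as $\nti$, completing the proof of the claim.
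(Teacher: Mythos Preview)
Your map $f_n\colon V_{n+1}\to V_n$ is exactly the paper's map $\phi\mapsto\phi'$, and your chain of inequalities in the nondegenerate case reproduces the paper's argument verbatim; the overall strategy (build the map and invoke Lemma~\ref{lem;X}) is identical. You are also right to single out the collision case $f_n(\phi)=f_n(\psi)$ with $\phi\ne\psi$: the paper's final inequality $d_Y(\pt_{Y,n,i'},\pt_{Y,n,j'})\le d_{V_n}(\phi',\psi')$ silently uses $\phi'\ne\psi'$, so this is precisely the spot the paper glosses over.

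The problem is that your sketch for this degenerate case cannot be completed, because the target bound $d_{V_{n+1}}(\phi,\psi)\le 3\ep_n$ is simply false in general. Take $\cY_n=\{Y\}$, $\cY_{Y,n+1}=\cY_{n+1}=\{Y,Z\}$, with $N(\,\cdot\,,n)=N(\,\cdot\,,n+1)=2$ and $\pi_{Y,n}=\pi_{Z,n}=\id$ (each level-$(n{+}1)$ cell sits inside the level-$n$ cell of the same index). Set $\phi(Y)=\{1\}$, $\phi(Z)=\{2\}$, $\psi(Y)=\{2\}$, $\psi(Z)=\{1\}$. Then
\[
f_n(\phi)(Y)=\pi_{Y,n}(\{1\})\cup\pi_{Z,n}(\{2\})=\{1,2\}=\pi_{Y,n}(\{2\})\cup\pi_{Z,n}(\{1\})=f_n(\psi)(Y),
\]
so $f_n(\phi)=f_n(\psi)$; yet
\[
d_{V_{n+1}}(\phi,\psi)\ \ge\ d_Y(\pt_{Y,n+1,1},\pt_{Y,n+1,2})\ >\ d_Y(\pt_{Y,n,1},\pt_{Y,n,2})-2\ep_n,
\]
and nothing prevents $d_Y(\pt_{Y,n,1},\pt_{Y,n,2})$ from being of order $D=\sup_{W\in\cY}\diam W$. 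Your ``matching'' witness $(Z',l')=(Z,1)$ lies in a different space than $Z^*=Y$, so it never competes with $(Z^*,k^*,l^*)$ in the maximum defining $d_{V_{n+1}}$, and the maximality clause gives you nothing. Thus the hypothesis of Lemma~\ref{lem;X} genuinely fails for this $f_n$; closing the argument requires either modifying $f_n$ so that such swaps do not collide, or abandoning the direct appeal to Lemma~\ref{lem;X} in favour of a different compactness argument.
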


\begin{proof}
Let $\phi\in V_{n+1}$ and suppose that $Y\in\cY_n$, $Z\in\cY_{Y, n+1} \subset\cY_{n+1}$, and $i\in\phi(Z)$.
Then there exists $i'\in\N$ with $1\le i'\le N(Y, n)=N(Z, n)$ and
\begin{equation}\label{ineq;A}
A_{Z, n+1, i} \subset A_{Z, n, i'}.
\end{equation}

Take $\phi':\cY_n \to 2^\N$ with $\phi'(Y)=\{i' : Z\in\cY_{Y, n+1}, i\in\phi(Z)\}$ for any $Y\in\cY_n$.
Then $\phi' \in V_n$.
This defines a map $V_{n+1} \ni\phi \mapsto \phi' \in V_n$.

If $\phi, \psi\in V_{n+1}$, $Y\in\cY_n$, $Z\in\cY_{Y, n+1} \subset\cY_{n+1}$, $i\in\phi(Z)$, and $j\in\psi(Z)$ satisfy
\[
d_{V_{n+1}}(\phi, \psi) = d_Z (\pt_{Z, n+1, i}, \pt_{Z, n+1, j}),
\]
then
\begin{align*}
d_{V_{n+1}}(\phi, \psi)
&< d_Z (\pt_{Z, n, i'}, \pt_{Z, n, j'}) + 2\ep_n \\
&<d_Y (\pt_{Y, n, i'}, \pt_{Y, n, j'}) +3\ep_n
\le d_{V_n}(\phi', \psi') + 3\ep_n,
\end{align*}
where the first and second inequalities follow from Inequalities~\eqref{ineq;A} and \eqref{ineq;pt} respectively.

Now Claim~\ref{cl;V} follows from Lemma~\ref{lem;X}.
\end{proof}

Any element of $X_n$ can be regarded as an element of $V_n$ and this gives an isometric embedding $X_n \hookrightarrow V_n$.
By Claim~\ref{cl;V} and Lemmas~\ref{lem;precpt} and \ref{lem;GHprecpt},
a subsequence of $\seqn{X_n}$ $\Box$-converges to some mm-space $X$.

\begin{clm}
$X\in\cP$.
\end{clm}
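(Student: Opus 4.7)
The plan is to show that each $X_n$ is closely approximated in box distance by an mm-space that lies in $\cP$, and then conclude via the closedness of $\cP$ in $(\cX, \Box)$ together with its downward-closedness under $\prec$.

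First, I would view $X_n$ as a quantization of $W_{L(n)}$ by constructing a Borel map $q_n \colon W_{L(n)} \to X_n$ defined by $q_n(w) := \phi$ whenever $w \in A_\phi$ for some $\phi \in X_n$, extended arbitrarily on the complement. Each $A_\phi$ is Borel (a finite intersection of preimages of Borel sets under the continuous maps $h_{n, Y}$), and since $\mu_{W_{L(n)}}(A_\phi) = 0$ for $\phi \in \overline{X}_n \setminus X_n$ with $\overline{X}_n$ finite, the set $\widetilde{W}_n := \bigsqcup_{\phi \in X_n} A_\phi$ has full measure. By construction $(q_n)_* \mu_{W_{L(n)}} = \mu_{X_n}$, so the Prohorov condition in the definition of $\prec_\ep$ holds exactly.

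To verify the near-$1$-Lipschitz condition, let $w \in A_\phi$ and $w' \in A_\psi$ with $\phi, \psi \in X_n$, and fix $Y \in \cY_n$. Both $h_{n, Y}(w)$ and $\pt_{Y, n, \phi(Y)}$ lie in $A_{Y, n, \phi(Y)}$, whose diameter is less than $\ep_n$, and similarly for $\psi$. Since $h_{n, Y}$ is $1$-Lipschitz, the triangle inequality gives
\[
d_Y(\pt_{Y, n, \phi(Y)}, \pt_{Y, n, \psi(Y)}) \le d_{W_{L(n)}}(w, w') + 2 \ep_n,
\]
and taking the maximum over $Y \in \cY_n$ yields $d_{X_n}(\phi, \psi) \le d_{W_{L(n)}}(w, w') + 2 \ep_n$. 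Hence $X_n \prec_{2 \ep_n} W_{L(n)}$.

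Next, Lemma~\ref{lem;K} produces an mm-space $Z_n$ with $Z_n \prec W_{L(n)}$ and $\Box(X_n, Z_n) \le 6 \ep_n$. Since $W_{L(n)} \in \cP$ and pyramids are downward-closed under $\prec$, we get $Z_n \in \cP$. Along the subsequence of $\seqn{X_n}$ that $\Box$-converges to $X$, the bound $\Box(X_n, Z_n) \le 6 \ep_n \to 0$ forces $Z_n$ to $\Box$-converge to $X$ along the same subsequence. Since $\cP$ is closed in $(\cX, \Box)$, we conclude $X \in \cP$. The main conceptual point is recognizing $X_n$ as a quantization of $W_{L(n)}$ through the joint partition induced by the $h_{n, Y}$; once that viewpoint is adopted, the near-$1$-Lipschitz estimate comes for free from the diameter bounds on the $A_{Y, n, i}$, and every remaining step is routine.
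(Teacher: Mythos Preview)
Your proof is correct and follows essentially the same route as the paper: define the quantization map from $W_{L(n)}$ into $X_n$ (the paper maps into $\overline{X}_n$, a cosmetic difference), use the diameter bound $\diam A_{Y,n,i}<\ep_n$ and the $1$-Lipschitz property of $h_{n,Y}$ to get $X_n\prec_{2\ep_n}W_{L(n)}$, and then conclude via Lemma~\ref{lem;K} and the closedness of $\cP$. Your write-up is in fact slightly more careful about the null set $\bigsqcup_{\phi\in\overline{X}_n\setminus X_n}A_\phi$ and spells out the final step with the auxiliary $Z_n$ more explicitly, but the argument is the same.
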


\begin{proof}
We fix $n\in\N$ and define a Borel map $\Phi_n: W_{L(n)} \to\overline{X}_n$ by
$\Phi_n (w) :=\phi$ if $w \in A_\phi$ and $\phi\in\overline{X}_n$.
Then we have $(\Phi_n)_* \mu_{W_{L(n)}} = \mu_{X_n}$. Moreover, for any $w \in A_\phi$ and $w' \in A_\psi$, we have
\begin{align*}
d_{X_n}(\phi, \psi)
&= \max_{Y \in \cY_n} d_Y(\pt_{Y, n, \phi(Y)}, \pt_{Y,n,\psi(Y)})\\
&< \max_{Y \in \cY_n} d_Y(h_{n, Y}(w), h_{n, Y}(w')) + 2\ep_n
\le d_{W_{L(n)}}(w, w') + 2\ep_n,
\end{align*}
where the first inequality follows from
\[
d_Y (\pt_{Y, n, \phi(Y)}, h_{n, Y}(w)) \le \diam{A_{Y,n,\phi(Y)}} <\ep_n
\]
and the similar inequality for $\psi$ and $w'$.

Thus $X_n \prec_{2\ep_n} W_{L(n)}$. Since $W_{L(n)} \in\cP$, we obtain $X\in\cP$ as $\nti$ by Lemma~\ref{lem;K}.
\end{proof}

\begin{clm}
$Y\prec X$ for any $Y\in\cY$.
\end{clm}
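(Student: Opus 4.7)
The plan is to apply Lemma~\ref{lem;precep}. Passing to the subsequence of $\seqn{X_n}$ along which $\Box$-convergence to $X$ holds, it suffices to produce for each $Y \in \cY$ a sequence $Y_n \in \cY_n$ with $\Box(Y, Y_n) \le \ep_n$ and $Y_n \prec_{\ep_n} X_n$, since then $\seqn{Y_n}$ $\Box$-converges to $Y$ and Lemma~\ref{lem;precep} delivers $Y \prec X$.

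Locating such $Y_n$ is immediate: the density of $\cD$ in $\cY$ with respect to $\Box$, combined with the construction's bound $\Box(Z, \cY_n) < \ep_n$ for every $Z \in \cD$, yields $\Box(Y, \cY_n) \le \ep_n$ for every $n$ via the triangle inequality, and the finiteness of $\cY_n$ lets the infimum be attained by some $Y_n \in \cY_n$.

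The main step is to verify $Y' \prec_{\ep_n} X_n$ for every $Y' \in \cY_n$. I would define the evaluation map $f_{Y'} \colon X_n \to Y'$ by $f_{Y'}(\phi) := \pt_{Y', n, \phi(Y')}$. It is $1$-Lipschitz, since
\[
d_{Y'}(\pt_{Y', n, \phi(Y')}, \pt_{Y', n, \psi(Y')}) \le \max_{Z \in \cY_n} d_Z(\pt_{Z, n, \phi(Z)}, \pt_{Z, n, \psi(Z)}) = d_{X_n}(\phi, \psi).
\]
For the pushforward, the fact that $\{A_\phi\}_{\phi \in \overline{X}_n}$ partitions $W_{L(n)}$ together with $(h_{n, Y'})_* \mu_{W_{L(n)}} = \mu_{Y'}$ gives
\[
(f_{Y'})_* \mu_{X_n}(\{\pt_{Y', n, i}\}) = \mu_{W_{L(n)}}(h_{n, Y'}^{-1}(A_{Y', n, i})) = \mu_{Y'}(A_{Y', n, i}),
\]
so $(f_{Y'})_* \mu_{X_n}$ coincides with the pushforward of $\mu_{Y'}$ under the retraction $Y' \to \{\pt_{Y', n, i}\}_i$ that sends each $y \in A_{Y', n, i}$ to $\pt_{Y', n, i}$. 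Since $\diam A_{Y', n, i} < \ep_n$, Lemma~\ref{lem;P}(1) yields $\proh((f_{Y'})_* \mu_{X_n}, \mu_{Y'}) \le \ep_n$. Taking $X_n$ itself as the nonexceptional domain completes the verification of $Y' \prec_{\ep_n} X_n$.

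No serious obstacle is anticipated: the combinatorial heavy lifting was done by the construction of $X_n$ and the preceding two claims, and Lemma~\ref{lem;precep} assembles the ingredients cleanly.
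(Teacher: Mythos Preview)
Your proposal is correct and follows essentially the same approach as the paper: the core map $\phi\mapsto\pt_{Y',n,\phi(Y')}$, its $1$-Lipschitz property, and the Prohorov bound via $\diam A_{Y',n,i}<\ep_n$ are exactly what the paper uses. The only cosmetic difference is that the paper first proves $Y\prec X$ for each fixed $Y\in\bigcup_n\cY_n$ (using the same $Y$ for all $n\ge m$) and then extends to all of $\cY$ by density, applying Lemma~\ref{lem;precep} twice, whereas you handle a general $Y\in\cY$ in one pass by choosing approximants $Y_n\in\cY_n$.
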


\begin{proof}
Fix $Y\in\cY_m$ for some $m\in\N$.
Define a map $\Psi_n :X_n \to Y$ for $n \ge m$ by $\Psi_n (\phi) =\pt_{Y, n, \phi(Y)}$.
Then $\Psi_n$ is $1$-Lipschitz. Moreover, we have
\begin{align*}
(\Psi_n)_* \mu_{X_n}
&= \sum_{\phi \in X_n} \mu_{W_{L(n)}}(A_\phi) \, \delta_{\pt_{Y,n,\phi(Y)}}\\
&= \sum_{i=1}^{N(Y, n)} \left(\sum_{\phi \in X_n; \, \phi(Y)=i} \mu_{W_{L(n)}}(A_\phi)\right) \delta_{\pt_{Y,n,i}}
= \sum_{i=1}^{N(Y, n)} \muy(A_{Y, n, i}) \, \delta_{\pt_{Y, n, i}}
\end{align*}
and hence
\[
(\Psi_n)_* \mu_{X_n}(A) \le \muy(U_{\ep_n}(A))
\]
for any Borel set $A \subset Y$, which leads to $\proh((\Psi_n)_* \mu_{X_n}, \muy) \le \ep_n$.
Then we obtain $Y\prec_{\ep_n} X_n$ and then $Y\prec X$ as $\nti$ by Lemma~\ref{lem;precep}.

Since $\bigcup_\niN \cY_n$ is dense in $\cY$,
we also have $\cY\prec X$ by Lemma~\ref{lem;precep} again.
\end{proof}

Now our proof of Theorem~\ref{thm;main} is complete.
\end{proof}

\begin{rem}
In fact, the sequence $\seqn{X_n}$ itself $\Box$-converges to $X$ in our proof of Theorem~\ref{thm;main}.

The map $f_n :X_{n+1}\to X_n; \phi\mapsto\phi''$,
where $\phi'' \in X_n$ is the element with $A_{Y, n+1, \phi(Y)} \subset A_{Y, n, \phi''(Y)}$ for any $Y\in\cY_n$,
satisfies
\begin{align*}
d_{X_n} (f_n (\phi), f_n (\psi))
&=d_{X_n} (\phi'', \psi'') \\
&=\max\{ d_Y (\pt_{Y, n, \phi''(Y)}, \pt_{Y, n, \psi''(Y)}) : Y\in\cY_n \} \\
&<\max\{ d_Y (\pt_{Y, n+1, \phi(Y)}, \pt_{Y, n+1, \psi(Y)}) : Y\in\cY_n \} +2\ep_n \\
&\le\max\{ d_Y (\pt_{Y, n+1, \phi(Y)}, \pt_{Y, n+1, \psi(Y)}) : Y\in\cY_{n+1} \} +2\ep_n \\
&=d_{X_{n+1}} (\phi, \psi) +2\ep_n
\end{align*}
for any $\phi, \psi\in X_{n+1}$ and $(f_n)_* \mu_{X_{n+1}} = \mu_{X_n}$.
Thus $X_n \prec_{2\ep_n} X_{n+1}$ for any $n$.

Since $\seqn{X_n}$ is $\Box$-precompact by Claim~\ref{cl;V} and Lemmas~\ref{lem;precpt} and \ref{lem;GHprecpt},
$\seqn{X_n}$ $\Box$-converges to $X$ by Proposition~\ref{prop;epincrease}.
\end{rem}

\begin{proof}[Proof of Proposition~\ref{prop;main}]
The proof is the same as that of Theorem~\ref{thm;main}
except for the use of Lemmas~\ref{lem;GHprecep} and \ref{lem;KGH} instead of Lemmas~\ref{lem;precep} and \ref{lem;K}.
\end{proof}

\section{Miscellaneous results}\label{sec;mis}
In this section, we collect several results related to our Theorem~\ref{thm;main}.

First, we construct examples of $\cY$ and $X$ as in Theorem~\ref{thm;G}.
\begin{ex}[cf.~{\cite[Theorem~4.27]{S}}]
\label{ex;XDN}
Let $\cX^D_{\le N}$ be the set of all (mm-isomorphism classes of) finite mm-spaces with cardinality $\le N$ and diameter $\le D$
for $N\in\N\cap [2, \infty)$ and $D>0$.
Define a sequence $\seqn{p_n}$ by
\[
p_1 :=\frac{1}{N} \quad\text{ and }\quad p_{n+1}:=\frac{1}{N}\left(1-\sum_{k=1}^{n} p_k \right)
\]
for $n\in\N$.
For example, $p_n =2^{-n}$ if $N=2$.

Define $X=X_N :=\N$ and
\[
d_X (x, x') := \begin{cases}
\ D &\text{ if } x\ne x' \\
\ 0 &\text{ if } x=x'
\end{cases}
\qquad\text{ and }\qquad
\mux(\{x_n\}):=p_n
\]
for $x_n \in X$.
Then $\cX^D_{\le N} \prec X$.

Indeed, if $Y=\{y_1, \dots, y_M\}\in\cX^D_{\le N}$, then
there exist $a_{k, n} \in\{0, 1\}$ with
\[
\muy(\{y_k\})=\sum_{n=1}^\infty a_{k, n} p_n \qquad\text{ and }\qquad \sum_{k=1}^N a_{k, n}=1.
\]
Define $f:X\to Y$ by $f(x_n)=y_k$ if $a_{k, n}=1$.
Then $f_* \mux=\muy$ and hence $Y\prec X$.
\end{ex}

The following is an application of Theorem~\ref{thm;main}.
\begin{prop}[cf.~Lemma~\ref{lem;dominatedsubconv}]
Let $\seqn{\ep_n}$ be a sequence of positive numbers with $\ep_n \to 0$ as $\nti$.
Suppose that $\seqn{X_n}$ is a sequence of mm-spaces,
$\cY$ is a set of mm-spaces, $\{\supp\muy : Y\in\cY\}$ is GH-precompact,
and $Y\prec_{\ep_n} X_n$ for any $n$ and $Y\in\cY$.
Then there exists a sequence $\seqn{Z_n}$ of mm-spaces with $Z_n \prec X_n$ for any $n$,
whose subsequence converges to an mm-space $X$ with $\cY\prec X$.
\end{prop}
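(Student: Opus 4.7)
The plan is to reduce the statement to an application of Theorem~\ref{thm;main} through the compactness of the space $(\Pi,\rho)$ of all pyramids from Theorem~\ref{thm;rho}. To each $X_n$ I attach its principal pyramid $\cP_{X_n}=\{W\in\cX:W\prec X_n\}$. By Theorem~\ref{thm;rho}, after passing to a subsequence, still indexed by $n$ for simplicity, the sequence $\seqn{\cP_{X_n}}$ converges weakly to some pyramid $\cP\in\Pi$.

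The core step is to verify $\cY\subset\cP$. Fix $Y\in\cY$. The approximate-domination hypothesis $Y\prec_{\ep_n}X_n$ is exactly the input required for Lemma~\ref{lem;K}, which produces an mm-space $W_{Y,n}$ with $W_{Y,n}\prec X_n$, i.e., $W_{Y,n}\in\cP_{X_n}$, and $\Box(Y,W_{Y,n})\le 3\ep_n$. Hence $\Box(Y,\cP_{X_n})\to 0$ as $\nti$, and condition~$(2)$ in the definition of weak convergence of pyramids (Definition~\ref{def;pyramid}) forces $Y\in\cP$. Since $\{\supp\muy:Y\in\cY\}$ is GH-precompact by hypothesis and $\cY\subset\cP$, Theorem~\ref{thm;main} then supplies a compact mm-space $X\in\cP$ with $\cY\prec X$.

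To construct the sequence $\seqn{Z_n}$, I use condition~$(1)$ in the definition of weak convergence: since $X\in\cP$ and $\cP_{X_n}\to\cP$ weakly, $\Box(X,\cP_{X_n})\to 0$ along the extracted subsequence. Along this subsequence I pick $Z_n\in\cP_{X_n}$, hence $Z_n\prec X_n$, with $\Box(X,Z_n)\le\Box(X,\cP_{X_n})+1/n$, so $\seqn{Z_n}$ $\Box$-converges to $X$ along the subsequence. For indices $n$ outside the subsequence I simply take $Z_n$ to be any mm-space dominated by $X_n$, for example the one-point mm-space. The main obstacle is the passage from the individual approximate dominations $Y\prec_{\ep_n}X_n$ to the existence of a single mm-space $X$ dominating all of $\cY$; this is resolved precisely by combining Lemma~\ref{lem;K} with the pyramid compactification, after which Theorem~\ref{thm;main} finishes the argument.
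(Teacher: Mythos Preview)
Your proposal is correct and follows essentially the same approach as the paper's proof: pass to a subsequence so that $\cP_{X_n}$ weakly converges to a pyramid $\cP$ (Theorem~\ref{thm;rho}), use Lemma~\ref{lem;K} to show $\cY\subset\cP$, apply Theorem~\ref{thm;main} to obtain $X\in\cP$ with $\cY\prec X$, and then use the definition of weak convergence to extract $Z_n\in\cP_{X_n}$ converging to $X$. Your write-up in fact spells out a couple of details (how condition~(2) of Definition~\ref{def;pyramid} yields $Y\in\cP$, and how to define $Z_n$ for indices outside the subsequence) that the paper leaves implicit.
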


\begin{proof}
By using Theorem~\ref{thm;rho} and taking a subsequence,
we may assume that $\seqn{\cP_{X_n}}$ converges weakly to some pyramid $\cP$.
Then $\cY\subset\cP$ by Lemma~\ref{lem;K}.
Now Theorem~\ref{thm;main} states that there exists an mm-space $X$ with $\cY\prec X\in\cP$.
Then there exists a sequence $\seqn{Z_n}$ of $Z_n \in\cP_{X_n}$ which $\Box$-converges to $X$.
This finishes the proof.
\end{proof}

\begin{lem}\label{lem;precinjective}
Let $\cY\subset\cX$ be a nonempty subset which is downward directed,
i.e., if $X, Y\in\cY$, then there exists $Z\in\cY$ with $Z\prec X, Y$.
Then $\cP:=\bigcap_{Y\in\cY} \cP_Y$ is a pyramid and there exists an mm-space $X$ with $\cP=\cP_X$.
\end{lem}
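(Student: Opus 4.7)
The plan is to reduce $\cY$ to a countable $\prec$-decreasing sequence, extract a $\Box$-convergent subsequence, and show that the limit is a $\prec$-maximum of $\cP$.

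Since $(\cX, \Box)$ is separable, I choose a countable $\Box$-dense subset $\cD = \seqn{Z_n}$ of $\cY$. Using downward directedness of $\cY$, I inductively construct $\seqn{Y_n}$ in $\cY$ with $Y_1 = Z_1$ and $Y_{n+1} \prec Y_n, Z_{n+1}$ for each $n$, so that $Y_n \prec Y_m$ and $Y_n \prec Z_m$ whenever $n \ge m$. I claim the pyramid $\cP_{Y_1}$ is $\Box$-precompact: for each $\ep > 0$, tightness of $\mu_{Y_1}$ produces a compact $K \subset Y_1$ with $\mu_{Y_1}(K) \ge 1 - \ep$ admitting a finite Borel partition into pieces of diameter at most $\ep$, and pushing this partition through any $1$-Lipschitz surjection $Y_1 \to V$ with $V \prec Y_1$ verifies the criterion of Lemma~\ref{lem;precpt} with bounds independent of $V$. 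Hence some subsequence $\seqk{Y_{n(k)}}$ $\Box$-converges to an mm-space $X \in \cX$.

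I next show $X \in \cP$, i.e., $X \prec Y$ for every $Y \in \cY$. For fixed $m$ and any $k$ with $n(k) \ge m$, Remark~\ref{rem;prec} gives $X \prec_{3\Box(Y_{n(k)}, X)} Y_{n(k)}$; composing with the $1$-Lipschitz domination $Y_{n(k)} \prec Y_m$ yields $X \prec_{3\Box(Y_{n(k)}, X)} Y_m$, and Lemma~\ref{lem;precep} then gives $X \prec Y_m$. The same composition argument with $Z_m$ in place of $Y_m$ gives $X \prec Z_m$ for every $m$. For an arbitrary $Y \in \cY$, choose $\seqk{Z_{m(j)}}$ with $Z_{m(j)} \to Y$ in $\Box$; by symmetry of $\Box$ and Remark~\ref{rem;prec}, $Z_{m(j)} \prec_{3\Box(Z_{m(j)}, Y)} Y$, and composing with $X \prec Z_{m(j)}$ yields $X \prec_{3\Box(Z_{m(j)}, Y)} Y$. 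Passing to the limit via Lemma~\ref{lem;precep} gives $X \prec Y$, so $X \in \cP$.

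Finally, $X$ is a $\prec$-maximum of $\cP$: for any $W \in \cP$ we have $W \prec Y_{n(k)}$ for every $k$, Remark~\ref{rem;prec} gives $Y_{n(k)} \prec_{3\Box(Y_{n(k)}, X)} X$, and composition yields $W \prec_{3\Box(Y_{n(k)}, X)} X$, so Lemma~\ref{lem;precep} gives $W \prec X$. The reverse inclusion $\cP_X \subset \cP$ is immediate from transitivity of $\prec$, so $\cP = \cP_X$, which is a pyramid by definition. The main technical point is the repeated chaining of exact $\prec$ and approximate $\prec_\ep$ relations in both directions through the intermediate spaces $Y_{n(k)}$ and $Z_{m(j)}$; each composition preserves the nonexceptional-domain and pushforward bounds of Definition~\ref{def;precep} by Lemma~\ref{lem;P}, so unwinding the definitions carefully is all that is required.
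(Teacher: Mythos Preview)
Your argument is correct and takes a genuinely different route from the paper's.  The paper verifies the pyramid axioms for $\cP$ directly: to produce a common upper bound in $\cP$ for given $Z,W\in\cP$, it invokes Lemma~\ref{lem;dominatedsubconv} (the subconvergence lemma for dominated sequences) with $W_n=V_n$ chosen cofinal in $\cY$, and only after establishing that $\cP$ is a compact pyramid does it appeal to the external result \cite[Lemma~7.14]{S} to conclude $\cP=\cP_X$ for some $X$.  You instead construct the apex $X$ directly as a subsequential $\Box$-limit of a cofinal $\prec$-decreasing chain in $\cY$, and then establish $\cP=\cP_X$ by repeatedly chaining exact $\prec$ with approximate $\prec_\ep$ through the $Y_{n(k)}$ and the dense $Z_m$, passing to the limit via Lemma~\ref{lem;precep} and Remark~\ref{rem;prec}.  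This is more elementary --- it avoids both Lemma~\ref{lem;dominatedsubconv} and \cite[Lemma~7.14]{S} --- and it yields the stronger conclusion that $\cP$ has a $\prec$-maximum in one stroke.  On the other hand, the paper's axiom-by-axiom verification generalizes cleanly to Lemma~\ref{lem;pyramidinjective}, where the intersection of pyramids need not have an apex.

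One small remark: you invoke separability of $(\cX,\Box)$ to obtain the countable dense $\cD\subset\cY$.  This is true (finite mm-spaces with rational data are $\Box$-dense) but not stated in the paper; alternatively, fix any $Y_0\in\cY$ and note that $\{Z\in\cY:Z\prec Y_0\}$ is cofinal in $(\cY,\prec)$, lies in the precompact set $\cP_{Y_0}$, and is therefore separable, which suffices for your construction.
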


\begin{proof}
Since $\cP\subset\cX$ contains the one-point mm-space,
$\cP$ is nonempty and closed.
If $Z\in\cP$ and $W\prec Z$, then $W\in\cP_Y$ for any $Y\in\cY$ and hence $W\in\cP$.

Let  $Z, W\in\cP$ and
$\seqn{\ep_n}$ a sequence of positive numbers with $\ep_n \to 0$ as $\nti$.
Fix $Y_0 \in\cY$ and put $\cP_0 :=\cP_{Y_0}$ and $\cY_0 :=\cP_0 \cap\cY$.
Since $\cP_0$ is compact by Lemma~\ref{lem;precpt},
there exists a finite subset $\cY_n \subset\cY_0$ with $\Box(Y, \cY_n)<\ep_n$ for any $Y\in\cY_0$.
By assumption, there exists $V_n \in\cY$ with $V_n \prec Y$ for any $Y\in\cY_n$.
Since $Z, W\in\cP\subset\cP_{V_n}$,
a subsequence of a sequence $\seqn{X_n}$ of mm-spaces with $Z, W\prec X_n \prec V_n$ $\Box$-converges to some mm-space $X$ with $Z, W\prec X$ by Lemmas~\ref{lem;dominatedsubconv} and \ref{lem;precep}.

If $Y\in\cY$, there exists $Y' \in\cY_0$ with $Y' \prec Y$
and take a sequence $\seqn{Y_n}$ with $Y_n \in\cY_n$ and $\Box(Y_n, Y')<\ep_n$ and hence $Y_n \prec_{3\ep_n} Y$ by Remark~\ref{rem;prec}.
Then we have
\[
X_n \in\cP_{V_n} \subset \cP_{Y_n} \subset U_{9\ep_n} (\cP_Y)
\]
for any $n$ by Lemma~\ref{lem;K} and hence $X\in\cP_Y$.
Thus $X\in\cP$ and hence $\cP$ is a pyramid.

Since $\cP\subset\cP_0$,
$\cP$ is a compact pyramid and hence
there exists an mm-space $X$ with $\cP=\cP_X$ by e.g. \cite[Lemma~7.14]{S} and Lemma~\ref{lem;precep}.
\end{proof}

We recall that $\Pi$ denotes the set of all pyramids.
The following is a generalization of Lemma~\ref{lem;precinjective}.
\begin{lem}\label{lem;pyramidinjective}
Let $\Sigma\subset\Pi$ be a nonempty set of pyramids which is downward directed,
i.e., if $\cP, \cQ\in\Sigma$, then there exists $\mathcal{R}\in\Sigma$ with $\mathcal{R}\subset\cP\cap\cQ$.
Then $\cP:=\bigcap\Sigma$ is a pyramid.
\end{lem}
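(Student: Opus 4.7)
The plan is to verify the defining axioms of a pyramid for $\cP := \bigcap\Sigma$. The downward-$\prec$-closedness is immediate: if $X \in \cP$ and $Y \prec X$, then $X \in \cQ$ for every $\cQ \in \Sigma$, so $Y \in \cQ$ by the pyramid axiom for $\cQ$, and hence $Y \in \cP$. Closedness of $\cP$ is inherited from that of each $\cQ$, and $\cP$ is nonempty since every pyramid contains the one-point mm-space. What remains is the upward-directedness axiom, and I intend to handle this not by constructing an upper bound in $\cP$ for a given pair $Y, Z \in \cP$ directly, but by identifying $\cP$ with a pyramid $\cP^{*}$ obtained as a $\rho$-limit of a decreasing sequence of elements of $\Sigma$ inside the compact metric space $(\Pi, \rho)$ of Theorem~\ref{thm;rho}. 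This identification yields all the pyramid axioms in one stroke.

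Since $(\Pi,\rho)$ is a compact metric space, it is separable, and so is $\Sigma$ with its subspace topology. Fix a countable $\rho$-dense subset $\{\mathcal{R}_{k}\}_{k=1}^{\infty}\subset\Sigma$. By the downward-directedness of $\Sigma$, I inductively pick $\cQ_{n}\in\Sigma$ with $\cQ_{n+1}\subset\cQ_{n}\cap\mathcal{R}_{n+1}$, so that $(\cQ_{n})$ is decreasing and $\cQ_{n}\subset\mathcal{R}_{k}$ whenever $n \ge k$. The compactness of $(\Pi,\rho)$ then yields a subsequence $\cQ_{n_{\ell}}\to\cP^{*}$ in $\rho$ for some $\cP^{*}\in\Pi$.

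The key step is the identification $\cP^{*}=\cP$. A direct check using the weak-convergence criterion in Definition~\ref{def;pyramid} shows that the $\rho$-limit of a decreasing sequence of pyramids coincides with their set-theoretic intersection, so $\cP^{*}=\bigcap_{n}\cQ_{n}$. The inclusion $\cP\subset\cP^{*}$ is immediate because $\cP\subset\cQ_{n}$ for every $n$. For $\cP^{*}\subset\cP$, take any $X\in\cP^{*}$ and $\mathcal{R}\in\Sigma$, and select by density a subsequence $\mathcal{R}_{k_{j}}\to\mathcal{R}$ in $\rho$. For every $j$ we have $X\in\cQ_{k_{j}}\subset\mathcal{R}_{k_{j}}$, so $\Box(X,\mathcal{R}_{k_{j}})=0$; the contrapositive of condition (2) in Definition~\ref{def;pyramid} then forces $X\in\mathcal{R}$. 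Since $\mathcal{R}\in\Sigma$ was arbitrary, $X\in\cP$.

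I expect the main obstacle to be this last step, where the passage from the countable dense family $\{\mathcal{R}_{k}\}$ to the possibly uncountable $\Sigma$ hinges on the compatibility of the metric $\rho$ with weak convergence afforded by Theorem~\ref{thm;rho}. Without this bridge, downward-directedness alone would only produce a pyramid sandwiched between $\cP$ and some cofinal countable subchain, and there would be no way to certify that this limit pyramid lies inside every $\mathcal{R}\in\Sigma$. Once $\cP^{*}=\cP$ is in hand, the fact that $\cP^{*}\in\Pi$ is already a pyramid finishes the proof.
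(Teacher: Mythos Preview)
Your proof is correct and takes a genuinely different route from the paper's. The paper verifies the upward-directedness axiom for $\cP$ directly: given $Y,Z\in\cP$, it uses the compactness of $(\Pi,\rho)$ to produce finite $\ep_n$-nets $\Sigma_n\subset\Sigma$, picks $\cP_n\in\Sigma$ below $\bigcap\Sigma_n$ by downward-directedness, and then invokes Lemma~\ref{lem;dominatedsubconv} to extract a $\Box$-convergent sequence $X_n$ with $Y,Z\prec X_n\prec W_n\in\cP_n$; the limit $X$ is shown to lie in every $\cQ\in\Sigma$ by approximating $\cQ$ with elements of $\Sigma_n$. Your approach instead identifies $\cP$ with a pyramid $\cP^*\in\Pi$ obtained as the $\rho$-limit of a decreasing countable chain in $\Sigma$ cofinal for a dense sequence, and deduces all pyramid axioms at once from $\cP^*\in\Pi$. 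This bypasses Lemma~\ref{lem;dominatedsubconv} entirely, trading it for the elementary (but unstated in the paper) fact that the weak limit of a decreasing sequence of pyramids equals their intersection; the density argument you use to pass from the countable chain back to all of $\Sigma$ is essentially the same mechanism the paper uses in its final step. Your route is slightly more conceptual and lighter on prerequisites, while the paper's is more hands-on and makes the construction of the common upper bound explicit.
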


\begin{proof}
Since $\cP\subset\cX$ contains the one-point mm-space,
$\cP$ is nonempty and closed.
If $X\in\cP$ and $Y\prec X$, then $Y\in\cQ$ for any $\cQ\in\Sigma$ and hence $Y\in\cP$.

Let $Y, Z\in\cP$ and $\seqn{\ep_n}$ a sequence of positive numbers with $\ep_n \to 0$ as $\nti$.
Since $(\Pi, \rho)$ is a compact metric space,
where $\rho$ is the metric given in Theorem~\ref{thm;rho},
there exists finite subsets $\Sigma_n \subset\Sigma$ with $\rho(\cQ, \Sigma_n)<\ep_n$ for any $\cQ\in\Sigma$.
By assumption, there exists a pyramid $\cP_n \in\Sigma$ with $\cP_n \subset\bigcap\Sigma_n$.
Since $Y, Z\in\cP\subset\cP_n$,
there exists $W_n \in\cP_n$ with $Y, Z\prec W_n$ and a subsequence of $\seqn{X_n}$ with $Y, Z\prec X_n \prec W_n$ $\Box$-converges to some mm-space $X$ with $Y, Z\prec X$ by Lemmas~\ref{lem;dominatedsubconv} and \ref{lem;precep}.

If $\cQ\in\Sigma$,
take a sequence $\seqn{\cQ_n}$ with $\cQ_n \in\Sigma_n$ and $\rho(\cQ_n, \cQ)<\ep_n$.
Then we have
\[
X_n \in\cP_{W_n} \subset\cP_n \subset\bigcap\Sigma_n \subset\cQ_n
\]
for any $n$ and hence $X\in\cQ$.
Thus $X\in\cP$ and hence $\cP$ is a pyramid.
\end{proof}

Lemmas~\ref{lem;precinjective} and \ref{lem;pyramidinjective} are not trivial because an intersection of pyramids is not necessarily a pyramid in general.
\begin{ex}
There are pyramids $\cP$ and $\cQ$ for which $\cP\cap\cQ$ is not a pyramid.

Let $(A=\{a, b, c, d\}, d_A)$ be a metric space with the distances between two distinct points being $1$ and define
\begin{align*}
X&:=(A, d_A, (1/2)\delta_a +(1/3)\delta_b +(1/6)\delta_c), \\
Y&:=(A, d_A, (5/12)\delta_a +(5/12)\delta_b +(1/12)\delta_c +(1/12)\delta_d), \\
Z&:=(A, d_A, (1/2)\delta_a +(1/2)\delta_b), \text{ and } \\
W&:=(A, d_A, (5/6)\delta_a +(1/6)\delta_c).
\end{align*}

Then $Z, W\in\cP_X \cap\cP_Y$, but there does not exist $V\in\cP_X \cap\cP_Y$ with $Z, W\prec V$.
Thus $\cP_X \cap\cP_Y$ is not a pyramid.
\end{ex}

The following gives a minimal mm-space $Z$ with $\cY\prec Z$ for a subset $\cY\subset\cX$.
\begin{prop}\label{prop;minimalX}
Let $\cY\subset\cX$ be a subset.
Suppose that there exists an mm-space $X$ with $\cY\prec X$.
Then there exists an mm-space $Z$ such that
\begin{enumerate}
\item $\cY\prec Z\prec X$ and
\item any mm-space $W$ with $\cY\prec W\prec Z$ is mm-isomorphic to $Z$.
\end{enumerate}
\end{prop}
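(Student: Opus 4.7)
The plan is to apply Zorn's lemma to the set
\[
\cA := \{W \in \cX : \cY \prec W \prec X\},
\]
partially ordered by $\prec$, in order to extract a $\prec$-minimal element $Z \in \cA$. The set $\cA$ is nonempty because $X \in \cA$. Once such a minimal $Z$ is produced, condition~(1) is immediate. For condition~(2), any mm-space $W$ with $\cY \prec W \prec Z$ lies in $\cA$ and satisfies $W \prec Z$, so minimality forces $Z \prec W$ as well, and antisymmetry of $\prec$ (Remark~\ref{rem;prec}) gives that $W$ is mm-isomorphic to $Z$.

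To verify the chain hypothesis of Zorn's lemma, I would first argue that $\cA$ is $\Box$-compact. The pyramid $\cP_X$ is $\Box$-precompact by Lemma~\ref{lem;precpt}: given $\ep > 0$, by tightness of $\mux$ choose a compact subset $K_\ep \subset X$ with $\mux(K_\ep) \ge 1 - \ep$ and cover $K_\ep$ by finitely many sets $B_1, \dots, B_N$ of diameter at most $\ep$; for any $Y \in \cP_X$ with witnessing $1$-Lipschitz map $f_Y : X \to Y$ satisfying $(f_Y)_* \mux = \muy$, the family $\cK_Y := \{f_Y(B_i)\}_{i=1}^N$ satisfies the conditions of Lemma~\ref{lem;precpt} uniformly in $Y$ (since $\diam f_Y(B_i) \le \ep$ and $\muy(\bigcup \cK_Y) \ge 1-\ep$). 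Combined with the fact that $\cP_X$ is $\Box$-closed by Lemma~\ref{lem;precep}, this yields the $\Box$-compactness of $\cP_X$. Applying Lemma~\ref{lem;precep} once more, with constant sequence $Y_n \equiv Y$ for each $Y \in \cY$, shows that $\cA$ is closed in $\cP_X$, hence compact.

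Given a chain $C \subset \cA$, I would then invoke the finite intersection property for the family $\{\cP_W \cap \cA\}_{W \in C}$ of $\Box$-closed subsets of the compact space $\cA$. Each $\cP_W \cap \cA$ is nonempty as it contains $W$ itself, and for any finite subfamily indexed by $W_1, \dots, W_n \in C$ the $\prec$-minimum $W^*$ among them belongs to every $\cP_{W_i} \cap \cA$. Compactness of $\cA$ then forces $\bigcap_{W \in C}(\cP_W \cap \cA)$ to be nonempty, and any element is a $\prec$-lower bound of $C$ in $\cA$. Zorn's lemma now produces the desired minimal element. The only step demanding real care is the $\Box$-precompactness of $\cP_X$; the rest is a routine topological application of the lemmas already in hand.
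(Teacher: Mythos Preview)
Your argument is correct and follows the same Zorn's-lemma strategy as the paper, but your verification of the chain condition is more direct than the paper's. The paper simply invokes Lemma~\ref{lem;precinjective}: given a chain $C\subset\cZ=\cA$, that lemma (applied to the downward-directed set $C$) produces an mm-space $X_0$ with $\cP_{X_0}=\bigcap_{W\in C}\cP_W$, and one checks $X_0\in\cZ$ is a lower bound. The proof of Lemma~\ref{lem;precinjective}, however, relies on Lemma~\ref{lem;dominatedsubconv} from~\cite{S}. Your route sidesteps both of these: you establish compactness of $\cA$ directly via Lemmas~\ref{lem;precpt} and~\ref{lem;precep}, and then the finite-intersection-property argument on $\{\cP_W\cap\cA\}_{W\in C}$ delivers the lower bound with no further input. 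This is a genuinely more elementary path to the same conclusion; the paper's detour through Lemma~\ref{lem;precinjective} buys the stronger statement that the intersection of pyramids indexed by a downward-directed family is again of the form $\cP_{X_0}$, which is of independent interest but not needed here. One small point worth tightening: to ensure the sets $f_Y(B_i)$ are Borel in your precompactness step, take the $B_i$ to be compact (e.g.\ closed balls intersected with $K_\ep$), so that their continuous images are compact.
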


\begin{proof}
Let $\cZ:=\{Z\in\cX : \cY\prec Z\prec X\} \ne\emptyset$.
Then Lemma~\ref{lem;precinjective} implies that $(\cZ, \prec)$ is an inductive ordered set.
Thus Zorn's lemma implies that there exists $Z\in\cZ$ for which any $W\in\cZ$ with $W\prec Z$ is mm-isomorphic to $Z$.
\end{proof}

The following gives a minimal pyramid $\cQ$ with $\cY\subset\cQ$ for a subset $\cY\subset\cX$.
\begin{prop}\label{prop;minimalP}
Let $\cY\subset\cX$ be a subset.
Suppose that there exists a pyramid $\cP$ with $\cY\subset\cP$.
Then there exists a pyramid $\cQ$ such that
\begin{enumerate}
\item $\cY\subset\cQ\subset\cP$ and
\item any pyramid $\mathcal{R}$ with $\cY\subset\mathcal{R}\subset\cQ$ satisfies $\mathcal{R}=\cQ$.
\end{enumerate}
\end{prop}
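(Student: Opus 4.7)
The plan is to reduce this immediately to Zorn's lemma using Lemma~\ref{lem;pyramidinjective}, which is custom-built for exactly this purpose. Let
\[
\mathcal{S} := \{\, \mathcal{R}\in\Pi : \cY\subset\mathcal{R}\subset\cP \,\},
\]
partially ordered by reverse inclusion. Since $\cP\in\mathcal{S}$, the set is nonempty. A maximal element of $(\mathcal{S},\supset)$ is precisely a pyramid $\cQ$ with the minimality property claimed.

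To verify the hypothesis of Zorn's lemma, let $\mathcal{C}\subset\mathcal{S}$ be a nonempty chain with respect to reverse inclusion (equivalently, totally ordered by inclusion), and set
\[
\cQ_0 := \bigcap_{\mathcal{R}\in\mathcal{C}} \mathcal{R}.
\]
Plainly $\cY\subset\cQ_0\subset\cP$, since each $\mathcal{R}\in\mathcal{C}$ satisfies these inclusions. Because $\mathcal{C}$ is totally ordered, it is downward directed in the sense of Lemma~\ref{lem;pyramidinjective}: for any $\mathcal{R}_1,\mathcal{R}_2\in\mathcal{C}$, the smaller of the two (say $\mathcal{R}_1\subset\mathcal{R}_2$) already lies in $\mathcal{C}$ and is contained in $\mathcal{R}_1\cap\mathcal{R}_2$. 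Hence Lemma~\ref{lem;pyramidinjective} applies and $\cQ_0$ is itself a pyramid, so $\cQ_0\in\mathcal{S}$ and is an upper bound for $\mathcal{C}$ in $(\mathcal{S},\supset)$.

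Zorn's lemma then yields a maximal element $\cQ$ of $(\mathcal{S},\supset)$: a pyramid with $\cY\subset\cQ\subset\cP$ such that any pyramid $\mathcal{R}$ with $\cY\subset\mathcal{R}\subset\cQ$ must satisfy $\mathcal{R}=\cQ$, which is exactly the desired conclusion. The entire difficulty of the proposition is already absorbed into Lemma~\ref{lem;pyramidinjective}, whose nontrivial content is that intersections of pyramids need not be pyramids in general but become so when the family is downward directed; no additional obstacle remains at this stage.
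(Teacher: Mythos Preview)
Your proof is correct and follows essentially the same approach as the paper: define the set of pyramids between $\cY$ and $\cP$, use Lemma~\ref{lem;pyramidinjective} to show that every chain has a lower bound (the intersection is again a pyramid), and apply Zorn's lemma. The paper states this in one line by pointing to the proof of Proposition~\ref{prop;minimalX} with Lemma~\ref{lem;pyramidinjective} substituted for Lemma~\ref{lem;precinjective}, which is exactly what you have spelled out.
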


\begin{proof}
The proof is the same as that of Proposition~\ref{prop;minimalX} except for the use of Lemma~\ref{lem;pyramidinjective} instead of Lemma~\ref{lem;precinjective}.
\end{proof}

\begin{lem}\label{lem;order}
Let $N\in\N$.
Suppose that sequences $\{X_n\}_{n=1}^N$ of mm-spaces and
$\{\ep_n\}_{n=2}^N$ of nonnegative numbers satisfy $X_{n-1} \prec_{\ep_n} X_{n}$ for any $n\ge 2$.
Put $\ep:=\sum_{n=2}^{N} \ep_n \ge 0$.
Then $X_1 \prec_{5\ep} X_N$.
\end{lem}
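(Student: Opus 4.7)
I would prove the lemma by induction on $N$, with inductive statement $X_1\prec_{5\ep^{(n)}}X_n$ for $\ep^{(n)}:=\sum_{k=2}^n\ep_k$. The base case $n=1$ is immediate from $X_1\prec_0 X_1$ via the identity. For the inductive step, suppose $X_1\prec_{5\ep^{(N-1)}}X_{N-1}$ is witnessed by a Borel map $g:X_{N-1}\to X_1$ with nonexceptional domain $\tilde Y\subset X_{N-1}$, and let $f:X_N\to X_{N-1}$ with nonexceptional domain $\tilde X\subset X_N$ witness $X_{N-1}\prec_{\ep_N}X_N$. The natural candidate is the composition $F:=g\circ f:X_N\to X_1$ with nonexceptional domain contained in $\tilde X\cap f^{-1}(\tilde Y)$.

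The immediate difficulty is that $\proh(f_*\mu_{X_N},\mu_{X_{N-1}})\le\ep_N$ does not directly bound $f_*\mu_{X_N}(\tilde Y)$ from below: the Prohorov bound only controls $f_*\mu_{X_N}$ on enlargements of sets. To get around this I would enlarge the intermediate domain to $\tilde Y^*:=U_{\ep_N}(\tilde Y)$, for which the Prohorov inequality gives $f_*\mu_{X_N}(\tilde Y^*)\ge\mu_{X_{N-1}}(\tilde Y)-\ep_N\ge 1-5\ep^{(N-1)}-\ep_N$. I would then extend $g$ to a Borel map $\tilde g:X_{N-1}\to X_1$ that agrees with $g$ on $\tilde Y$ and sends each $y\in\tilde Y^*\setminus\tilde Y$ to $g(y^*)$ for a measurably chosen $y^*\in\tilde Y$ with $d_{X_{N-1}}(y,y^*)<\ep_N$; a direct estimate shows $\tilde g$ is almost $(1,5\ep^{(N-1)}+2\ep_N)$-Lipschitz on $\tilde Y^*$.

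With $\tilde F:=\tilde g\circ f:X_N\to X_1$ and nonexceptional domain $A:=\tilde X\cap f^{-1}(\tilde Y^*)$, the three conditions of Definition~\ref{def;precep} can then be checked: condition~(1) from $\mu_{X_N}(A)\ge 1-\ep_N-(1-f_*\mu_{X_N}(\tilde Y^*))$; condition~(2) by composing the almost-Lipschitz properties on $\tilde X$ and on $\tilde Y^*$, producing error $5\ep^{(N-1)}+3\ep_N$; and condition~(3) via the triangle inequality
\[
\proh(\tilde F_*\mu_{X_N},\mu_{X_1})\le \proh(\tilde g_* f_*\mu_{X_N},\tilde g_*\mu_{X_{N-1}})+\proh(\tilde g_*\mu_{X_{N-1}},g_*\mu_{X_{N-1}})+\proh(g_*\mu_{X_{N-1}},\mu_{X_1}),
\]
bounding the first summand by Lemma~\ref{lem;P}(2) applied to $\tilde g$ on $\tilde Y^*$ and the second by Lemma~\ref{lem;P}(1) using that $\tilde g=g$ on $\tilde Y$.

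The main obstacle is numerical: a naive application of Lemma~\ref{lem;P}(2) at each step amplifies the cumulative Prohorov bound by the Lipschitz constant on the extended domain, which threatens to produce a growing multiplicative constant rather than the additive $5\ep^{(N)}$ claimed. Overcoming this requires packaging the enlargement $\tilde Y^*=U_{\ep_N}(\tilde Y)$, the extension $\tilde g$, and the choice of $A$ so that every contribution at the $N$-th step is bounded by $5\ep_N$, allowing the inductive hypothesis $5\ep^{(N-1)}$ to be transported forward without amplification; the factor $5$ in the statement is precisely what is needed to absorb the $\ep_N$ term from the given $\prec_{\ep_N}$ relation, a $2\ep_N$ term from the Lipschitz extension of $g$, and a $2\ep_N$ term from the Prohorov correction in Lemma~\ref{lem;P}(2).
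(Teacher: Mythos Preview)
Your inductive strategy has a genuine gap that the last paragraph acknowledges but does not resolve. The difficulty is exactly where you locate it: in the Prohorov estimate. When you apply Lemma~\ref{lem;P}(2) to $\tilde g$ on the enlarged domain $\tilde Y^*$, the $\ep$ in that lemma must dominate the Lipschitz defect of $\tilde g$ on $\tilde Y^*$, and that defect is $5\ep^{(N-1)}+2\ep_N$, not $2\ep_N$. Hence the first summand in your triangle inequality is bounded only by $\ep_N+2(5\ep^{(N-1)}+2\ep_N)=10\ep^{(N-1)}+5\ep_N$, and together with the remaining summands (each already of order $5\ep^{(N-1)}$) the total Prohorov bound is of order $20\ep^{(N-1)}+5\ep_N$, not $5\ep^{(N)}$. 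The induction does not close; your accounting $\ep_N+2\ep_N+2\ep_N=5\ep_N$ simply omits the $5\ep^{(N-1)}$ sitting inside the Lipschitz defect that feeds into Lemma~\ref{lem;P}(2). No repackaging of $\tilde Y^*$, $\tilde g$, and $A$ fixes this, because any extension of $g$ over a neighbourhood of $\tilde Y$ inherits at least the defect $5\ep^{(N-1)}$ on that neighbourhood.

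The paper avoids this amplification by a different mechanism: it isometrically embeds all $X_n$ into $(\R^\infty,\|\cdot\|_\infty)$ and replaces each $f_n$ by a \emph{globally} $1$-Lipschitz map $g_n:\R^\infty\to\R^\infty$ obtained by coordinate-wise McShane extension. The composition $g=g_2\circ\dots\circ g_N$ is then genuinely $1$-Lipschitz on all of $\R^\infty$, so pushing forward by any tail $g_2\circ\dots\circ g_{n-1}$ does not increase Prohorov distance at all; the telescoping sum gives $\proh(\mu_{X_1},g_*\mu_{X_N})\le 2\ep$ with purely additive errors. Only at the very end is a single nearest-point projection $h:\R^\infty\to X_1$ applied, contributing one further factor and yielding the constant $5$. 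The key idea you are missing is this passage to exact $1$-Lipschitz maps on an ambient space, which decouples the Lipschitz control from the accumulated error and is precisely what turns a multiplicative estimate into an additive one.
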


\begin{proof}
By assumption,
there exist Borel maps $f_n :X_{n} \to X_{n-1}$ and Borel sets $\tX_{n}\subset X_{n}$ with
\begin{itemize}
\item $\mu_{X_{n}} (\tX_{n}) \ge 1-\ep_n$,
\item $d_{X_{n-1}} (f_n (x), f_n (x'))\le d_{X_{n}} (x, x)+\ep_n$ for any $x, x'\in\tX_{n}$, and
\item $\proh((f_n)_* \mu_{X_{n}}, \mu_{X_{n-1}})\le\ep_n$.
\end{itemize}

We may assume that $X_n$'s are isometrically embedded in $(\R^\infty, \|\cdot\|_\infty)$ by the Kuratowski embedding and
define functions $f_{n, i} :X_n \to\R$ and $g_{n, i} :\R^\infty \to\R$ by
\[
(f_{n, i} (x))_{i=1}^\infty = f_n (x) \quad\text{ and }\quad
g_{n, i} (x):= \inf_{\tX_n} [ f_{n, i} (\cdot)+d_{\R^\infty} (x, \cdot) ].
\]
Then $|f_{n, i} -g_{n, i}|\le\ep_n$ on $\tX_n$, cf.~\cite[Proof of Lemma~5.4]{S},
and $g_n =(g_{n, i}) :\R^\infty \to\R^\infty$ is $1$-Lipschitz.

Put $X:=X_N$, $Y:=X_1$, and $g:= g_2 \circ\dots\circ g_{N} :\R^\infty \to\R^\infty$.
By Lemma~\ref{lem;P},
\begin{align*}
\proh(\muy, g_* \mux)
&\le \sum_{n=2}^{N} \proh( (g_2 \circ\dots\circ g_{n-1})_* \mu_{X_{n-1}}, (g_2 \circ\dots\circ g_n)_*  \mu_{X_n} ) \\
&\le \sum_{n=2}^{N} \proh( \mu_{X_{n-1}}, (g_n)_* \mu_{X_n} ) \\
&\le \sum_{n=2}^{N} \proh( \mu_{X_{n-1}}, (f_n)_* \mu_{X_n} ) + \proh( (f_n)_* \mu_{X_n}, (g_n)_* \mu_{X_n} )
\le 2\ep.
\end{align*}

Put $\tY:=B_{2\ep}(Y)\subset\R^\infty$ and
take a Borel map $h:\R^\infty \to Y$ with
\[
d_{\R^\infty} (z, h(z)) < d_{\R^\infty} (z, Y)+\frac{\ep}{2}
\]
for any $z\in\R^\infty$ by e.g. \cite[Lemma~3.5]{S}.
This $h$ satisfies
\[
d_{\R^\infty} (h(z), z) <\frac{5}{2}\ep \quad\text{ and hence }\quad d_Y (h(z), h(z')) <d_{\R^\infty} (z, z') +5\ep
\]
for any $z, z' \in\tY$.

Put $f:=h\circ g:X\to Y$ and $\tX:=g^{-1}(\tY) \subset X$.
Then
\[
\mux(\tX)=g_* \mux (\tY) \ge \muy(Y)-2\ep =1-2\ep.
\]
If $x, x'\in\tX$, then $g(x), g(x')\in\tY$ and hence
\[
d_Y (f(x), f(x')) <d_{\R^\infty} (g(x), g(x')) +5\ep \le d_X (x, x') +5\ep.
\]
Finally, Lemma~\ref{lem;P} yields
\[
\proh(f_* \mux, \muy)
\le \proh(h_* g_* \mux, g_* \mux) +\proh(g_* \mux, \muy)
< 5\ep.
\]

Therefore we conclude that $Y\prec_{5\ep} X$.
\end{proof}

If $N=2$ in Lemma~\ref{lem;order},
we have the following with slightly smaller constants.
\begin{lem}
Let $X$, $Y$, and $Z$ be three mm-spaces and $\ep, \delta\ge 0$.
If $Z \prec_\delta Y$ and $Y \prec_\ep X$, then $Z \prec_{3\ep+4\delta} X$.
\end{lem}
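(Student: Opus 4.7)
The plan is to follow the strategy used in the proof of Lemma~\ref{lem;order}, but tailored to the two-step case and with the book-keeping optimized to yield the asymmetric constant $3\ep+4\delta$ rather than the $5(\ep+\delta)$ that one would obtain by blindly quoting Lemma~\ref{lem;order}. Fix Borel witnesses $f\colon X\to Y$ with nonexceptional domain $\tX\subset X$ for $Y\prec_\ep X$, and $g\colon Y\to Z$ with nonexceptional domain $\tY\subset Y$ for $Z\prec_\delta Y$, and isometrically embed $Z$ into $(\R^\infty,\|\cdot\|_\infty)$ via the Kuratowski embedding. First I would apply the McShane inf-convolution coordinatewise to produce a genuinely $1$-Lipschitz map $\tilde g\colon Y\to\R^\infty$ satisfying $\|\tilde g-g\|_\infty\le\delta$ on $\tY$; in particular $\tilde g(\tY)\subset B_\delta(Z)$ and, more generally, $\tilde g(U_\ep(\tY))\subset B_{\ep+\delta}(Z)$. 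Next I would pick a Borel near-projection $h\colon\R^\infty\to Z$ with slack $\delta$, that is, $d_{\R^\infty}(z,h(z))<d_{\R^\infty}(z,Z)+\delta$ for all $z$, and propose $F:=h\circ\tilde g\circ f\colon X\to Z$ as the candidate map with candidate nonexceptional domain $\tX':=\tX\cap f^{-1}(U_\ep(\tY))$.

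Then I would verify the three clauses of Definition~\ref{def;precep}. The mass bound $\mux(\tX')\ge 1-2\ep-\delta$ follows from applying $\proh(f_*\mux,\muy)\le\ep$ in the direction $\muy(\tY)\le f_*\mux(U_\ep(\tY))+\ep$, so that $\mux(f^{-1}(U_\ep(\tY)))\ge 1-\ep-\delta$. The Lipschitz estimate for $F$ on $\tX'$ decomposes into three contributions: the $\ep$ picked up from the approximate Lipschitz property of $f$ on $\tX$, plus two applications of $h$ each contributing at most $\ep+2\delta$ (since $\tilde g\circ f$ sends $\tX'$ into $B_{\ep+\delta}(Z)$ and the slack of $h$ is $\delta$), for a total Lipschitz error of $3\ep+4\delta$. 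For the Prohorov clause I would telescope $\proh(F_*\mux,\muz)$ through the intermediate measures $\tilde g_* f_*\mux$, $\tilde g_*\muy$, and $g_*\muy$, invoking Lemma~\ref{lem;P} together with the $1$-Lipschitzness of $\tilde g$ to obtain a total of at most $2\ep+4\delta$.

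The main obstacle is that the naive composition $g\circ f$ cannot be used directly: from the assumption one has no lower bound on $\mux(f^{-1}(\tY))$, only on $\mux(f^{-1}(U_\ep(\tY)))$, yet $g$ carries no approximate Lipschitz estimate outside $\tY$. This is precisely what forces the detour through the McShane extension $\tilde g$ (defined and genuinely $1$-Lipschitz on all of $Y$) and through the $\R^\infty$-retraction $h$. The delicate bookkeeping is calibrating the slack of $h$ to be $\delta$ rather than the $\ep/2$ used in Lemma~\ref{lem;order}; this asymmetric choice is exactly what converts the symmetric bound $5(\ep+\delta)$ into the target $3\ep+4\delta$ by matching the $\delta$ level of $\tilde g(\tY)$ instead of optimizing against $\ep$.
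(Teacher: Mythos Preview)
Your argument is correct and yields the stated bound $3\ep+4\delta$, but the paper proceeds along a simpler route. Instead of embedding $Z$ into $\R^\infty$, McShane-extending $g$ coordinatewise, and then retracting back onto $Z$, the paper stays entirely inside the given spaces: it picks a Borel near-projection $\pi\colon Y\to\tY$ with $d_Y(y,\pi(y))<d_Y(y,\tY)+\delta$ and takes $h:=g\circ\pi\circ f$ with nonexceptional domain $\hat X:=\tX\cap f^{-1}(B_\ep(\tY))$. Since $\pi\circ f$ lands in $\tY$ on $\hat X$, the original $g$ can be applied directly without any extension. This gives a Lipschitz error of $3\ep+3\delta$ (slightly sharper than your $3\ep+4\delta$) and the same mass defect $2\ep+\delta$ and Prohorov bound $2\ep+4\delta$. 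Your approach, modeled on the proof of Lemma~\ref{lem;order}, is perfectly valid and has the merit of being a uniform template; the paper's approach trades that uniformity for a shorter argument that avoids the ambient $\R^\infty$ altogether.
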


\begin{proof}
By assumption,
there exist Borel maps $f:X\to Y$ and $g:Y\to Z$ and Borel sets $\tX \subset X$ and $\tY \subset Y$ with $\mux(\tX) \ge 1-\ep$ and $\muy(\tY) \ge 1-\delta$ such that
\[
\sup_{x, x' \in \tX} (d_Y(f(x), f(x'))-d_X(x, x')) \le \ep, \quad \sup_{y, y' \in \tY} (d_Z(g(y), g(y'))-d_Y(y, y')) \le \delta
\]
and
\[
\proh(f_* \mux, \muy) \le \ep, \quad \proh(g_* \muy, \muz) \le \delta.
\]

The set $\hat{X} := \tX \cap f^{-1}(B_\ep(\tY)) \subset X$ has the mass
\[
\mux(\hat{X}) \ge \mux(\tX) +f_* \mux(B_\ep(\tY))-1 \ge \muy(\tY)-2\ep \ge 1-2\ep-\delta.
\]
Define a map $h:X\to Z$ by $h = g\circ \pi \circ f$, where $\pi: Y\to\tY$ is a Borel map with
\[
d_Y (y, \pi(y)) <d_Y (y, \tY)+\delta
\]
for any $y\in Y$, e.g.~\cite[Lemma~3.5]{S}.
Then, for any $x, x' \in \hat{X}$, we have
\begin{align*}
d_Z(h(x),h(x'))
&\le d_Y(\pi\circ f(x), \pi\circ f(x')) + \delta \\
&< d_Y(f(x), f(x')) +2\ep+3\delta\le d_X(x, x') +3\ep+3\delta.
\end{align*}

Moreover, by Lemma~\ref{lem;P} and $f_* \mux(B_\ep (\tY)) \ge \muy(\tY)-\ep \ge 1-(\ep+\delta)$, we have
\begin{align*}
\proh(h_*\mux, \muz)
&\le \proh(h_*\mux, g_*\muy) + \delta \\
&\le \proh(\pi_* f_*\mux, \muy) + 3\delta \\
&\le \proh(\pi_* f_*\mux, f_*\mux) + \ep + 3\delta
\le 2\ep+4\delta.
\end{align*}
These together imply $Z \prec_{3\ep+4\delta} X$. The proof is completed.
\end{proof}

The following is a slight extension of the statements in \cite{G} which deal with the case $\ep_n =0$.
Case~(2) is an analog of Corollary~\ref{cor;X}.
\begin{prop}[cf. Gromov~{\cite[3$\frac{1}{2}$.15]{G}}]\label{prop;epincrease}
Let $\seqn{X_n}$ be a sequence of mm-spaces
and $\seqn{\ep_n}$ a sequence of nonnegative numbers with $\sum_{n=1}^\infty \ep_n <\infty$.
Suppose that either
\begin{enumerate}
\item $X_n \prec_{\ep_n} X_{n+1}$ for any $n$ and $\seqn{X_n}$ is $\Box$-precompact, or
\item $X_{n+1} \prec_{\ep_n} X_n$ for any $n$.
\end{enumerate}
Then there exists an mm-space $Y$ with $\Box(X_n, Y) \to 0$ as $\nti$.
\end{prop}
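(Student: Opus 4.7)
The plan is to reduce both cases to a common framework via Lemma~\ref{lem;order}, and then use an antisymmetry argument based on Lemma~\ref{lem;precep} to show that all subsequential limits of $\seqn{X_n}$ coincide.

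First, I set $\eta_n := 5\sum_{i=n}^\infty \ep_i$, so that $\eta_n \to 0$ as $\nti$. Iteratively applying Lemma~\ref{lem;order} to the given chain, in case~(1) I obtain $X_n \prec_{\eta_n} X_m$ for all $m > n$; the same lemma applied after reversing the indexing in case~(2) yields $X_m \prec_{\eta_n} X_n$ for all $m > n$.

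In case~(2) the $\Box$-precompactness of $\seqn{X_n}$ is not part of the hypothesis, so I would establish it first. For each fixed $N\in\N$, Lemma~\ref{lem;K} gives for every $m\ge N$ an mm-space $Z^N_m \prec X_N$ with $\Box(X_m, Z^N_m) \le 3\eta_N$; that is, the tail $\{X_m\}_{m\ge N}$ lies within $\Box$-distance $3\eta_N$ of the pyramid $\cP_{X_N}$, which is $\Box$-compact (as used in the proof of Lemma~\ref{lem;precinjective} via Lemma~\ref{lem;precpt}). Since $\eta_N$ can be made arbitrarily small, $\seqn{X_n}$ is totally bounded in $(\cX, \Box)$, and the completeness of $(\cX, \Box)$ then yields its $\Box$-precompactness.

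With precompactness in hand in both cases, I conclude via antisymmetry of the partial order $\prec$. Suppose $\{X_{n_k}\}$ and $\{X_{m_l}\}$ are two subsequences converging to mm-spaces $Y$ and $Y'$ respectively. For each $k$ I would choose $l(k)$ with $m_{l(k)} > n_k$ and $l(k) \to \infty$; in case~(1) this gives $X_{n_k} \prec_{\eta_{n_k}} X_{m_{l(k)}}$, so Lemma~\ref{lem;precep} yields $Y \prec Y'$. Interchanging the roles of the two subsequences in the same way gives $Y' \prec Y$, and since $\prec$ is a partial order on $\cX$ (Remark~\ref{rem;prec}), $Y = Y'$. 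In case~(2) the analogous construction with the direction of $\prec$ reversed likewise gives $Y' \prec Y$ and $Y \prec Y'$, whence $Y = Y'$. Thus $\seqn{X_n}$ admits a unique subsequential limit $Y$, and by precompactness $\Box(X_n, Y) \to 0$ as $\nti$.

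The main obstacle I anticipate is the precompactness step in case~(2), which hinges on two standard facts about $(\cX, \Box)$ that are not isolated as separate lemmas in the excerpt: the compactness of the pyramid $\cP_{X_N}$ and the completeness of the box metric.
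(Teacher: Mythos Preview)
Your argument is correct and follows the same overall architecture as the paper: both use Lemma~\ref{lem;order} to telescope the chain, establish $\Box$-precompactness, extract a convergent subsequence, and then invoke Lemma~\ref{lem;precep} together with the antisymmetry of $\prec$ to show that all subsequential limits coincide.

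The only genuine difference is in the precompactness step for Case~(2). The paper verifies the criterion of Lemma~\ref{lem;precpt} directly: it takes a finite covering $\cK$ of $X_N$ by sets of small diameter and large total mass, and pushes it forward along the almost-Lipschitz maps $f_n:X_N\to X_n$ to obtain coverings $\cK_n$ of $X_n$ with uniformly controlled cardinality, diameter and mass defect. Your route is more structural: via Lemma~\ref{lem;K} you place each $X_m$ within $\Box$-distance $3\eta_N$ of $\cP_{X_N}$, and then appeal to the compactness of $\cP_{X_N}$ (used in the proof of Lemma~\ref{lem;precinjective}) and the completeness of $(\cX,\Box)$ to conclude total boundedness. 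Both approaches rest on the same underlying facts; the paper's is more self-contained, while yours is shorter and makes the role of $\cP_{X_N}$ explicit. The two external ingredients you flag are indeed available: compactness of $\cP_{X_N}$ is asserted in the proof of Lemma~\ref{lem;precinjective}, and completeness of $(\cX,\Box)$ is part of \cite[Theorem~4.14]{S}.
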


\begin{proof}
Since the proof for Case (1) is similar, we only discuss Case (2).

Let $\ep>0$, fix $N\in\N$ with $5\sum_{n\ge N} \ep_n <\ep$, and put $X:=X_N$.
In Case (2),
Lemma~\ref{lem;order} implies that $X_n \prec_{\ep} X$ for any $n>N$ and hence
there exist a Borel map $f_n :X\to X_n$ and a Borel set $\tX_n \subset X$ with $\mux(\tX_n)\ge 1-\ep$,
\[
d_{X_n} (f_n (x), f_n (x')) \le d_{X_N} (x, x')+\ep
\]
for any $x, x' \in\tX_n$, and $\proh((f_n)_* \mux, \mu_{X_n})\le\ep$.

Take a finite set $\cK$ of Borel sets of $X$ with
\[
\max_{K\in\cK} \diam K \le\ep \quad\text{ and }\quad
\mux(\bigcup\cK)\ge 1-\ep.
\]

Put $\cK_n :=\{ B_\ep (f_n (K\cap\tX_n)) : K\in\cK \}$
and $U_n :=\bigcup_{K\in\cK} f_n (K\cap\tX_n) \subset X_n$ for $n>N$.
Then $\cK_n$ is a finite set of Borel sets in $X_n$ with
\[
\#\cK_n \le\#\cK, \quad
\max_{K\in\cK_n} \diam K \le 4\ep, \quad
\diam \bigcup\cK_n \le \diam \bigcup\cK+3\ep,
\]
and
\[
\mu_{X_n} (\bigcup\cK_n)
=\mu_{X_n} (B_\ep (U_n))
\ge (f_n)_* \mux(U_n) -\ep
\ge \mux(\bigcup\cK\cap\tX_n) -\ep
\ge  1-3\ep.
\]

Now we deduce from Lemma~\ref{lem;precpt} that $\seqn{X_n}$ is $\Box$-precompact.
Then there exist an increasing sequence $\seqk{n(k)}$ and an mm-space $Y$ with $\Box(X_{n(k)}, Y)\to 0$ as $k\to\infty$.
Lemmas~\ref{lem;order} and \ref{lem;precep} imply that any $\Box$-converging subsequence of $\seqn{X_n}$ $\Box$-converges to $Y$.
This means that $\seqn{X_n}$ $\Box$-converges to $Y$.
\end{proof}

\section{Proofs of Proposition~\ref{prop;embed} and Corollary~\ref{cor;pmG}}\label{sec;embed}
In this section, we prove Proposition~\ref{prop;embed} and Corollary~\ref{cor;pmG}.

\begin{defn}[\cite{GMS}]\label{def;pmG}
By a \emph{pointed mm-space} or a \emph{pmm-space} for short, we mean a pair $(X, x)$ of an mm-space $X$ and a point $x\in\supp\mux$.

We say that a sequence $\seqn{(X_n, x_n)}$ of pmm-spaces \emph{pmG}-converges to a pmm-space $(Y, y)$
if there exist a complete separable metric space $(Z, d_Z)$ and isometric embeddings $f_n :\supp\mu_{X_n} \to Z$ and $f:\supp\muy\to Z$ for which
$\seqn{(f_n)_* \mu_{X_n}}$ converges weakly to $f_* \muy$ and $d_Z (f_n (x_n), f(y))\to 0$ as $\nti$.
\end{defn}

We note that the pmG-convergence makes sense even if the measures are not probability measures.

\begin{lem}\label{lem;diam}
If $\cY\subset\cX$ is a nonempty $\Box$-precompact set, then
\[
\sup_{Y, Z\in\cY} \Box(Y, Z)<1.
\]
\end{lem}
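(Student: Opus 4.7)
The plan is to bound $\Box(Y,Z)$ uniformly over $Y,Z\in\cY$ by exploiting the $\Box$-precompactness through Lemma~\ref{lem;precpt} to produce, inside each $Y\in\cY$, a Borel set of small diameter that carries a definite (uniformly bounded below) amount of mass, and then to build parameters that place these sets at the beginning of the interval $I=[0,1]$.

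First, I would fix any $\ep\in(0,1)$ (say $\ep=1/2$ for concreteness) and invoke Lemma~\ref{lem;precpt} to obtain $\Delta=\Delta(\ep)<\infty$ and, for each $Y\in\cY$, a collection $\cK_Y$ of Borel subsets of $Y$ with $\#\cK_Y\le\Delta$, $\max_{K\in\cK_Y}\diam K\le\ep$, and $\muy(\bigcup\cK_Y)\ge 1-\ep$. By the pigeonhole principle, some $K_Y\in\cK_Y$ satisfies
\[
\muy(K_Y)\ \ge\ \frac{1-\ep}{\Delta}\ =:\ c\ >\ 0,
\]
and this lower bound is uniform over $\cY$.

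Second, given $Y,Z\in\cY$, I would choose parameters $\phi:I\to Y$ and $\psi:I\to Z$ that concentrate $K_Y$ and $K_Z$ on initial segments of $I$. Concretely, setting $a:=\muy(K_Y)$ and writing $\muy|_{K_Y}/a$ and $\muy|_{Y\setminus K_Y}/(1-a)$ as pushforwards of $\cL|_{[0,1]}$ under Borel maps $\phi_1:[0,1]\to K_Y$ and $\phi_2:[0,1]\to Y\setminus K_Y$ (via \cite[Lemma~4.2]{S}), I rescale and concatenate them so that $\phi([0,a])\subset K_Y$ with $\phi_*(\cL|_{[0,a]})=\muy|_{K_Y}$; define $\psi$ analogously with $b:=\muz(K_Z)$. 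Taking $I':=[0,\min(a,b)]$, we have $\cL(I')\ge c$, and for any $s,t\in I'$ both $\phi(s),\phi(t)\in K_Y$ and $\psi(s),\psi(t)\in K_Z$, so
\[
|\,d_Y(\phi(s),\phi(t))-d_Z(\psi(s),\psi(t))\,|\ \le\ \ep.
\]
By the definition of the box distance, this yields $\Box(Y,Z)\le\max(1-c,\,\ep)$, and the right-hand side is strictly less than $1$ and independent of the pair $(Y,Z)\in\cY\times\cY$.

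I do not foresee a genuine obstacle here. The mildly delicate point is the construction of the parameters with the prescribed concentration, but this reduces to applying the standard existence of parameters separately on the sub-mm-spaces with measures $\muy|_{K_Y}/a$ and $\muy|_{Y\setminus K_Y}/(1-a)$ (with the obvious adjustment when $a=1$) and then rescaling and concatenating them on the two subintervals $[0,a]$ and $[a,1]$ of $I$.
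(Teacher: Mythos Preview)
Your argument is correct and gives an explicit uniform bound $\Box(Y,Z)\le 1-(1-\ep)/\Delta(\ep)$ directly from the precompactness characterization of Lemma~\ref{lem;precpt}. The paper proceeds differently: it first uses compactness of $\overline{\cY}$ to realise the supremum at a pair $Y,Z\in\overline{\cY}$, then takes any common dominator $X$ with $Y,Z\prec X$ (e.g.\ the product, cf.~Remark~\ref{rem;prec}) and applies Remark~\ref{rem;f*X} to the $1$-Lipschitz pushforwards $f_*X=Y$, $g_*X=Z$ with a Borel set $A\subset X$ of diameter $<1$ and positive measure, obtaining $\Box(Y,Z)\le\max\{\diam A,\,1-\mu_X(A)\}<1$. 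Your route avoids the Lipschitz order and the explicit construction of a dominator, trading them for a direct parameter construction via pigeonhole; the paper's route avoids building parameters by hand but relies on attaining the supremum and on Remark~\ref{rem;f*X}. Both are short; yours is slightly more self-contained and quantitative, while the paper's fits more naturally with the ambient machinery of the article.
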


\begin{proof}
Since $\cY\subset\cX$ is nonempty and $\Box$-precompact,
there exist mm-spaces $Y, Z\in\overline{\cY}$ with $\Box(Y, Z)=\sup_{Y, Z\in\cY} \Box(Y, Z)$.
Let $X$ be an mm-space with $Y, Z\prec X$ and
let $f:X\to Y$ and $g:X\to Z$ be $1$-Lipschitz maps with $f_* \mux=\muy$ and $g_* \mux=\muz$ respectively.
Take a Borel set $A \subset X$ such that $\diam{A}<1$ and $\mux(A)>0$.
Then, by Remark \ref{rem;f*X},
\[
\Box(Y, Z) = \Box(f_* X, g_* X) \le \max\{\diam{A}, 1-\mux(A)\} < 1.
\]
The proof is completed.
\end{proof}

\begin{proof}[Proof of Proposition~\ref{prop;embed}]
The implication (2)$\implies$(1) follows from Lemma~\ref{lem;BP}.

To prove the implication (1)$\implies$(2),
we mimic the proof of the Union lemma, e.g.~\cite[Lemma~4.13]{S}.
Let $\cY\subset\cX$ be a $\Box$-precompact subset
and $\seqn{\ep_n}$ a sequence of positive numbers with $\sum_{n=1}^\infty \ep_n < +\infty$ and $\ep_1=1$.
Put $I_1:=\{1\}$ and $\cY_{1,1} := \cY$.
We know that $\diam\cY_{1, 1}<\ep_1$ by Lemma~\ref{lem;diam}. For $n\ge 2$,
we consider a decomposition $\cY=\bigsqcup_{i\in I_n} \cY_{n, i}$ with $\# I_n <\infty$ and
\[
\diam\cY_{n, i} <\ep_n \quad\text{ and }\quad
\cY_{n+1, j} \cap \cY_{n, i}=\emptyset \text{ or } \cY_{n+1, j}
\]
for any $i\in I_n$ and $j\in I_{n+1}$.
Put $I:= \{(n, i) : n\in\N, i\in I_n\}$.

Choose $Y_{n, i} \in\cY_{n, i}$ for each $(n, i)\in I$ so that $Y_{n+1, j} =Y_{n, i}$ if $Y_{n,i} \in \cY_{n+1, j}$.
Let $J:=[0, 1]\subset\R$.
If $\cY_{n+1, j} \subset\cY_{n, i}$,
then $\Box(Y_{n, i}, Y_{n+1, j}) <\ep_n$
and hence there exist parameters $\phi:=\phi_{n, i, j} :J\to Y_{n, i}$ and $\psi:=\psi_{n, i, j} :J\to Y_{n+1, j}$
and a Borel set $J':=J_{n, i, j} \subset J$ with $\cL(J')>1-\ep_n$ and
\[
|d_{Y_{n, i}} (\phi(s), \phi(t)) - d_{Y_{n+1, j}} (\psi(s), \psi(t))| <\ep_n
\]
for any $s, t\in J'$.

Let $Z:= \bigsqcup_{(n, i)\in I} Y_{n, i}$.
For $z, z' \in Z$, we define $d_Z (z, z')$ as follows:

If $z, z' \in Y_{n, i}$ for some $(n, i)\in I$, then $d_Z (z, z'):= d_{Y_{n, i}} (z, z')$.

If $z\in Y_{n, i}$ and $z' \in Y_{n+1, j}$ for some $(n, i)\in I$ and $j\in I_{n+1}$ with $\cY_{n+1, j} \subset\cY_{n, i}$, then
\[
d_Z (z, z'):=\inf_{s\in J_{n, i, j}} [d_{Y_{n, i}} (z, \phi_{n, i, j}(s)) +d_{Y_{n+1, j}} (\psi_{n, i, j}(s), z') ] +\ep_n.
\]

If $z\in Y_{m, i}$ and $z' \in Y_{n, j}$ for some $(m, i), (n, j)\in I$ with
\[
\cY_{m, i} \subset\cY_{m-1, i(m-1)} \subset\dots\subset\cY_{l, k}, \qquad
\cY_{n, j} \subset\cY_{n-1, j(n-1)} \subset\dots\subset\cY_{l, k},
\]
and $\cY_{l+1, i(l+1)} \cap \cY_{l+1, j(l+1)} =\emptyset$, then
\[
d_Z (z, z'):= \inf[ d_Z (z, z_{m-1}) +\dots+ d_Z (z_{l+1}, z_l) + d_Z (z_l, z'_{l+1}) +\dots+ d_Z (z'_{n-1}, z')  ],
\]
where the infimum is taken for all
\[
(z_{m-1}, \dots, z_l, z'_{l+1}, \dots, z'_{n-1}) \in
Y_{m-1, i(m-1)}\times\dots\times Y_{l, k} \times Y_{l+1, j(l+1)} \times\dots\times Y_{n-1, j(n-1)}.
\]
Then $(Z, d_Z)$ is a separable metric space
and we replace $(Z, d_Z)$ with its metric completion if necessary.

By construction,
there exists an isometric embedding $f_{n, i} :Y_{n, i}\to Z$
and put $\mu_{n, i} := (f_{n, i})_* \mu_{Y_{n, i}} \in\cP(Z)$ for each $(n, i)\in I$.
If $\cY_{n+1, j} \subset\cY_{n, i}$, we have
\begin{align}\label{ineq;BP}
\proh(\mu_{n, i}, \mu_{n+1, j}) < 2\ep_n,
\end{align}
cf.~\cite[Proof of Lemma~4.13]{S}.

Let $Y\in\cY$.
Then there exists a unique sequence $\seqn{i(n)}$ with $i(n)\in I_n$ and $Y\in\cY_{n, i(n)}$ for any $n$.
Hence $\Box(Y_{n, i(n)}, Y)\to 0$ as $\nti$.
Inequality~\eqref{ineq;BP} implies that $\seqn{\mu_n}$ with $\mu_n :=\mu_{n, i(n)}$ is Cauchy and hence converges weakly to some $\mu\in\cP(Z)$.
Since $Y_{n, i(n)}$ and $(Z, \mu_n)$ are mm-isomorphic and
\[
\Box((Z, \mu_n), (Z, \mu)) \le 2\proh(\mu_n, \mu)\to 0
\]
as $\nti$ by Lemma~\ref{lem;BP},
$Y$ and $(Z, \mu)$ are mm-isomorphic and hence there exists an isometric embedding $f_Y: Y\to Z$ with $(f_Y)_* \muy=\mu$.

If $\seqk{\mu_k}$ is a sequence in $\cM':=\{ \mu_{n, i} : (n, i)\in I \}$,
then $\mu_k = \mu_{n(k), i(k)}$ for some $(n(k), i(k))\in I$
and there exists a subsequence which is still denoted by $\seqk{\mu_k}$ with
\[
\cY_{n(k+1), i(k+1)} \subset\cY_{n(k), i(k)} \quad\text{ and hence }\quad
\sum_{k=1}^\infty \proh(\mu_k, \mu_{k+1})<2\sum_{k=1}^\infty \ep_k <\infty
\]
by Inequality~\eqref{ineq;BP}.
Thus $\seqk{\mu_k}$ is Cauchy and hence converges. This means that $\cM'$ is precompact in $\cP(Z)$.
Since $\cM:=\{ (f_Y)_* \muy : Y\in\cY \}\subset\cP(Z)$ is contained in the closure of $\cM'$,
it is also precompact. Therefore Prohorov's theorem states that $\cM$ is tight and
the proof of (1)$\implies$(2) in Proposition~\ref{prop;embed} is completed.
\end{proof}

We use the following to prove Corollary~\ref{cor;pmG}.
\begin{lem}\label{lem;countableembed}
Let $\cY\subset\cX$ be a countable $\Box$-precompact subset.
Then there exists a complete separable metric space $Z$ for which
any $Y\in\cY$ admits an isometric embedding $f_Y :Y\to Z$
so that the map $\cY\ni Y\mapsto (f_Y)_* \muy \in\cP(Z)$ is a homeomorphism.
\end{lem}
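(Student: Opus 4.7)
The plan is to apply Proposition~\ref{prop;embed} to $\cY$ to produce a candidate space and family of embeddings, and then to refine the construction using the countability of $\cY$ so that the resulting map becomes a homeomorphism.

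By Proposition~\ref{prop;embed}, there exist a complete separable metric space $Z$ and isometric embeddings $f_Y:\supp\muy\to Z$ for each $Y\in\cY$ such that $\cM:=\{(f_Y)_*\muy:Y\in\cY\}$ is tight in $\cP(Z)$. Define $\Phi:(\cY,\Box)\to(\cM,\proh)$ by $\Phi(Y):=(f_Y)_*\muy$. The map $\Phi$ is a bijection: surjectivity is by definition, and injectivity follows because $\Phi(Y)=\Phi(Y')$ implies that the mm-spaces $(Z,\Phi(Y))$ and $(Z,\Phi(Y'))$ coincide and each is mm-isomorphic to $Y$ and $Y'$, so $Y=Y'$ in $\cX$. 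Continuity of $\Phi^{-1}$ is immediate from Lemma~\ref{lem;BP}, which yields $\Box((Z,\mu),(Z,\nu))\le 2\proh(\mu,\nu)$.

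The main task is to establish continuity of $\Phi$. I would revisit the construction in the proof of Proposition~\ref{prop;embed}, where $Z$ is built as a disjoint union of representatives $Y_{n,i}$ of nested partitions $\cY=\bigsqcup_{i\in I_n}\cY_{n,i}$ of $\Box$-diameter less than $\ep_n$, with $\sum_n\ep_n<\infty$. Enumerating $\cY=\{Y_k\}_{k=1}^\infty$, I would refine the construction at level $n$ by declaring small $\Box$-balls $B_k:=\{Y\in\cY:\Box(Y,Y_k)<r_{n,k}\}$ around each of $Y_1,\dots,Y_n$ to be cells, choosing the radii $r_{n,k}$ small enough to make the $B_k$'s pairwise disjoint, contained in their respective parent cells from level $n-1$, and of diameter less than $\ep_n$; then partition the remainder $\cY\setminus\bigcup_{k\le n}B_k$ into finitely many cells of diameter less than $\ep_n$ using the $\Box$-precompactness of $\cY$. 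With this scheme, each $Y_k$ is the representative of its cell $B_k$ at all levels $n\ge k$, and $Y_k$ lies in the $\Box$-interior of that cell.

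With this refinement, if $Y^{(j)}\to Y$ in $\Box$ within $\cY$ with $Y=Y_k$, then for each fixed $n\ge k$ the sequence $Y^{(j)}$ eventually lies in $B_k$ at level $n$, so $Y^{(j)}$ and $Y$ share the same cell at level $n$. By inequality~\eqref{ineq;BP} from Proposition~\ref{prop;embed}'s proof and the triangle inequality through the common representative's measure, $\proh(\Phi(Y^{(j)}),\Phi(Y))\le 4\sum_{m\ge n}\ep_m$, which tends to $0$ as $n\to\infty$, establishing the continuity of $\Phi$. The main obstacle I anticipate is carrying out the diagonal refinement of the partitions while simultaneously preserving nestedness, small $\Box$-diameter, and local openness around each enumerated $Y_k$; this is the step where the countability of $\cY$, as opposed to mere $\Box$-precompactness, is essential.
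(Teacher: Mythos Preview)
Your strategy is correct and matches the paper's: refine the nested partitions from the proof of Proposition~\ref{prop;embed} so that each $Y\in\cY$ lies in the $\Box$-interior of its cell at all sufficiently high levels, then use inequality~\eqref{ineq;BP} to deduce continuity of $\Phi$. The bijectivity and the continuity of $\Phi^{-1}$ are handled exactly as you describe.

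The paper resolves the obstacle you anticipate with a single observation that replaces your diagonal scheme entirely: since $(\cY,\Box)$ is a \emph{countable} metric space, one may choose every cell $\cY_{n,i}$ with $\partial\cY_{n,i}=\emptyset$, i.e.\ clopen in $\cY$. (For any center $p$ only countably many radii $r$ satisfy $\Box(Y,p)=r$ for some $Y\in\cY$, so one can always pick radii making the spheres empty; building the partition from such balls yields clopen cells.) With all cells clopen, nestedness is preserved automatically when refining, and \emph{every} point---not just the first $n$ enumerated ones---lies in the interior of its cell at every level. Then if $Y^{(j)}\to Y$ in $\Box$, openness of $\cY_{n,i(n)}\ni Y$ forces $Y^{(j)}\in\cY_{n,i(n)}$ for large $j$, and the estimate $\proh(\Phi(Y^{(j)}),\Phi(Y))\le 4\sum_{l\ge n}\ep_l$ follows exactly as you wrote. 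Your plan would work too, but the nestedness difficulty you flag (fitting the new ball $B_n$ around $Y_n$ inside a possibly non-open remainder cell from level $n-1$) is genuine, and fixing it ultimately requires the same clopen-cell trick---so it is cleaner to invoke it from the start.
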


\begin{proof}
We use the notations in our proof of Proposition~\ref{prop;embed}.
Since $(\cY, \Box)$ is a countable metric space,
we can take each $\cY_{n, i} \subset\cY$ with $\partial\cY_{n, i} = \emptyset$ so that $\cY_{n, i}$ is an open set in our proof of Proposition~\ref{prop;embed}.

Let $\seqk{Y_k}$ be a sequence in $\cY$.
We shall show that $\seqk{Y_k}$ $\Box$-converges to $Y\in\cY$ if and only if $\seqk{(f_{Y_k})_* \mu_{Y_k}}$ converges weakly to $(f_Y)_* \muy$.
The ``if" part follows from Lemma~\ref{lem;BP}.

If $\seqk{Y_k}$ $\Box$-converges to $Y\in\cY$,
there exist uniquely determined sequences $\seqn{i(k, n)}$ and $\seqn{i(n)}$ with $i(k, n), i(n)\in I_n$, $Y_k \in\cY_{n, i(k, n)}$, and $Y\in\cY_{n, i(n)}$ for any $k, n$.
Then $\seqn{\mu_{n, i(k, n)}}$ and $\seqn{\mu_{n, i(n)}}$ converge weakly to $(f_{Y_k})_* \mu_{Y_k}$ and $(f_Y)_* \muy$ respectively by construction in our proof of Proposition~\ref{prop;embed}.
For each $n$, since $\cY_{n, i(n)}$ is open, we have $Y_k\in\cY_{n, i(n)}$ and hence $i(k, n)=i(n)$ for any sufficiently large $k$.
Thus, by Inequality~\eqref{ineq;BP},
\begin{align*}
\proh((f_{Y_k})_* \mu_{Y_k}, (f_Y)_* \muy)
&=\lim_{l\to\infty} \proh(\mu_{l, i(k, l)}, \mu_{l, i(l)}) \\
&\le \sum_{l=n}^\infty [ \proh(\mu_{l, i(k, l)}, \mu_{l+1, i(k, l+1)}) + \proh(\mu_{l, i(l)}, \mu_{l+1, i(l+1)}) ]
\le 4\sum_{l=n}^\infty \ep_l,
\end{align*}
which as $n\to\infty$ implies
\[
\lim_{k\to\infty}\proh((f_{Y_k})_* \mu_{Y_k}, (f_Y)_* \muy)=0.
\]
Therefore we obtain the conclusion.
\end{proof}

\begin{proof}[Proof of Corollary~\ref{cor;pmG}]
The implication (2)$\implies$(1) follows from Lemma~\ref{lem;BP} and the implication (3)$\implies$(2) is trivial.

To prove the implication (1)$\implies$(3),
we assume that $\seqn{X_n}$ $\Box$-converges to $X$. Then, by Lemma~\ref{lem;countableembed}, there exist a complete separable metric space $(Z, d_Z)$ and isometric embeddings $f_n:X_n\to Z$ and $f:X\to Z$ for which $\seqn{(f_n)_* \mu_{X_n}}$ converges weakly to $f_*\mux$.
Moreover, given a point $x\in\supp\mux$, there exist points $x_n \in\supp\mu_{X_n}$ with $d_Z (f_n (x_n), f(x))\to 0$,
because
\[
\liminf_\nti \mu_{X_n} (f_n^{-1}(O)) \ge f_* \mux(O) =\mux(f^{-1}(O)) >0
\]
for any open set $O\subset Z$ with $f(x)\in O$ by the Portmanteau theorem.
Then $\seqn{(X_n, x_n)}$ pmG-converges to $(X, x)$. This completes the proof.
\end{proof}

\subsection*{Acknowledgements}
The first author was partly supported by JSPS KAKENHI (No.20J00147)
and the second author was partly supported by JSPS KAKENHI (No.18K03298).

\end{document}